\newtheoremstyle{mythm}{1.5ex plus 1ex minus .2ex}{1.5ex plus 1ex
minus .2ex}{\kai}{\parindent}{\song\bfseries}{}{1em}{}
\numberwithin{equation}{section}
\newtheorem{thm}{Theorem}[section]
\newtheorem{lemma}{Lemma}[section]
\newtheorem{proposition}{Proposition}[section]
\newtheorem{remark}{Remark} [section]
\newcommand\leqs{\leqslant}
\newcommand\geqs{\geqslant}
\newcommand\g{\mathfrak{g}}
\title{Weighted Eigenvalue Problem for a Class of Singular/Degenerate $k$-Hessian Equations}
\author{Rongxun He and Genggeng Huang}
\begin{document}
\date{}
\maketitle

\begin{abstract}
In this paper, we study the existence and uniqueness of solutions to the weighted eigenvalue problem for $k$-Hessian equation. To achieve this, we establish the uniform a priori estimates for gradient and second derivatives of solutions to Hessian equation with weight $|x|^{2sk}$ on the right-hand-side. We also prove that the eigenfunction is a minimizer of the corresponding functional among all $k$-admissible functions vanishing on the boundary.
\end{abstract}

\section{Introduction}
In this paper, we study a class of $k$-Hessian equations in the following form
\begin{align}
S_k(D^2u)=|x|^{2sk}g(x,u)\quad\text{in }\Omega,
\label{formu01}
\end{align}
where $0\in\Omega\subset\mathbb{R}^n$ is an open bounded domain. $S_k$ is defined by
\begin{align*}
S_k(D^2u)=\sigma_k(\lambda(D^2u)),
\end{align*}
where $\lambda(D^2u)$ are the eigenvalues of Hessian matrix $D^2u$, and $\sigma_k(\lambda)$ denotes the $k$-th elementary symmetric polynomial given by
\begin{align*}
\sigma_k(\lambda)=\sum_{i_1<\cdots<i_k}\lambda_{i_1}\cdots\lambda_{i_k}.
\end{align*}
The Poisson equation and Monge-Ampère equation fall into the form of \eqref{formu01}, respectively, as $k=1$ and $k=n$. Following \cite{CNS85Dirichlet}, a function $u\in C^2(\Omega)\cap C^0(\overline{\Omega})$ is called $k$-admissible if $\lambda(D^2u)$ lies in $\overline{\Gamma}_k$ for all $x\in\Omega$, where $\Gamma_k$ is the symmetric G\aa rding cone given by
\begin{align*}
\Gamma_k=\{\lambda\in\mathbb{R}^n:\sigma_j(\lambda)>0,\ j=1,\ldots,k\}.
\end{align*}
We denote by $\Phi^k(\Omega)$ the set of all $k$-admissible functions in $\Omega$ and by $\Phi_0^k(\Omega)$ the set of all $k$-admissible functions vanishing on the boundary $\partial\Omega$. In addition, a bounded domain $\Omega$ of class $C^2$ is called strictly $(k-1)$-convex, if the boundary $\partial\Omega$ satisfies
\begin{align*}
(\kappa_1(x),\ldots,\kappa_{n-1}(x),K)\in\Gamma_k
\end{align*}
everywhere with some positive constant $K$, where $\kappa_1(x),\cdots,\kappa_{n-1}(x)$ denote the principal curvatures of $\partial\Omega$ at $x$ with respect to inner normal. Obviously, for the case $k=n$, it is equivalent to the usual (strict) convexity.

We first give some known results related to the $k$-Hessian equation \eqref{formu01} for the case $s=0$. The existence of smooth solutions to the Dirichlet problem
\begin{equation}
\left\{
\begin{array}{ll}
 S_k(D^2u)=g  & \text{in }\Omega,  \\
 u=\varphi  & \text{on }\partial\Omega,
\end{array}
\right.
\label{formunew1}
\end{equation}
was first solved by Caffarelli-Nirenberg-Spruck \cite{CNS85Dirichlet} and as well by Ivochkina \cite{Ivochkina85Dirichlet} for the nondegenerate case $g=g(x)>0$ in $\Omega$, provided that $\Omega$ is strictly $(k-1)$-convex. Their approach was further developed and simplified by Trudinger \cite{Trudinger95Dirichlet} to settle with more general type of equations. The above result was extended to the case $g=g(x,u)$ by Li \cite{Li90existence}, via the Leray-Schauder degree theory. Guan \cite{Guan94subsolution} also proved that the geometric condition on $\Omega$ could be replaced by the more general assumption of existence of a strict subsolution.

The degenerate case $g\geqs0$ for \eqref{formunew1} has been extensively studied as well. In this situation, the central issue is the existence of $C^{1,1}$ solutions, or equivalently for some cases the a priori $C^{1,1}$ estimates of solutions. Ivochkina-Trudinger-Wang \cite{ITW04degenerate} solved the $C^{1,1}$ regularity problem \eqref{formunew1} under the assumption $g^{1/k}\in C^{1,1}$, which gave a PDE's proof of Krylov \cite{Krylov89payoff, Krylov95nonlinear}. For degenerate Monge-Ampère equation, Guan-Trudinger-Wang derived the $C^{1,1}$ estimates for solutions in bounded convex domain when $g$ satisfies $g^{1/(n-1)}\in C^{1,1}$. Very recently, Jiao-Wang \cite{JH24degenerate} proved the $C^{1,1}$ regularity for convex solutions of \eqref{formunew1} if $\Omega$ is uniformly convex and $g^{1/(k-1)}\in C^{1,1}$. For general $k$-admissible solutions and $(k-1)$-convex domains, the corresponding question is not solved until now.

When it turns to the case $s\neq0$, there are few results for the classical solvability of the Dirichlet problem of \eqref{formu01}. We note that $|x|^{2s}$ and $|x|^{2sk/(k-1)}$ are not differentiable at the origin for almost every $s\neq0$, so that we could not apply the above estimates for degenerate Hessian equation. In this paper, we will deal with the equation of the form \eqref{formu01} and establish the uniform a priori regularity results for the solution, see Section 3. Accordingly, we can settle with the weighted eigenvalue problem for Hessian equations. In a recent work \cite{HK25variational}, the first author and Ke further proved the existence of (classical) solutions of the equation \eqref{formu01} with homogeneous boundary data, using the variational theory related to the Hardy-Sobolev type inequality for Hessian integral.

In the following, let's review the results concerning the eigenvalue problem of fully nonlinear equations. Lions \cite{Lions85remarks} first solved the eigenvalue problem for Monge-Ampère equation and obtained the existence and uniqueness results for the eigenfunction $\varphi_1$. Tso \cite{Tso90functional} further discussed the functional $I_n(u)=\int_{\Omega}(-u)\det D^2u\,dx$ and proved that $\varphi_1$ minimizes $I_n(u)/\Vert u\Vert_{L^{n+1}(\Omega)}^{n+1}$ among all convex functions vanishing on the boundary. For general $1\leqs k\leqs n$, the eigenvalue problem for $k$-Hessian equation was studied by Wang \cite{Wang94class}. More precisely, there exists a unique eigenvalue $\lambda_1>0$ so that the Dirichlet problem
\begin{equation}
\left\{
\begin{array}{ll}
 S_k(D^2u)=|\lambda u|^k  & \text{in }\Omega,  \\
 u=0  & \text{on }\partial\Omega,
\end{array}
\right.
\label{formu02}
\end{equation}
admits a unique nontrivial solution $\varphi_1\in C^\infty(\Omega)\cap C^{1,1}(\overline{\Omega})$ with $\lambda=\lambda_1$. Moreover, $\lambda_1$ satisfies the following fundamental property:
\begin{align*}
\lambda_1^k=\inf_{u\in\Phi_0^k(\Omega)}\left\{\int_{\Omega}(-u)S_k(D^2u)dx:\Vert u\Vert_{L^{k+1}(\Omega)}=1\right\}.
\end{align*}
The global $C^\infty$ regularity of $\varphi_1$ for Monge-Ampère equation was established on a smooth, strictly convex domain by Hong-Huang-Wang \cite{HHW11degenerate} for $n=2$ and by Le-Savin \cite{LS17degenerate} for $n\geqs2$, while it still remains open for Hessian equation with the case $1<k<n$.

In the current paper, we will extend these results to the weighted situation. Indeed, we will deal with the eigenvalue problem for Hessian equation with weight $|x|^{2sk}$ on the right-hand-side. Since we suppose the domain $\Omega$ always contains the origin, the weight $|x|^{2sk}$ is singular at the origin if $s<0$ while degenerate if $s>0$. Consequently, it is natural that the behavior of the solution differs between these two cases, especially at the origin. We now state our main result as follows:
\begin{thm}
Let $\Omega$ be a strictly $(k-1)$-convex bounded domain containing the origin with the boundary $\partial\Omega\in C^{3,1}$. Suppose $s>-s_0$ for $s_0=\min(1,n/2k)$, then there exists a unique positive constant $\lambda_1=\lambda_1(n,k,s,\Omega)$, so that the eigenvalue problem
\begin{equation}
\left\{
\begin{array}{ll}
 S_k(D^2u)=\big(|x|^{2s}|\lambda u|\big)^k  & \text{in }\Omega,  \\
 u=0  & \text{on }\partial\Omega,
\end{array}
\right.
\label{formu03}
\end{equation}
admits a negative solution $\varphi_1\in\Upsilon(\Omega)$ with $\lambda=\lambda_1$, which is unique up to scalar multiplication. Here, the function space $\Upsilon(\Omega)$ is given by
\begin{equation}
\left\{
\begin{array}{ll}
C^{\infty}(\Omega\setminus\{0\})\cap C^{1,1}(\overline\Omega) &\text{if}\quad s\in(0,\infty),\\
C^{\infty}(\Omega\setminus\{0\})\cap C^{1,1}(\overline\Omega\setminus\{0\})\cap C^{\alpha}(\overline\Omega) &\text{if}\quad s\in(-s_0,0),
\end{array}
\right.
\label{formu04}
\end{equation}
with some constant $\alpha\in(0,1)$. Furthermore, $\lambda_1$ satisfies
\begin{align}
\lambda_1^k=\inf_{u\in\Phi_0^k(\Omega)}\left\{\int_{\Omega}(-u)S_k(D^2u)dx: \int_{\Omega}|x|^{2sk}|u|^{k+1}dx=1\right\}.
\label{formu05}
\end{align}
\label{thm01}
\end{thm}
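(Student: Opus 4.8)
The plan is to follow the classical scheme of Lions and Wang, adapted to the weighted setting, splitting the argument into four parts: (i) solvability of the sub-eigenvalue Dirichlet problems, (ii) passage to the limit to produce the eigenfunction, (iii) the variational characterization \eqref{formu05}, and (iv) uniqueness. The a priori estimates of Section 3 (uniform gradient and second-derivative bounds for $S_k(D^2u)=|x|^{2sk}g(x,u)$) are the engine throughout; the regularity $\Upsilon(\Omega)$ in \eqref{formu04} is exactly what those estimates deliver away from the origin, together with interior Evans--Krylov and Schauder bootstrapping.

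First I would fix $\mu>0$ and solve
\begin{equation*}
\left\{
\begin{array}{ll}
S_k(D^2u)=\big(|x|^{2s}(\mu-u)\big)^k & \text{in }\Omega,\\
u=0 & \text{on }\partial\Omega,
\end{array}
\right.
\end{equation*}
by the method of continuity / Leray--Schauder degree (as in Li \cite{Li90existence}), using that $u\mapsto(\mu-u)^k$ is nondecreasing in $-u$ so that the zero-order term has the good sign, and using the a priori $C^{1,1}(\overline\Omega)$ bound from Section 3 plus Evans--Krylov for $C^{2,\alpha}$ closedness; here $s>-s_0$ guarantees $|x|^{2sk}\in L^1$ and the barrier constructions go through. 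This yields a negative solution $u_\mu$, smooth in $\Omega\setminus\{0\}$, with $\|u_\mu\|_{C^{1,1}}$ bounded uniformly for $\mu$ in compact sets. Next, normalize: set $v_\mu=u_\mu/\|u_\mu\|$ for a suitable norm (I would use $\big(\int_\Omega|x|^{2sk}|u_\mu|^{k+1}\big)^{1/(k+1)}$, the natural weighted $L^{k+1}$ norm) and define $\lambda(\mu)^k=\int_\Omega(-u_\mu)S_k(D^2u_\mu)/\int_\Omega|x|^{2sk}|u_\mu|^{k+1}$. A comparison-principle argument shows $u_\mu$ is monotone in $\mu$ and that $\|u_\mu\|\to\infty$ as $\mu\to\mu_*$ for a critical value $\mu_*$; rescaling $w_\mu=v_\mu$ and letting $\mu\uparrow\mu_*$, the uniform estimates give a limit $\varphi_1\in\Upsilon(\Omega)$ solving \eqref{formu03} with $\lambda_1$ the limit of $\lambda(\mu)$, and $\varphi_1<0$ in $\Omega$ by the strong maximum principle / Hopf lemma (the latter giving $\varphi_1\in C^{1,1}(\overline\Omega)$ with nonvanishing normal derivative when $s>0$; when $s<0$ only the weaker $C^\alpha(\overline\Omega)$ up to the origin survives, consistent with \eqref{formu04}).

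For the variational identity \eqref{formu05}, write $\Lambda=\inf\{\int_\Omega(-u)S_k(D^2u):u\in\Phi_0^k(\Omega),\ \int_\Omega|x|^{2sk}|u|^{k+1}=1\}$. Plugging in $\varphi_1$ (suitably normalized) and integrating the equation gives $\Lambda\le\lambda_1^k$. For the reverse inequality I would use the divergence structure of $S_k$ and the Gårding/Maclaurin-type inequality: for any $k$-admissible $u,\psi$ vanishing on $\partial\Omega$ one has the polarized inequality $\int_\Omega(-\psi)S_k(D^2u)\le\big(\int_\Omega(-\psi)S_k(D^2\psi)\big)^{1/(k+1)}\big(\int_\Omega(-u)S_k(D^2u)\big)^{k/(k+1)}$ (this is the concavity-type inequality underlying the Hessian analog of the Hölder/eigenvalue argument in Wang \cite{Wang94class}). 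Testing the eigenfunction equation $S_k(D^2\varphi_1)=(|x|^{2s}\lambda_1|\varphi_1|)^k$ against an arbitrary competitor $u$ and applying this inequality together with $\int|x|^{2sk}|u|^{k+1}=1$ yields $\lambda_1^k\le\int_\Omega(-u)S_k(D^2u)$, hence $\lambda_1^k\le\Lambda$, proving \eqref{formu05} and simultaneously that $\varphi_1$ is a minimizer.

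Uniqueness of $\lambda_1$ and of $\varphi_1$ up to scaling is the part I expect to be the main obstacle, since the degenerate/singular weight obstructs a naive use of the maximum principle at the origin. The plan is a sliding/normalization argument: given two negative solutions $\varphi,\psi$ with eigenvalues $\lambda,\tilde\lambda$, scale so that $\psi\le\varphi<0$ with contact somewhere, consider $t^*=\sup\{t>0:t\varphi\ge\psi\}$, and at an interior contact point of $t^*\varphi$ and $\psi$ (which cannot be the origin: near $0$ the weight forces both functions to be controlled, and the contact is excluded by the explicit behavior coming from the estimates) apply the strong comparison principle for $S_k$ to force $t^*\varphi\equiv\psi$ and then $\lambda=\tilde\lambda$; if instead the contact is pushed to $\partial\Omega$, the Hopf-type boundary estimate from Section 3 (valid when $s>0$) or a boundary barrier (when $s<0$) rules it out, again giving proportionality. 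Equality of the eigenvalues then follows by feeding the common eigenfunction into \eqref{formu05}. The delicate point throughout uniqueness is handling the origin: one must show that the set where two normalized eigenfunctions can touch is confined to $\Omega\setminus\{0\}$, which I would argue from the a priori pointwise bounds on $\varphi_1$ and $D\varphi_1$ near $0$ established in Section 3 (for $s>0$, $\varphi_1$ is Lipschitz and the weight vanishes there so the equation is uniformly elliptic away from $0$ but the RHS is under control; for $s<0$, the $C^\alpha$ bound plus the precise rate dictated by the Hardy--Sobolev scaling prevents tangential contact).
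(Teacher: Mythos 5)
Your overall skeleton (approximate, normalize, pass to the limit; slide and compare for uniqueness; test the equation for \eqref{formu05}) is in the spirit of the paper, but three of your key steps have genuine gaps. First, the existence scheme as written does not produce an eigenfunction: the family $S_k(D^2u)=\big(|x|^{2s}(\mu-u)\big)^k$ is invariant under the scaling $u\mapsto \mu u$, since $S_k$ is homogeneous of degree $k$; hence $u_\mu=\mu u_1$, there is no finite critical value $\mu_*$, the normalized functions $v_\mu$ are all identical, and $\lambda(\mu)$ is constant. The parameter must multiply $u$ itself, as in the paper's family $S_k(D^2u)=\big[(|x|^2+\delta^2)^s(1-\lambda u)\big]^k$, where blow-up of $\|u_{\lambda,\delta}\|_{L^\infty}$ as $\lambda\uparrow\lambda_\delta$ is what kills the constant term and yields the eigenpair. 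Moreover, you cannot run the continuity method directly on the weight $|x|^{2s}$: it is not $C^{1,1}$ at the origin (and singular for $s<0$), and for $s<0$ Section 3 gives only $\sup|x|^{2\beta}|D^2u|\leqs L$ and a $C^\alpha(\overline\Omega)$ bound, not the $C^{1,1}(\overline\Omega)$ closedness you invoke; this is precisely why the paper works with the regularized problem \eqref{formu06}, where classical theory applies, and then proves estimates uniform in $\delta$.

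Second, your derivation of $\Lambda\geqs\lambda_1^k$ fails for a directional reason: the polarized inequality you quote reads $\int(-u)S_k(D^2\varphi_1)\leqs I_k(u)^{1/(k+1)}I_k(\varphi_1)^{k/(k+1)}$ (for $k=1$ it is Cauchy--Schwarz), so testing $S_k(D^2\varphi_1)=\lambda_1^k|x|^{2sk}|\varphi_1|^k$ against $-u$ gives $I_k(u)^{1/(k+1)}\geqs\lambda_1^{k/(k+1)}\int|x|^{2sk}(-u)|\varphi_1|^k/\|\varphi_1\|_{L^{k+1}(\Omega;|x|^{2sk})}^{k}$, and by H\"older the last quotient is $\leqs1$, which is the useless direction; one would need a reverse H\"older inequality that is false. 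This is why the paper (following \cite{Wang94class}) proves \eqref{formu05} through the truncated functionals $J_{M,\delta}$, the parabolic descent flow, identification of minimizers with solutions of sub-eigenvalue problems, and limits $p\to k$, $\delta\to0$. Third, in the uniqueness part the statement ``the contact point cannot be the origin because of the a priori bounds'' is exactly what must be proved, and it is the heart of the matter: away from $0$ interior contact is excluded by the strong maximum principle and boundary contact by the Hopf-type Lemma, so contact is forced \emph{at} the origin. Excluding it requires, for $k>n/2$, a strictly better H\"older exponent than $2-n/k$ at the origin (obtained in the paper via Wolff potential estimates and the interior gradient estimate, or interior $C^{1,\gamma}$ for $k=n$) together with comparison against $\varepsilon w_k$; and for $k\leqs n/2$ the sliding argument breaks down altogether (the fundamental solution $w_k$ is unbounded at $0$), so the paper instead builds a spectral theory for the linearized operator -- solvability and uniqueness of the linearized Dirichlet problem, compactness of $h\mapsto v$, and a Krein--Rutman type proposition -- to conclude $\lambda^*=\lambda_1$ and proportionality. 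Your proposal does not address this dichotomy, so the uniqueness argument as sketched would not go through for $k\leqs n/2$ and is incomplete for $k>n/2$.
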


Theorem \ref{thm01} is an extension of the eigenvalue problem \eqref{formu02}. Note that $\varphi_1\in\Upsilon(\Omega)$ is viewed as a viscosity solution and a weak solution of the Dirichlet problem \eqref{formu03} when $s<0$; see \cite{TW97measure} and \cite{Urbas90viscosity}. For $k=n$, we can further derive the boundary $C^\infty$ regularity for the eigenfunction $\varphi_1$ if provided $\Omega$ smooth and strictly convex, according to the boundary regularity results in \cite{LS17degenerate}.

To prove the existence result in Theorem \ref{thm01}, we first consider the approximation problem
\begin{equation}
\left\{
\begin{array}{ll}
 S_k(D^2u)=\big[(|x|^{2}+\delta^2)^{s}|\lambda u|\big]^k  & \text{in }\Omega,  \\
 u=0  & \text{on }\partial\Omega,
\end{array}
\right.
\label{formu06}
\end{equation}
where $\delta>0$ is a small constant. The existence of solution $(\lambda_\delta, \varphi_\delta)$ to \eqref{formu06} follows by the standard procedure in \cite{Lions85remarks} and \cite{Wang94class}. Then it suffices to establish the uniform a priori $C^2$ estimates for $\varphi_\delta$ independent of $\delta$. Indeed, by taking $\delta\to0$ and extracting a subsequence, we can deduce that $\lambda_\delta$ converges to a positive constant $\lambda_1$ and $\varphi_\delta$ converges to a nontrivial function $\varphi_1$, which is a solution of the eigenvalue problem \eqref{formu03} with $\lambda=\lambda_1$. In the following, we state the regularity results for more general type of equations.
\begin{thm}
Let $\Omega$ be a strictly $(k-1)$-convex bounded domain containing the origin with the boundary $\partial\Omega\in C^{3,1}$. Let $u\in C^{3,1}(\Omega)\cap C^3({\overline\Omega})$ be a $k$-admissible solution of \eqref{formu01} vanishing on the boundary. Suppose that $g^{1/k}\in C^{1,1}(\overline\Omega\times\mathbb{R})$ is a nonnegative function. Then if $-1<s<0$, it holds for every $\beta>1$,
\begin{align}
\sup_{\Omega}|x||Du(x)|\leqs K,\quad \sup_{\Omega}|x|^{2\beta}|D^2u(x)|\leqs L,
\label{formu07}
\end{align}
where the constants $K,L$ depend only on $n,k,s,\beta,\Omega,\Vert u\Vert_{L^\infty(\Omega)}$ and $\Vert g^{1/k}\Vert_{C^{1,1}}$.

On the other hand, if $s>0$ then it holds
\begin{align}
\sup_{\Omega}|Du(x)|\leqs \hat{K},\quad \sup_{\Omega}|D^2u(x)|\leqs \hat{L},
\label{formu08}
\end{align}
where the constants $\hat{K},\hat{L}$ depend only on $n,k,s,\Omega,\Vert u\Vert_{L^\infty(\Omega)}$ and $\Vert g^{1/k}\Vert_{C^{1,1}}$.
\label{thm02}
\end{thm}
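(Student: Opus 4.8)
The plan is to establish the two estimates in \eqref{formu07} and \eqref{formu08} by a standard strategy for Hessian equations --- reduce interior second-derivative bounds to gradient bounds, and gradient bounds to the already-available $C^0$ bound --- but carried out with weight functions adapted to the singular/degenerate factor $|x|^{2sk}$. The crucial difference from the classical theory is that near the origin the right-hand side $g^{1/k}=(|x|^2+\delta^2)^s g^{1/k}(x,u)$ (or its limit) is not $C^{1,1}$, so the usual Pogorelov-type computations must be localized away from $0$ and combined with a scaling argument that tracks the precise weight. First I would prove the weighted gradient estimate: for $-1<s<0$ one shows $\sup_\Omega |x||Du|\le K$ by differentiating the equation, using the concavity of $S_k^{1/k}$ along admissible directions, and testing with an auxiliary function of the form $w=|x|^2|Du|^2 e^{\phi(u)}$ or $w=\log(|x|^2|Du|^2)+\phi(u)$; the weight $|x|^2$ exactly compensates the growth of the coefficients coming from $\partial_i(|x|^{2s})\sim |x|^{2s-1}$. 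For $s>0$ the weight is harmless (indeed $g^{1/k}\in C^{1,1}$ genuinely when $s\ge 1/2$, and for $0<s<1/2$ the factor is Hölder but vanishes at $0$), so the ordinary gradient estimate of Trudinger applies after absorbing the extra term, giving the unweighted bound in \eqref{formu08}.

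Next I would turn to the second-derivative estimates. The global bound splits as usual into an interior estimate and a boundary estimate. For the interior part, away from the origin one fixes a ball $B_r(x_0)\subset\Omega\setminus\{0\}$ on which $g^{1/k}$ is genuinely $C^{1,1}$ with norm controlled by $r^{-2s}$ (for $s<0$) and runs the classical Pogorelov argument on $w=\rho(x)^2 (D_{\xi\xi}u)\, e^{\phi(|Du|^2)}$ with a cutoff $\rho$ supported in $B_r$; careful bookkeeping of the powers of $r$ and the chain rule through $\phi$ yields $|x|^{2\beta}|D^2u|\le L$ for any $\beta>1$, the loss from the optimal exponent $2$ to $2\beta$ coming from the need to dominate terms like $(D^2 g^{1/k})\sim r^{-2s-2}$. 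For $s>0$ one instead exploits that the degenerate weight only helps: the standard interior $C^{1,1}$ estimate for the nondegenerate equation (Theorem of \cite{CNS85Dirichlet}, \cite{ITW04degenerate}) applies uniformly in $\delta$ because $\|g^{1/k}\|_{C^{1,1}}$ stays bounded, giving $\sup|D^2u|\le\hat L$. The boundary estimate in both cases follows from the strict $(k-1)$-convexity of $\Omega$ and the $C^{3,1}$ regularity of $\partial\Omega$, exactly as in \cite{CNS85Dirichlet}, \cite{Trudinger95Dirichlet}: tangential-tangential derivatives are controlled from $u=0$ on $\partial\Omega$, tangential-normal from a barrier, and the double-normal second derivative from the equation itself together with the nondegeneracy $S_k=g\ge 0$ and the lower bound on $\sigma_{k-1}$; none of this is affected by the weight once one stays a fixed distance from the origin, and near the origin the boundary is irrelevant since $0\in\Omega$.

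The main obstacle I expect is the interior second-derivative estimate near the origin in the singular case $s<0$: the coefficient $|x|^{2s}$ blows up there, so one cannot simply cite the classical Pogorelov estimate, and the naive test function $|x|^2 D_{\xi\xi}u$ does not close because the extra terms generated when the maximum of $w$ occurs near $0$ are of borderline order. The remedy is a rescaling argument: at a point $x_0$ near the origin set $d=|x_0|$, rescale $u_d(y)=d^{-2}u(x_0+d\,y)$ on the unit ball, observe $u_d$ solves a Hessian equation with right-hand side $\big(|x_0+dy|^{2s}|\lambda|^k\big)d^{2sk}|u_d|^k$ whose $C^{1,1}$ norm on $B_{1/2}$ is bounded independently of $d$ (after multiplying by $d^{-2sk}$, which is exactly the weight $|x|^{-2sk}$ reabsorbed), apply the scale-invariant interior estimate to $u_d$, and translate back; the exponent $2\beta$ rather than $2$ appears precisely because one must sacrifice an arbitrarily small power of $d$ to control the variation of $|x_0+dy|^{2s}$ over the unit ball when $|x_0|$ is comparable to $d$. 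Combining this local-near-origin bound with the classical global bound away from a neighborhood of $0$ yields \eqref{formu07}; the case $s>0$ requires none of this since the weight is bounded.
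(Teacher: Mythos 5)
Your weighted gradient estimate for $-1<s<0$ is in the right spirit (the paper also works with a test function of the form $u_\xi\,\varphi(u)\,|x|$ and a maximum-principle computation), but the second-derivative part of your plan rests on tools that do not exist for $k$-Hessian equations. Both your cutoff-localized Pogorelov argument on $B_r(x_0)\subset\Omega\setminus\{0\}$ and your rescaling step, in which you ``apply the scale-invariant interior estimate'' to $u_d(y)=d^{-2}u(x_0+dy)$ on a unit ball, presuppose a purely interior $C^2$ estimate depending only on local data ($\|u\|_{C^1}$ and the right-hand side). Such estimates fail already for the Monge--Amp\`ere equation (Pogorelov's example) and for $k$-Hessian equations with $k\geqs3$ (Urbas); the Pogorelov mechanism requires a factor that vanishes on the boundary of the region where the maximum is taken, and neither a cutoff nor the rescaled ball supplies one. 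The paper avoids this by keeping the argument global: the auxiliary function is $G=|x|^{2\beta}\varphi\big(\tfrac12|x|^2|Du|^2\big)u_{\xi\xi}$ on all of $\Omega$, with $|x|^2$ playing the role of the vanishing boundary factor (the origin is treated as an artificial ``boundary''), while maxima on $\partial\Omega$ are handled by genuine boundary $C^2$ estimates. A second, independent gap is degeneracy: the Chou--Wang Pogorelov estimate you invoke uses strict positivity of the right-hand side, whereas here $g\geqs0$ may vanish (this is exactly the situation $g=|\lambda u|^k$ needed later). The paper's key new ingredient is to manufacture the missing positive term from the convexity of $|x|^2$ (the Hessian of $\rho=|x|^2$ contributes $2\beta\mathcal{F}/\rho$), combined with a case split on the eigenvalues, Lemma~3.1 of Chou--Wang, and a two-step scheme (first $\beta=\Theta$ large, then general $\beta>1$ on a small ball $B_{r_0}$); your bookkeeping provides no substitute for this.

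The case $s>0$ is also not as harmless as you assume. For $0<s<1$ the factor $(|x|^2+\delta^2)^s$ is \emph{not} uniformly $C^{1,1}$ (its second derivatives behave like $|x|^{2s-2}$), and for $0<s<\tfrac12$ it is not even uniformly Lipschitz, so you cannot cite the nondegenerate or ITW-type degenerate $C^{1,1}$ theory ``uniformly in $\delta$'', nor apply Trudinger's gradient estimate by ``absorbing the extra term''; and again there is no interior $C^{1,1}$ estimate to fall back on. The paper instead differentiates the equation, derives the differential inequalities $F^{ij}\partial_{ij}(|Du|^2)\geqs-C_1(1+|x|^{2s-1})|Du|-C_2|Du|^2$ and $F^{ij}\partial_{ij}(\Delta u)\geqs-C_1(1+|x|^{2s-1})-C_2\Delta u$ --- crucially using $f\geqs0$ and the sign $2sn+4s(s-1)>0$ to discard the worst term --- and then applies the Alexandrov (ABP) maximum principle, which is available because $\det(F^{ij})\geqs c(n,k)>0$ and $|x|^{2s-1}\in L^n(\Omega)$ for $s>0$, together with the weighted gradient bound of Theorem~\ref{thm11} to control the $L^n$ norms. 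Without this (or an equivalent) mechanism, your argument for \eqref{formu08} does not close, and consequently neither does your overall proof of \eqref{formu07}--\eqref{formu08}.
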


Since the origin point is inside the domain, we can derive the gradient and second derivatives estimates on the boundary $\partial\Omega$, using the same argument as in \cite{CNS85Dirichlet} and \cite{Wangnote}. To prove the latter of \eqref{formu07}, we utilize the idea of Pogorelov estimate for Hessian equation, see \cite[Theorem 4.1]{CW01variational}. Indeed, we view the origin as the ``boundary" in the proof of Pogorelov estimate. A similar argument was also discussed by Wang-Zhou \cite{WZ21complex} for complex Hessian equation
\begin{equation*}
\left\{
\begin{array}{ll}
 \sigma_k(u_{i\bar j})=\dfrac{g}{(|z|^2+\delta^2)^\alpha}  & \text{in }B, \\
 u=\varphi  & \text{on }\partial B,
\end{array}
\right.
\end{equation*}
where $0<\alpha<k$, $0<g\in C^{\infty}(B)$ and $\varphi\in C^{1,1}(\partial B)$. They still use the strict positivity of $g$ to yield the estimates, as in \cite{CW01variational}. However, in our version $g$ is supposed to be nonnegative but not strictly positive. Note that the choice $g=|\lambda u|^k$ in \eqref{formu06} fits the above condition. To settle with this difficulty, we make use of the convexity of $|x|^2$ to provide suitable positive terms in the proof, see Theorem \ref{thm12}. Our result applies to the complex equation as well. For the case $s>0$, we additionally derive the estimate \eqref{formu08} by applying the Alexandrov maximum principle to equations of $Du$ and $D^2u$, see Theorem \ref{thm14}.

We finally turn our attention to the uniqueness result of the eigenvalue problem \eqref{formu03}. Since the equation is singular or degenerate at the origin, we fail to apply the classical strong maximum principle directly. Instead, we need to use the basic property of fundamental solution $w_k$ (see \eqref{formunew} for the definition) of $k$-Hessian equation. Precisely, $w_k$ tends to $-\infty$ as $x$ goes to $0$ if $k\leqs n/2$, while it is H\"older continuous of order $2-n/k$ if $k>n/2$. Due to the different behavior at the origin, we treat with these two cases separately.

For the case $k\leqs n/2$, we first prove that the linearized equation of \eqref{formu03}
\begin{equation*}
\left\{
\begin{array}{ll}
 F^{ij}(D^2u)\partial_{ij}v=|x|^{2s}h  & \text{in }\Omega\setminus\{0\},  \\
 v=0  & \text{on }\partial\Omega,
\end{array}
\right.
\end{equation*}
is solvable in $W^{2,p}_{loc}(\Omega\setminus\{0\})\cap C(\overline{\Omega}\setminus\{0\})\cap L^{\infty}(\Omega)$ for some $p>n/2$ if $h\in L^{\infty}(\Omega)$. Then, by applying the comparison principle with $w_k$, we can show that the solution $v$ is unique. Moreover, an application of compact embedding yields that the map $h\mapsto v$ is a compact mapping from $L^\infty(\Omega)$ to itself. Using the spectral theory of linear operators, we then prove the uniqueness of solutions to eigenvalue problem \eqref{formu03}, by a similar argument in \cite[Theorem 4.1]{Wang94class}.

For the case $k>n/2$, the situation is totally different. We need to derive a higher regularity at the origin for the eigenfunction than that for the fundamental solution $w_k$. For $n/2<k<n$, we utilize the Wolff potential theory in \cite{Ld02potential} and the interior gradient estimate in \cite{CW01variational} to deduce the H\"older estimate with an order larger than $2-n/k$, while for $k=n$ we directly apply the interior $C^{1,\gamma}$ estimate for Monge-Ampère equation. Then, using the comparison principle with $w_k$, we reach a contradiction to the a priori regularity estimates of solutions and therefore conclude the uniqueness part of Theorem \ref{thm01}.

This paper is organized as follows. In Section 2, we introduce some preliminary results of the operator $\sigma_k$. In Section 3, we prove the a priori regularity estimates for solutions of Hessian equation with weight. In Section 4 and Section 5, we study the existence and uniqueness results for eigenvalue problem \eqref{formu03}, respectively. Finally, we prove the spectral feature \eqref{formu05} for the eigenvalue $\lambda_1$ in Section 6.

\section{Preliminaries}
Let $S_k$ be defined as above. For $u\in\Phi^k(\Omega)$, we always denote $F(D^2u)=S_k^{1/k}(D^2u)$. For simplicity, we always view the following equations the same:
\begin{align*}
S_k(D^2u)=\psi\qquad\text{and}\qquad F(D^2u)=\psi^{1/k}.
\end{align*}
For latter applications, we denote the notions 
\begin{align*}
S_k^{ij}=\frac{\partial S_k}{\partial u_{ij}},\quad S_k^{ij,pq} =\frac{\partial^2S_k}{\partial u_{ij}\partial u_{pq}} \quad\text{and}\quad
F^{ij}=\frac{\partial F}{\partial u_{ij}},\quad 
F^{ij,pq}=\frac{\partial^2F}{\partial u_{ij}\partial u_{pq}}.
\end{align*}

In the following, we introduce some inequalities for the polynomial $\sigma_k(\lambda)$. For $\lambda\in\Gamma_k$, denote $\sigma_{k;i}(\lambda)=\sigma_k(\lambda) \big|_{\lambda_i=0}$ and $\sigma_{k;ij}(\lambda)= \sigma_k(\lambda)\big|_{\lambda_i=\lambda_j=0}$ for $i\neq j$.
\begin{proposition}\label{pro21}
Assume $\lambda=(\lambda_1,\ldots,\lambda_n)\in\Gamma_k$ with $\lambda_1\geqs\ldots\geqs\lambda_n$. Then it holds
\begin{enumerate}[\rm(i)]
\item $\sum_{i=1}^n\sigma_{k-1;i}(\lambda)=(n-k+1)\sigma_{k-1}(\lambda)$;
\item $\sigma_{k-1;n}(\lambda)\geqs\ldots\geqs\sigma_{k-1;1}(\lambda)>0$;
\item $\lambda_k\geqs0$ and $\sigma_{k-1;k}(\lambda)\geqs\theta\sum_{i=1}^n\sigma_{k-1;i}(\lambda)$ for some $\theta=\theta(n,k)>0$;
\item {\rm(Maclaurin inequality)}
$\big[\binom{n}{k-1}^{-1}\sigma_{k-1}(\lambda)\big]^{1/(k-1)} \geqs\big[\binom{n}{k}^{-1}\sigma_{k}(\lambda)\big]^{1/k}$;
\item {\rm(G\aa rding inequality)}
$\sum_{i=1}^n\mu_i\sigma_{k-1;i}(\lambda)\geqs k[\sigma_k(\lambda)]^{(k-1)/k}[\sigma_k(\mu)]^{1/k}$ for any $\mu\in\Gamma_k$;
\item $\prod_{i=1}^n\sigma_{k-1;i}(\lambda)\geqs C[\sigma_{k}(\lambda)]^{n(k-1)/k}$ for some $C=C(n,k)>0$.
\end{enumerate}
\end{proposition}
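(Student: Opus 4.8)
The plan is to prove (i)--(vi) one at a time, using throughout three elementary facts about $\sigma_k$: the identity $\sigma_{k-1;i}(\lambda)=\partial\sigma_k(\lambda)/\partial\lambda_i$; the splitting $\sigma_m(\lambda)=\sigma_{m;i}(\lambda)+\lambda_i\sigma_{m-1;i}(\lambda)$ together with Euler's relation $\sum_i\lambda_i\sigma_{k-1;i}(\lambda)=k\sigma_k(\lambda)$; and the classical structural fact (see \cite{CNS85Dirichlet}) that deleting one (resp.\ two) coordinate(s) of a vector in $\Gamma_k(\mathbb R^n)$ produces a vector in $\Gamma_{k-1}(\mathbb R^{n-1})$ (resp.\ $\Gamma_{k-2}(\mathbb R^{n-2})$); in particular $\sigma_{k-1;i}(\lambda)$ and $\sigma_{k-2;ij}(\lambda)$ are positive on $\Gamma_k$.

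Items (i), (ii), (iv), (v) are then routine. For (i), a monomial of $\sigma_{k-1}$ involves $k-1$ indices, hence survives in $\sigma_{k-1;i}$ for exactly the $n-k+1$ values of $i$ outside its support, and summing over $i$ gives $(n-k+1)\sigma_{k-1}$. For (ii), splitting the monomials of $\sigma_{k-1}$ according to whether they contain $\lambda_i$ or $\lambda_j$ yields $\sigma_{k-1;i}(\lambda)-\sigma_{k-1;j}(\lambda)=(\lambda_j-\lambda_i)\sigma_{k-2;ij}(\lambda)$; since $\sigma_{k-2;ij}(\lambda)\geqs0$ and $\lambda_i\geqs\lambda_j$ for $i<j$, the monotonicity follows, and positivity of the least term, $\sigma_{k-1;1}(\lambda)=\sigma_{k-1}(\lambda_2,\dots,\lambda_n)>0$, is the deletion fact. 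For (iv), I invoke Newton's inequalities $p_{j-1}p_{j+1}\leqs p_j^2$ with $p_j=\binom nj^{-1}\sigma_j$, valid for every real $\lambda$; on $\Gamma_k$ one has $p_0,\dots,p_k>0$, so $q_j:=p_j/p_{j-1}>0$ is non-increasing in $j$, whence the geometric means $p_j^{1/j}=(q_1\cdots q_j)^{1/j}$ are non-increasing for $1\leqs j\leqs k$, and the case $j=k$ is the Maclaurin inequality. For (v), I use the concavity of $\sigma_k^{1/k}$ on $\Gamma_k$ (Gårding's theorem): writing the first-order inequality for the concave function $\sigma_k^{1/k}$ at $\lambda$ evaluated at $\mu$, substituting $\nabla\sigma_k^{1/k}(\lambda)=\tfrac{1}{k}\sigma_k(\lambda)^{1/k-1}(\sigma_{k-1;i}(\lambda))_i$ and invoking Euler's relation, rearranges exactly to $\sum_i\mu_i\sigma_{k-1;i}(\lambda)\geqs k\sigma_k(\lambda)^{(k-1)/k}\sigma_k(\mu)^{1/k}$.

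The two items carrying implicit constants, (iii) and (vi), are where the real work lies. That $\lambda_k\geqs0$ is the classical statement that a $\Gamma_k$-vector has at most $k-1$ non-positive components, proved by an elementary sign analysis: if $\lambda_k<0$ then $\lambda_k,\dots,\lambda_n<0$, and one produces $j\leqs k$ with $\sigma_j\leqs0$. For the estimate $\sigma_{k-1;k}(\lambda)\geqs\theta\sum_i\sigma_{k-1;i}(\lambda)$ and for (vi), I would argue by homogeneity and compactness: both inequalities are homogeneous in $\lambda$ (of degree $k-1$, resp.\ $n(k-1)$), so it suffices to prove them on the slice $\Sigma=\{\lambda\in\Gamma_k:\sigma_k(\lambda)=1\}$, where the left-hand sides are continuous and strictly positive by (ii); the only way $\Sigma$ fails to be compact is by $\lambda$ running to $\partial\Gamma_k$ or to infinity, along which $\sigma_k$ degenerates, and precisely there a cofactor that becomes small is forced to be compensated by others becoming large, so $\sigma_{k-1;k}$ and $\prod_i\sigma_{k-1;i}$ stay bounded below and the infimum over $\Sigma$ is attained and positive. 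Verifying this degeneracy behaviour --- showing the product $\prod_i\sigma_{k-1;i}$ cannot collapse even though individual cofactors can --- is the main obstacle; for that step, and for the details of the elementary items above, I would otherwise refer to the standard references \cite{CNS85Dirichlet, Wang94class}.
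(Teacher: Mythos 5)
The paper itself gives no proof of Proposition 2.1 --- it only cites G\aa rding, Lieberman, Lin--Trudinger and Wang --- so for items (i), (ii), (iv), (v) you actually go beyond the paper, and your arguments there are correct: the counting argument for (i), the identity $\sigma_{k-1;i}(\lambda)-\sigma_{k-1;j}(\lambda)=(\lambda_j-\lambda_i)\sigma_{k-2;ij}(\lambda)$ for (ii), Newton/Maclaurin for (iv), and the first-order concavity inequality combined with Euler's relation for (v) are all standard and sound. One caveat on (v): you deduce it from the concavity of $\sigma_k^{1/k}$, whereas the paper runs the logic in the opposite direction (it quotes (v) and then derives concavity from it in Section 2); your route is fine provided you import concavity from G\aa rding's original theorem and not from this paper's Section 2, otherwise it becomes circular. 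For the first half of (iii) a cleaner argument than your sign analysis is available: deleting the largest entry of a $\Gamma_k$-vector yields a $\Gamma_{k-1}$-vector, so deleting $\lambda_1,\dots,\lambda_{k-1}$ gives $(\lambda_k,\dots,\lambda_n)\in\Gamma_1$, whence $(n-k+1)\lambda_k\geq\lambda_k+\cdots+\lambda_n>0$.

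The genuine gap is your compactness plan for the second half of (iii) and for (vi). As stated it does not work. The slice $\Sigma=\{\lambda\in\Gamma_k:\sigma_k(\lambda)=1\}$ is the wrong normalization: both inequalities compare quantities of equal homogeneity, and $\Sigma$ is non-compact, so ``the infimum over $\Sigma$ is attained and positive'' is unjustified. Even with the correct normalization $|\lambda|=1$, which does make $\overline\Gamma_k\cap S^{n-1}$ compact, soft continuity cannot conclude: at boundary points of the cone both sides of the inequalities may vanish simultaneously. For example, rescaling $\lambda=(t,1/t,0,\dots,0)\in\overline\Gamma_k$ (for $k\geq3$) with $t\to\infty$ onto the sphere, one approaches $e_1$, where $\sigma_{k-1;k}$, $\sigma_{k-1}$, $\prod_i\sigma_{k-1;i}$ and $\sigma_k$ all vanish; the relevant ratios are then of the form $0/0$, and obtaining the uniform constants $\theta(n,k)$ and $C(n,k)$ requires comparing rates of degeneration. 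That quantitative comparison is precisely the content of the Lin--Trudinger lemmas (and of the corresponding lemma in Wang's paper), and it is not supplied by your heuristic that ``a small cofactor is compensated by large ones.'' Since you ultimately defer these two items to the standard references --- which is exactly what the paper does for the entire proposition --- the honest options are either to reproduce the Lin--Trudinger arguments or to drop the compactness sketch and cite them outright; as written, the sketch would not survive as a proof.
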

For the proof of Proposition \ref{pro21}, we refer to \cite{Garding59inequality, Lieberman96inequality, LT94inequality, Wang94class}. Using the properties of $\sigma_k$, we give some corresponding results for the operator $F=S_k^{1/k}$. Notice that $(\partial/\partial\lambda_i)\sigma_k(\lambda)=\sigma_{k-1;i}(\lambda)$. By (v) we infer that
\begin{align*}
\sum_{i=1}^n\mu_i\frac{\partial}{\partial\lambda_i}\sigma_k^{1/k}(\lambda)\geqs\sigma_k^{1/k}(\mu)\quad\text{holds for any}\quad\lambda,\mu\in\Gamma_k.
\end{align*}
This illustrates that $\sigma_k^{1/k}(\lambda)$ is concave with respect to $\lambda\in\Gamma_k$. Hence, we can deduce that $F(D^2u)=S_k^{1/k}(D^2u)$ is concave with respect to $D^2u$, where $u$ is a $k$-admissible function. By (vi), we obtain that
\begin{align*}
\prod_{i=1}^n\frac{\partial}{\partial\lambda_i}\sigma_k^{1/k}(\lambda)\geqs C(n,k)>0.
\end{align*}
Therefore, it holds that $\det(F^{ij}(D^2u))\geqs C(n,k)>0$, for any $u\in\Phi^k(\Omega)$.

\section{Uniform Estimates}
In this section, we establish the a priori estimates for  gradient and second derivatives of solutions to the following Hessian equation
\begin{align}
S_k(D^2u)=\big[(|x|^2+\delta^2)^sf(x,u)\big]^k,
\label{formu11}
\end{align}
where $f(x,u)$ is a nonnegative function in $\overline\Omega\times\mathbb{R}$ and $s\in(-1,\infty)$ is fixed.

\begin{thm}\label{thm11}
Let $\Omega$ be a strictly $(k-1)$-convex bounded domain containing the origin with the boundary $\partial\Omega\in C^3$. Let $u\in C^3(\Omega)\cap C^1(\overline\Omega)$ be a k-admissible solution of \eqref{formu11} with the boundary condition $\varphi\in C^{1,1}(\partial\Omega)$. Suppose that $f$ is a nonnegative Lipschitz continuous function. Then it holds that
\begin{align}
\sup_{\Omega}|x||Du(x)|\leqs K,
\label{formu12}
\end{align}
where the constant $K$ depends on $n,k,s,\Omega,\Vert u\Vert_{L^\infty(\Omega)},\Vert\varphi\Vert_{C^{1,1}(\partial\Omega)}$ and $\Vert f\Vert_{C^{0,1}}$.
\end{thm}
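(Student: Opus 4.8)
The plan is to prove the weighted global gradient estimate $\sup_\Omega |x|\,|Du(x)| \leq K$ by separately controlling the gradient on the boundary and in the interior, and then combining the two via a maximum principle argument applied to a well-chosen auxiliary function. For the boundary piece, since $u$ vanishes (or equals $\varphi\in C^{1,1}$) on $\partial\Omega$ and $0\in\Omega$ is an interior point, the weight $(|x|^2+\delta^2)^s$ is smooth and bounded above and below near $\partial\Omega$ uniformly in $\delta$; hence near the boundary the equation is a uniformly elliptic $k$-Hessian equation with bounded right-hand side, and the classical barrier construction of Caffarelli--Nirenberg--Spruck \cite{CNS85Dirichlet} (using strict $(k-1)$-convexity of $\Omega$, see also \cite{Wangnote}) gives $\sup_{\partial\Omega}|Du|\leq C$ with $C$ depending on the stated quantities but not on $\delta$. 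Since $|x|$ is bounded on $\overline\Omega$, this yields $\sup_{\partial\Omega}|x|\,|Du|\leq C$.

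For the interior, the idea is to work with the auxiliary function $w = |x|^{2}|Du|^{2}\,e^{\phi(u)}$ (or more robustly $w = \eta(x)|Du|^2 e^{\phi(u)}$ with $\eta$ comparable to $|x|^2$), where $\phi$ is a suitable increasing convex function of $u$ chosen at the end to absorb bad terms; this is the standard device for interior gradient estimates for Hessian equations. If the maximum of $w$ over $\overline\Omega$ is attained at a boundary point or at the origin, we are done by the boundary estimate (note $w=0$ at the origin, so in fact the origin never realizes a positive maximum). So assume the maximum is at an interior point $x_0\neq 0$. Rotating coordinates so that $D^2u(x_0)$ is diagonal and differentiating $\log w$ twice, I would use the first-order condition to eliminate $D(|Du|^2)$ in terms of $D\log(|x|^2)$ and $\phi'(u)Du$, then feed $0\geq F^{ij}\partial_{ij}\log w$ into the differentiated equation. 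Differentiating \eqref{formu11}: from $F(D^2u)=(|x|^2+\delta^2)^sf(x,u)$ one gets $F^{ij}\partial_{ij}u_\ell = \partial_\ell\big[(|x|^2+\delta^2)^sf\big]$, contract with $u_\ell$, and use concavity of $F$ to discard the (favorable-signed) term $F^{ij,pq}u_{ij\ell}u_{pq\ell}$. The key structural inputs from Proposition~\ref{pro21} are that $F^{ij}$ is positive definite, that $\sum_i F^{ii}=\sum_i \partial_{\lambda_i}\sigma_k^{1/k}$ is bounded below by a positive constant (giving a lower bound on the trace of $F^{ij}$ that controls the gradient terms coming from differentiating $|x|^2$), and the concavity of $F$.

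The main obstacle I anticipate is handling the two genuinely new difficulties introduced by the weight: first, the factor $(|x|^2+\delta^2)^s$ is only Hölder (not Lipschitz) at the origin when $s<0$, so $\partial_\ell[(|x|^2+\delta^2)^s]$ behaves like $|x|^{2s-1}$ and is unbounded near $0$ — this is precisely why the auxiliary function carries the weight $|x|^2$ rather than no spatial weight, and one must check that the $|x|^2$ factor exactly compensates the $|x|^{2s-1}$ singularity (the product $|x|^2\cdot|x|^{2s-1}\cdot|Du|\sim |x|^{2s+1}|Du|$, and one wants to re-express $|Du|$-powers via $w$ itself and close the inequality — here $s>-1$ is used, so $2s+1>-1$, though the sharp bookkeeping will need care). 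Second, when differentiating $\log(|x|^2)$ (or $\log\eta$) one picks up terms like $F^{ij}\partial_{ij}\log|x|^2$ which, since $\log|x|^2$ is not concave, contribute terms of indefinite sign of size $F^{ij}\delta_{ij}/|x|^2 + |x|^{-4}F^{ij}x_ix_j$; these must be dominated either by the good third-order term retained from $F^{ij}\partial_{ij}|Du|^2 \geq \sum_i F^{ii}u_{ii}^2$ (a Cauchy--Schwarz / Garding-type lower bound) or by enlarging $\phi'$. Balancing these competing terms — the $\phi$-terms, the weight-gradient terms, and the $\log|x|^2$-Hessian terms — against the one good quadratic term, uniformly in $\delta$, is where the real work lies; I would first carry it out schematically to identify the exact power of $|x|$ and of $|Du|$ in each term, then choose $\phi(u)=A(e^{Bu}-1)$ or similar with $A,B$ large depending only on the allowed constants.
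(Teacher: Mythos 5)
Your overall architecture (boundary gradient bound via the CNS barrier construction, then an interior maximum-principle argument for a weighted auxiliary function of the form spatial weight $\times$ gradient $\times$ function of $u$) is the same as the paper's, which works with $G=u_\xi\,(M-u)^{-1/2}\,|x|$. But the mechanism you name for closing the interior estimate is not sufficient, and the step you defer (``balancing the competing terms'') is precisely where the paper's key idea lives. The bad terms produced by differentiating the spatial weight are proportional to the full trace $\mathcal{F}=\sum_iF^{ii}$ (they look like $\mathcal{F}|x|^{-2}$ and $\mathcal{F}u_1^2$, cf.\ \eqref{formu16}), whereas your good term $\sum_iF^{ii}u_{ii}^2$, after inserting the first-order condition at the maximum point, is proportional to $F^{11}$, the coefficient in the direction of the largest eigenvalue, which a priori can be far smaller than $\mathcal{F}$. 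A constant lower bound on the trace (your stated input) does not repair this mismatch, and enlarging $\phi'$ does not either, since $\phi'$ multiplies bad terms as well. The paper resolves this as follows: if $|x|\,|Du|$ is large at the maximum point, the first-order condition forces $u_{11}\le -u_1^2/(4M)<0$ as in \eqref{formu17}; this gives $S_{k-1}\le S_k^{11}$, hence $F^{11}\ge\theta(n,k)\,\mathcal{F}$ by Proposition \ref{pro21}, and simultaneously the very negative smallest eigenvalue forces $\sigma_{k-1;n}\gtrsim |u_{y_ny_n}|^{k-1}$, yielding the quantitative bound $\mathcal{F}\ge F^{y_ny_n}=\tfrac1k\psi^{1-k}\sigma_{k-1;n}\gtrsim \rho^{2k-2}u_1^{2k-2}M^{1-k}$ as in \eqref{formu19}. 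Note that this exploits, rather than suffers from, the mere nonnegativity of $f$: when $\psi$ is small, $\psi^{1-k}$ is large. None of this appears in your sketch.

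The same missing bound is what makes the singular weight manageable on the whole range $s>-1$. Your bookkeeping ``$|x|^2\cdot|x|^{2s-1}|Du|\sim|x|^{2s+1}|Du|$ with $2s+1>-1$'' does not close: to bound $t=|x|\,|Du|$ you must dominate a term of size $|x|^{2s}t$, which for $s<0$ is unbounded in $x$ no matter how large $t$ is, unless the good term carries a compensating negative power of $|x|$. In the paper this compensation comes exactly from $F^{11}\ge\theta\mathcal{F}$ together with $\mathcal{F}\gtrsim\psi^{1-k}u_1^{2k-2}$ and $\psi\lesssim(|x|^2+\delta^2)^s$: after dividing \eqref{formu16} by $\mathcal{F}$ and multiplying by $\rho^2$, the singular factor is cancelled and $s>-1$ is used only in the final comparison. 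Two smaller points: the term $F^{ij,pq}u_{ij\ell}u_{pq\ell}$ you plan to discard by concavity never arises in the gradient estimate, since the equation is differentiated only once (it belongs to the second-derivative estimate of Theorem \ref{thm12}); and your boundary reduction, including dismissing a maximum at the origin, is fine and matches the paper.
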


\begin{proof}
Since $\Omega$ is $(k-1)$-convex, one can construct the supersolution and subsolution near the boundary for \eqref{formu11} (see \cite{CNS85Dirichlet} and \cite{Wangnote}). Indeed, extend $\varphi$ to $\Omega$ such that it is harmonic. By the geometric assumption of $\Omega$, there exists a subsolution $\underline u$ near the boundary such that it vanishes on $\partial\Omega$. Then by the comparison principle, we have
\begin{align*}
\varphi+\sigma\underline u\leqs u\leqs \varphi \quad\text{near  }\partial\Omega,
\end{align*}
provided $\sigma$ large enough, independent of $\delta$. Hence by $\varphi+\sigma\underline u=u=\varphi$ on $\partial\Omega$, we deduce that $\partial_\gamma(\varphi+\sigma\underline u) \leqs \partial_\gamma u \leqs \partial_\gamma\varphi$, where $\gamma$ is the unit inner normal to $\partial\Omega$. Therefore, we obtain the gradient estimate on the boundary.

By \eqref{formu11}, we have 
\begin{align}
F(D^2u)=(|x|^2+\delta^2)^sf(x,u):=\psi(x,u).
\label{formu13}
\end{align}
In order to establish the global estimate \eqref{formu12}, we consider the auxiliary function
\begin{align*}
G(x,\xi)=u_{\xi}(x)\varphi(u)\rho(x),
\end{align*}
where $\rho(x)=|x|,\varphi(u)=1/(M-u)^{1/2}$ and $M=4(\sup_\Omega|u|+1)$. Suppose $G$ attains its maximum at $x=x_0\in\Omega$ and $\xi=e_1$. We also assume $x_0\neq0$, otherwise it is a trivial case. Then, we have $u_1>0$ and $u_i=0$ at $x_0$ for $i\geqs2$. It holds at $x_0$ that $G_i=0$ and $\{G_{ij}\}\leqs0$, which yield that
\begin{align}
u_{1i}=-\frac{u_1}{\varphi\rho}(u_i\varphi'\rho+\varphi\rho_i),
\label{formu14}
\end{align}
and
\begin{align}
0\geqs F^{ij}G_{ij}=&F^{ij}u_{1ij}\varphi\rho+F^{ij}u_1u_{ij}\varphi'\rho+F^{ij}u_1u_iu_j\varphi''\rho+u_1\varphi F^{ij}\rho_{ij}\nonumber\\
&+2u_1\varphi'F^{ij}u_i\rho_j+2F^{ij}u_{1i}(u_j\varphi'\rho+\varphi\rho_j)\nonumber\\
=&\varphi\rho\partial_1\psi+u_1\psi\varphi'\rho+u_1\rho(\varphi''-\frac{2\varphi'^2}{\varphi})F^{ij}u_iu_j+u_1\varphi F^{ij}\rho_{ij}\nonumber\\
&-2u_1\varphi'F^{ij}u_i\rho_j-\frac{2u_1\varphi}{\rho}F^{ij}\rho_i\rho_j.
\label{formu15}
\end{align}
Note that the last equality follows from \eqref{formu14} and formulae $F^{ij}u_{ij}=\psi$ and $F^{ij}u_{1ij}=\partial_1\psi$, the latter of which is obtained by differentiating the equation \eqref{formu13}.

By direct calculation, we have
\begin{align*}
\varphi'=\frac{1}{2(M-u)^{3/2}}\quad\text{and}\quad \varphi''=\frac{3}{4(M-u)^{5/2}}.
\end{align*}
Hence, it holds that $\varphi''-2\varphi'^2/\varphi\geqs1/(16M^{5/2})$. Moreover, we have $|\rho_i|\leqs 1$ and $|\rho_{ij}|\leqs |x|^{-1}=\rho^{-1}$. Denote $\mathcal{F}=\sum F^{ii}$. Then multiplying \eqref{formu15} by $16M^{5/2}$ and observing that $u_1\psi\varphi'\rho\geqs0$, we obtain
\begin{align}
0\geqs-16M^{5/2}\varphi\rho|\partial_1\psi|+\rho F^{11}u_1^3-C\mathcal{F}(\frac{M^2}{\rho}u_1+Mu_1^2),
\label{formu16}
\end{align}
where the constant $C$ depends only on $n,k$. To continue, we assume that $\rho(x_0)|Du(x_0)|>C_1M$, otherwise we are done. Hence by \eqref{formu14}, we have
\begin{align}
u_{11}=-\frac{\varphi'}{\varphi}u_1^2-\frac{\rho_i}{\rho u_1}u_1^2\leqs-\frac{1}{4M}u_1^2<0,
\label{formu17}
\end{align}
provided $C_1$ sufficiently large. Therefore, we deduce that
\begin{align*}
S_{k-1}(D^2u)=S_k^{11}(D^2u)+u_{11}S_{k-1}^{11}(D^2u)-\sum_{i=2}^nu_{1i}^2S_{k-1}^{i1,1i}(D^2u)\leqs S_k^{11}(D^2u).
\end{align*}
By using $\sum S_k^{ii}=(n-k+1)S_{k-1}$, we obtain $F^{11}\geqs\theta\mathcal{F}$ for some $\theta=\theta(n,k)>0$.

To estimate $\mathcal{F}$ from below, we assume that $D^2u$ is diagonal with the new coordinates $y$ by a rotation, and $u_{y_1y_1}\geqs\cdots\geqs u_{y_ny_n}$. Then by \eqref{formu17}, we have
\begin{align}
u_{y_ny_n}\leqs u_{x_1x_1}\leqs-\frac{1}{4M}u_{x_1}^2<0
\label{formu18}
\end{align}
holds at $x_0$. Since $D^2u$ is diagonal,
\begin{align*}
0\leqs S_k(D^2u)=u_{y_ny_n}\sigma_{k-1;n}(\lambda)+\sigma_{k;n}(\lambda)\quad\text{ for }\lambda=\lambda(D^2u).
\end{align*}
By Maclaurin inequality, it follows that
\begin{align*}
0\leqs u_{y_ny_n}\sigma_{k-1;n}(\lambda)+C[\sigma_{k-1;n}(\lambda)]^{k/(k-1)}.
\end{align*}
Therefore, by \eqref{formu18} we obtain
\begin{align*}
\sigma_{k-1;n}(\lambda)\geqs C|u_{y_ny_n}|^{k-1}\geqs C\frac{u_{x_1}^{2k-2}}{M^{k-1}},
\end{align*}
and hence by our assumption $\rho(x_0)u_{x_1}(x_0)>C_1M$ and $s>-1$,
\begin{align}
\mathcal{F}\geqs F^{y_ny_n}=\frac1k \psi^{1-k}\sigma_{k-1;n}\geqs C\frac{\rho^{2k-2}u_{x_1}^{2k-2}}{M^{k-1}}\geqs C'M^{k-1}.
\label{formu19}
\end{align}

Since $f$ is Lipschitz continuous, we have by direct computation
\begin{align*}
|\partial_1\psi(x_0)|\leqs C\big(1+|x_0|^{2s-1})+C\big(1+|x_0|^{2s}\big)u_{x_1}(x_0).
\end{align*}
Then multiplying \eqref{formu16} by $\rho^2\mathcal{F}^{-1}M^{-3}$, we obtain
\begin{align*}
0\geqs-\frac{\widetilde C\rho^3\mathcal{F}^{-1}|\partial_1\psi|}M+\theta \left(\frac{\rho u_1}M\right)^3-C\left[\left(\frac{\rho u_1}M\right)^2+\frac{\rho u_1}M\right].
\end{align*}
Using $s>-1$ and \eqref{formu19}, we conclude that $\rho u_1\leqs K$ at $x_0$. This completes the proof.
\end{proof}

Next, we will establish the uniform estimates for second derivatives.
\begin{thm}\label{thm12}
Let $\Omega$ be a strictly $(k-1)$-convex bounded domain containing the origin with the boundary $\partial\Omega\in C^{3,1}$. Let $u\in C^{3,1}(\Omega)\cap C^{3}(\overline\Omega)$ be a k-admissible solution of \eqref{formu11} vanishing on the boundary. Suppose that $f\in C^{1,1}(\overline\Omega\times\mathbb{R})$ is a nonnegative function. Then it holds that
\begin{align}
\sup_{\Omega}|x|^{2\beta}|D^2u(x)|\leqs L,
\label{formu110}
\end{align}
where $\beta>1$ and the constant $L$ depends on $n,k,s,\beta,\Omega,\Vert u\Vert_{L^\infty(\Omega)},\Vert f\Vert_{C^{1,1}}$ and the constant $K$ in Theorem \ref{thm11}.
\end{thm}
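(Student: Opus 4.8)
The plan is to reduce to an interior estimate by means of the boundary $C^2$ bounds, and then to run a Pogorelov–type argument in which the origin — where the weight $(|x|^2+\delta^2)^s$ is singular for $s<0$ — plays the role that $\partial\Omega$ plays in the classical argument. First I would recall the boundary estimate: since $\partial\Omega\in C^{3,1}$ is strictly $(k-1)$-convex and, on a fixed neighbourhood of $\partial\Omega$, the origin is excluded so that $(|x|^2+\delta^2)^s$ is bounded in $C^{1,1}$ uniformly in $\delta$, the right–hand side $\psi:=(|x|^2+\delta^2)^sf$ has $\psi^{1/k}\in C^{1,1}$ near $\partial\Omega$ with bounds depending only on the admissible data, so the barrier argument of \cite{CNS85Dirichlet} and \cite{Wangnote} gives $\sup_{\partial\Omega}|D^2u|\le C$ uniformly in $\delta$. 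Since $\rho=|x|$ is bounded on $\overline\Omega$ this controls $|x|^{2\beta}|D^2u|$ on $\partial\Omega$, and since $\lambda(D^2u)\in\overline\Gamma_k\subset\overline\Gamma_1$ forces $|D^2u|\le C(n)\lambda_{\max}(D^2u)$, it remains only to bound $|x|^{2\beta}\lambda_{\max}(D^2u)$ at an interior maximum.

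For this I would consider the auxiliary function
\[
W(x)=\rho(x)^{2\beta}\,e^{\frac{\mu}{2}\rho(x)^2|Du(x)|^2}\,(M-u(x))^{-1}\,\lambda_{\max}(D^2u(x)),\qquad M=2\sup_\Omega|u|+1,
\]
with $\mu>0$ to be fixed. By Theorem~\ref{thm11} the exponential factor is $\le e^{\mu K^2/2}$, uniformly in $\delta$, and since $\rho^{2\beta}$ vanishes at the origin while $\lambda_{\max}>0$ throughout $\Omega$, an interior maximum point $x_0$ of $W$ must satisfy $x_0\ne0$. Rotating so that $D^2u(x_0)$ is diagonal and picking a unit $e_1$ with $u_{11}(x_0)=\lambda_{\max}(x_0)$, the $C^2$ function $V=\log\big(\rho^{2\beta}e^{\frac{\mu}{2}\rho^2|Du|^2}(M-u)^{-1}u_{11}\big)$ has an interior maximum at $x_0$, so $V_i(x_0)=0$ and $F^{ij}V_{ij}(x_0)\le0$; at that point $(F^{ij})$ is also diagonal.

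Expanding this inequality I would use $F^{ij}u_{ij}=\psi$, $F^{ij}u_{ij1}=\partial_1\psi$ (one differentiation of $F(D^2u)=\psi$), $F^{ij}u_{ij11}=\partial_{11}\psi-F^{ij,pq}u_{ij1}u_{pq1}\ge\partial_{11}\psi$ (a second differentiation together with the concavity of $F=S_k^{1/k}$ from Section~2), and the lower bound $\mathcal F:=\sum_iF^{ii}\ge c_0(n,k)>0$ coming from $\det(F^{ij})\ge C(n,k)$. The term generated by $e^{\frac{\mu}{2}\rho^2|Du|^2}$ produces the dominant good term $\mu\rho^2\sum_iF^{ii}u_{ii}^2\ge\mu\rho^2F^{11}u_{11}^2$ (the accompanying cross terms absorbed by Cauchy–Schwarz and $\rho|Du|\le K$), while the $(M-u)^{-1}$ term contributes nonnegatively. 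Against these sit the ``bad'' terms: the third–order term $F^{ij}u_{11i}u_{11j}/u_{11}^2$, controlled by the usual splitting of indices and by replacing some third derivatives through $F^{ij}u_{ij1}=\partial_1\psi$; the term $\partial_{11}\psi/u_{11}$; and the terms from the cutoff $\rho^{2\beta}$. When the least eigenvalue satisfies $\lambda_n\le-\theta u_{11}$, one moreover gets from $\sigma_k\ge0$ and the Maclaurin inequality of Proposition~\ref{pro21} the extra bound $\mathcal F\ge F^{nn}\ge c\,\psi^{1-k}u_{11}^{k-1}$, useful to absorb terms of lower order in $u_{11}$.

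The hard part will be handling the bad terms without the strict positivity of $\psi$: in \cite{CW01variational,WZ21complex} one divides by $\psi$ to control $\partial_{11}\psi/u_{11}$ and to close the third–order term, but here $f$, hence $\psi$, may vanish. To replace this I would retain the full cutoff contribution
\[
F^{ij}(\log\rho^{2\beta})_{ij}=\frac{2\beta}{\rho^2}\Big(\mathcal F-\frac{2}{\rho^2}F^{ij}x_ix_j\Big),
\]
and exploit the uniform convexity of $x\mapsto|x|^2$ (Hessian $2I$) to read off from it a genuine positive quantity of size $\mathcal F/\rho^2$; the singular pieces of $\partial_i\psi,\partial_{ij}\psi$ — carrying factors $(|x|^2+\delta^2)^{s-1},(|x|^2+\delta^2)^{s-2}$ times $f,Df,D^2f$ — I would dominate using the quantitative nonnegativity $|Df|^2\le2\Vert f\Vert_{C^{1,1}}f$ valid for nonnegative $f\in C^{1,1}$, together with $\rho|Du|\le K$. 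Because $s>-1$ and $\beta>1$ the relevant exponent balances (such as $2\beta+s>1$ and $4\beta+2s>2$) hold, so after multiplying by $\rho^{2\beta}$, and at a point where $W(x_0)$ is assumed large, every occurrence of the singular weight is absorbed by a positive power of $\rho$ — this is exactly where one needs $\beta>1$ rather than merely $\beta>0$. Balancing the good term $\mu\rho^2F^{11}u_{11}^2$ against the remainder (after multiplying the inequality by $\rho^{2\beta}\mathcal F^{-1}$ and choosing $\mu$ suitably) then forces $\rho(x_0)^{2\beta}u_{11}(x_0)\le C$, hence $W(x_0)\le L$, which is the claim. Two routine points remain: justifying the computation at $x_0$ when $u$ is only $C^{3,1}$ (by the usual approximation, noting that for $\delta>0$ the solution of \eqref{formu11} is in fact smooth wherever $\psi>0$), and dealing with points where $\psi=0$ (which do not arise in the eigenvalue application $f=|\lambda u|$ with $u<0$).
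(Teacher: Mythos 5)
Your overall framework (boundary $C^2$ bounds away from the origin, then a Pogorelov-type interior maximum with cutoff $|x|^{2\beta}$ and a gradient factor controlled via $|x||Du|\le K$) matches the paper, but the core of the argument has two genuine gaps where the proposed steps would fail.

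First, the positive term you plan to extract from the cutoff is not there. With $\rho=|x|$ you correctly compute
\begin{align*}
F^{ij}\big(\log\rho^{2\beta}\big)_{ij}=\frac{2\beta}{\rho^{2}}\Big(\mathcal F-\frac{2}{\rho^{2}}F^{ij}x_ix_j\Big),
\end{align*}
but this quantity has indefinite sign: at a diagonal point $\frac{2}{\rho^2}F^{ij}x_ix_j$ can be as large as $2\max_iF^{ii}$, which exceeds $\mathcal F$ whenever the largest $F^{ii}$ carries more than half the trace and $x$ is roughly aligned with that eigendirection. So the ``convexity of $|x|^2$'' does not by itself hand you a good term of size $\mathcal F/\rho^2$; it is a priori only bounded below by $-C_\beta\mathcal F/\rho^2$. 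In the paper the positive term $2\beta\mathcal F|x|^{-2}$ survives only in the second case of a dichotomy, after the first-order critical equations for $i\ge2$ are used to trade the $-\rho_i^2/\rho^2$ contributions into an enlarged third-order term, and that enlarged term is then absorbed by the off-diagonal concavity terms $\mu'\sigma_{k-2;1i}u_{11i}^2$ via Lemma 3.1 of \cite{CW01variational}; this is exactly where $\beta>1$ (choosing $0<\nu<\beta-1$) is used, not the exponent bookkeeping for the weight that you invoke.

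Second, and relatedly, your case-free absorption scheme cannot close for $1\le k<n$. Discarding all of $-F^{ij,pq}u_{ij1}u_{pq1}$ by concavity throws away precisely the positive terms $\mu'\sigma_{k-2;ij}u_{ij1}^2$ needed above, and your alternative (substituting $V_i=0$ and Cauchy–Schwarz) leaves residual bad terms of size $C\mathcal F/\rho^2$ plus $C(|x|^2+\delta^2)^s$, while the only remaining good term is $\mu\rho^2F^{11}u_{11}^2$. Since $F^{11}=\frac1k\psi^{1-k}\sigma_{k-1;1}$ corresponds to the largest eigenvalue, the ratio $F^{11}/\mathcal F$ can be arbitrarily small, so no choice of $\mu$ or of the multiplier ($\rho^{2\beta}\mathcal F^{-1}$ or $\rho^{2\beta}(F^{11})^{-1}$) bounds $\rho^{2\beta}u_{11}$ from such an inequality. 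The paper resolves this with the Chou–Wang dichotomy: in Case 1 ($u_{kk}\ge\varepsilon u_{11}$) one has $\sum_iF^{ii}u_{ii}^2\ge\varepsilon^2\theta_{n,k}\mathcal F u_{11}^2$, so the good term is $\mathcal F$-proportional; in Case 2 the third-order terms are killed by Lemma 3.1 of \cite{CW01variational}, the remaining $F^{11}$-proportional bad term is absorbed by $|x|^2F^{11}u_{11}^2$, and the $\mathcal F$-proportional bad terms must be dominated by $2\beta\mathcal F|x|^{-2}$ — which is arranged through the further two-step device (first $\beta\ge\Theta$ large so the inequality holds on all of $\Omega$; then for $1<\beta<\Theta$ choose $a$ small, restrict to a small ball $B_{r_0}$ where $\beta\mathcal F|x|^{-2}\ge C(|x|^2+\delta^2)^s$ using $\mathcal F\ge C_{n,k}$ and $s>-1$, and patch with the $\beta=\Theta$ estimate outside $B_{r_0}$). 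None of this structure appears in your proposal, and without it the argument does not go through; the auxiliary-function cosmetics (exponential versus power gradient factor, the extra $(M-u)^{-1}$) are immaterial by comparison.
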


\begin{proof}
By the same argument as in \cite{CNS85Dirichlet} and \cite{Wang94class}, we have
\begin{align*}
\sup_{\partial\Omega}|D^2u|\leqs\tilde L,
\end{align*}
where the constant $\tilde L$ depends only on $n,k,s,f$ and $\Omega$. To establish the global estimate \eqref{formu110}, we first rewrite the equation \eqref{formu11} as
\begin{align*}
F(D^2u)=(|x|^2+\delta^2)^sf(x,u):=\psi(x,u).
\end{align*}
Differentiating this equation with respect to $x_{\g}$, we obtain
\begin{align}
F^{ij}u_{ij\g}=\psi_{\g},\quad F^{ij}u_{ij\g\g}+F^{ij,pq}u_{ij\g}u_{pq\g}=\psi_{\g\g}.
\label{formu111}
\end{align}
Since $f(x,u)\in C^{1,1}(\overline\Omega\times\mathbb{R})$, by direct calculation we obtain
\begin{align}
|\psi_{\g}|&=\Big|(|x|^2+\delta^2)^s\big[f_{x_{\g}}+f_uu_{\g}\big]+2sx_{\g}(|x|^2+\delta^2)^{s-1}f\Big| \nonumber\\
&\leqs C_1(|x|^2+\delta^2)^s|x|^{-1},
\label{formu112}
\end{align}
and
\begin{align}
|\psi_{\g\g}|=&\Big|(|x|^2+\delta^2)^s\big[f_{x_{\g}x_{\g}} +2f_{x_{\g}u}u_{\g}+f_{uu}u_{\g}^2+f_uu_{\g\g}\big]+2s(|x|^2+\delta^2)^{s-1}f \nonumber\\
&+4sx_{\g}(|x|^2+\delta^2)^{s-1}\big[f_{x_{\g}}+f_uu_{\g}\big]
+4s(s-1)x_{\g}^2(|x|^2+\delta^2)^{s-2}f\Big| \nonumber\\
\leqs&C_2(|x|^2+\delta^2)^s|u_{\g\g}|+C_3(|x|^2+\delta^2)^s|x|^{-2},
\label{formu113}
\end{align}
where we utilize the gradient estimate \eqref{formu12} to yield the inequalities.

When $D^2u$ is diagonal at a given point, we have
\begin{equation*}
F^{ij,pq}=\left\{
\begin{array}{ll}
\mu'\sigma_{k-2;ip}+\mu''\sigma_{k-1;i}\sigma_{k-1;p}&\text{if }i=j,p=q,\\
-\mu'\sigma_{k-2;ij}&\text{if }i\neq j,i=q,\text{ and }j=p,\\
0&\text{otherwise,}
\end{array}
\right.
\end{equation*}
where $\mu(t)=t^{1/k}$. Hence, it follows that
\begin{align}
F^{ii}u_{ii\g\g}=\psi_{\g\g}+\sum_{i,j=1}^n \mu'\sigma_{k-2;ij}u_{ij\g}^2-\sum_{i,j=1}^n [\mu'\sigma_{k-2;ij}+\mu''\sigma_{k-1;i}\sigma_{k-1;j}]u_{ii\g}u_{jj\g}.
\label{formu114}
\end{align}

For $\beta>1$, consider the auxiliary function
\begin{align*}
G(x,\xi)=\rho^{\beta}(x)\varphi\Big(\frac12|x|^2|Du|^2\Big)u_{\xi\xi},
\end{align*}
where $\rho(x)=|x|^2$, $\varphi(t)=(1-t/M)^{-a}$ with the constants $M=K^2+1$ and $0<a<1/2$ to be determined later. Suppose $G$ attains its maximum at $x=x_0\in\Omega$ and $\xi=e_1$. We also assume $x_0\neq0$, otherwise it is a trivial case. By a rotation of coordinates, we assume that $D^2u$ is diagonal at $x_0$ with $u_{11}\geqs\cdots\geqs u_{nn}$. It holds at $x_0$ that $(\log G)_i=0$ and $(\log G)_{ii}\leqs0$, which yield that
\begin{align}
0=(\log G)_i=\beta\frac{\rho_i}{\rho}+\frac{\varphi_i}{\varphi}+\frac{u_{11i}}{u_{11}},
\label{formu115}
\end{align}
and
\begin{align}
0\geqs F^{ii}(\log G)_{ii}=\beta F^{ii}\big(\frac{\rho_{ii}}{\rho}-\frac{\rho_i^2}{\rho^2}\big)+F^{ii}\big(\frac{\varphi_{ii}}{\varphi}-\frac{\varphi_i^2}{\varphi^2}\big)+F^{ii}\big(\frac{u_{11ii}}{u_{11}}-\frac{u_{11i}^2}{u_{11}^2}\big).
\label{formu116}
\end{align}

Next, we consider the following two cases separately.
\\
\textbf{Case 1.} $u_{kk}\geqs\varepsilon u_{11}$ for some $\varepsilon>0$.

By \eqref{formu115}, we have
\begin{align}
\frac{u_{11i}}{u_{11}}=-\beta\frac{\rho_i}{\rho}-\frac{\varphi_i}{\varphi}.
\label{formu117}
\end{align}
Hence putting \eqref{formu117} into \eqref{formu116} yields
\begin{align}
0\geqs\beta F^{ii}\big(\frac{\rho_{ii}}{\rho}-(1+2\beta)\frac{\rho_i^2}{\rho^2}\big)+F^{ii}\big(\frac{\varphi_{ii}}{\varphi}-3\frac{\varphi_i^2}{\varphi^2}\big)+F^{ii}\frac{u_{11ii}}{u_{11}}.
\label{formu118}
\end{align}
By the concavity of $F$ and \eqref{formu113}, we have
\begin{align}
F^{ii}\frac{u_{11ii}}{u_{11}}\geqs\frac{\psi_{11}}{u_{11}}\geqs-C_2(|x|^2+\delta^2)^s-\frac{C_3(|x|^2+\delta^2)^s}{|x|^2u_{11}}\geqs-C(|x|^2+\delta^2)^s,
\label{formu119}
\end{align}
provided $|x|^2u_{11}\geqs1$ at $x_0$. Next by direct calculation, we have
\begin{align*}
\varphi'=\frac{a}{M}\Big(1-\frac{t}{M}\Big)^{-a-1}\quad \text{and}\quad \varphi''=\frac{a(a+1)}{M^2}\Big(1-\frac{t}{M}\Big)^{-a-2}.
\end{align*}
Thus for any $r\in(2,+\infty)$, it holds that
\begin{align}
\frac{\varphi''}{\varphi}-r\frac{\varphi'^2}{\varphi^2} =\frac{a(a+1-ra)}{M^2}\Big(1-\frac{t}{M}\Big)^{-2}\geqs0,
\label{formu120}
\end{align}
provided $a<(r-1)^{-1}$. To continue, we compute (note that $D^2u$ is diagonal)
\begin{equation}
\begin{gathered}
\varphi_i=\varphi'\big(|x|^2u_ju_{ij}+x_i|Du|^2\big),\\
\varphi_{ii}=\varphi''\big(|x|^2u_ju_{ij}+x_i|Du|^2\big)^2+\varphi'\Big[|x|^2(u_{ii}^2+u_{\g}u_{ii\g})+4x_iu_iu_{ii}+|Du|^2\Big].
\end{gathered}
\label{formu121}
\end{equation}
Therefore, we have
\begin{align}
F^{ii}\big(\frac{\varphi_{ii}}{\varphi}-r\frac{\varphi_i^2}{\varphi^2}\big)=&\frac{\varphi'}{\varphi}F^{ii}\Big[|x|^2(u_{ii}^2+u_{\g}u_{ii\g})+4x_iu_iu_{ii}+|Du|^2\Big]  \nonumber\\
&\qquad\qquad+\big(\frac{\varphi''}{\varphi}-r\frac{\varphi'^2}{\varphi^2}\big)F^{ii}\big(|x|^2u_ju_{ij}+x_i|Du|^2\big)^2  \nonumber\\
\geqs&\frac{\varphi'}{2\varphi}|x|^2F^{ii}u_{ii}^2+\frac{\varphi'}{\varphi}|x|^2u_{\g}F^{ii}u_{ii\g}-\frac{8\varphi'}{\varphi}F^{ii}u_i^2+\frac{\varphi'}{\varphi}F^{ii}|Du|^2  \nonumber\\
\geqs&\frac{\varphi'}{2\varphi}|x|^2F^{ii}u_{ii}^2+\frac{\varphi'}{\varphi}|x|^2u_{\g}\psi_{\g}-\frac{7\varphi'}{\varphi}F^{ii}u_i^2,
\label{formu122}
\end{align}
if $a$ is given smaller than $(r-1)^{-1}$. Using $\sigma_{k-1;k}\geqs \theta_{n,k}\sum_i\sigma_{k-1;i}$ for some $\theta_{n,k}>0$, we obtain
\begin{align*}
F^{ii}u_{ii}^2>F^{kk}u_{kk}^2\geqs\varepsilon^2\theta_{n,k}\mathcal{F}u_{11}^2,
\end{align*}
where $\mathcal{F}=\sum F^{ii}$. Together with \eqref{formu12} and \eqref{formu112}, we have by \eqref{formu122}
\begin{align*}
F^{ii}\big(\frac{\varphi_{ii}}{\varphi}-r\frac{\varphi_i^2}{\varphi^2}\big)\geqs \tilde\theta|x|^2\mathcal{F}u_{11}^2-\tilde C_1(|x|^2+\delta^2)^s-C_4\mathcal{F}|x|^{-2},
\end{align*}
where $\tilde\theta$ depends on $n,k,\varepsilon,a$ and $M$. Finally, by our choice of $\rho$, we have
\begin{align*}
\beta F^{ii}\big(\frac{\rho_{ii}}{\rho}-(1+2\beta)\frac{\rho_i^2}{\rho^2}\big)\geqs-C_{\beta}\mathcal{F}|x|^{-2}.
\end{align*}
Inserting the above inequalities to \eqref{formu118} with $r=3$, we obtain at $x_0$
\begin{align}
0\geqs\tilde\theta|x|^2\mathcal{F}u_{11}^2-C_{\beta}\mathcal{F}|x|^{-2}-C(|x|^2+\delta^2)^s.
\label{formu123}
\end{align}
Using the arithmetic and geometric mean inequality, we have
\begin{align}
\mathcal{F}=\sum_{i=1}^nF^{ii}\geqs n\left(\prod_{i=1}^nF^{ii}\right)^{\frac1n}\geqs C_{n,k}>0.
\label{formu124}
\end{align}
By multiplying \eqref{formu123} by $\rho^{2\beta-1}\varphi^2\mathcal{F}^{-1}$ and using $s>-1$, we can deduce that $G(x_0)\leqs L$ holds for some constant $L$ depending on $n,k,s,\varepsilon,a,\beta,f,M$ and $\Omega$.
\\[4pt]
\textbf{Case 2.} $u_{kk}<\varepsilon u_{11}$ (and so $|u_{jj}|<\varepsilon u_{11}$ for $j=k,k+1,\ldots,n$).

In this case, we have by \eqref{formu115}
\begin{align}
\frac{\rho_i}{\rho}=-\frac1{\beta}\big(\frac{\varphi_i}{\varphi}+\frac{u_{11i}}{u_{11}}\big),\quad i=2,\ldots,n.
\label{formu125}
\end{align}
Applying \eqref{formu117} for $i=1$ and \eqref{formu125} for $i=2,\ldots,n$ to \eqref{formu116}, we obtain
\begin{align}
0\geqs& \left\{\sum_{i=1}^n\Big[\beta F^{ii}\frac{\rho_{ii}}{\rho}+F^{ii}\big(\frac{\varphi_{ii}}{\varphi}-C_{\nu}\frac{\varphi_i^2}{\varphi^2}\big)\Big]-\beta(1+2\beta)F^{11}\frac{\rho_1^2}{\rho^2}\right\} \nonumber\\
&+\left\{\sum_{i=1}^n F^{ii}\frac{u_{11ii}}{u_{11}}-\big(1+\frac{1+\nu}{\beta}\big)\sum_{i=2}^n F^{ii}\frac{u_{11i}^2}{u_{11}^2}\right\}:=I_1+I_2,
\label{formu126}
\end{align}
where $\nu<\beta-1$. By \eqref{formu120}$\sim$\eqref{formu122}, we can similarly deduce
\begin{align*}
F^{ii}\big(\frac{\varphi_{ii}}{\varphi}-C_{\nu}\frac{\varphi_i^2}{\varphi^2}\big)\geqs\frac{\varphi'}{2\varphi}|x|^2F^{ii}u_{ii}^2-\frac{\varphi'}{\varphi}\tilde C_1(|x|^2+\delta^2)^s-\frac{\varphi'}{\varphi}C_4\mathcal{F}|x|^{-2},
\end{align*}
given $a<(C_{\nu}-1)^{-1}$. Thus by $\rho_{ii}=2$ for $i=1,\ldots,n$, we obtain
\begin{align}
I_1\geqs\frac{2\beta\mathcal{F}}{\rho}+\frac{\varphi'}{2\varphi}|x|^2F^{ii}u_{ii}^2-\frac{\varphi'}{\varphi}\tilde C_1(|x|^2+\delta^2)^s-\frac{\varphi'}{\varphi}C_4\mathcal{F}|x|^{-2}-C_{\beta}F^{11}|x|^{-2}.
\label{formu127}
\end{align}
For $I_2$, we notice that by the concavity of $F$,
\begin{align*}
-\sum_{i,j=1}^n [\mu'\sigma_{k-2;ij}+\mu''\sigma_{k-1;i}\sigma_{k-1;j}]u_{ii1}u_{jj1}=-\sum\frac{\partial^2}{\partial\lambda_i\partial\lambda_j}\mu(S_k(\lambda))u_{ii1}u_{jj1}\geqs0.
\end{align*}
Hence by \eqref{formu114},
\begin{align*}
u_{11}I_2&\geqs \psi_{11}+\sum_{i,j=1}^n\mu'\sigma_{k-2;ij}u_{ij1}^2-\big(1+\frac{1+\nu}{\beta}\big)\sum_{i=2}^n F^{ii}\frac{u_{11i}^2}{u_{11}}  \nonumber\\
&\geqs \psi_{11}+\sum_{i=2}^n\mu'\Big(2\sigma_{k-2;1i}-\big(1+\frac{1+\nu}{\beta}\big)\frac{\sigma_{k-1;i}}{u_{11}}\Big)u_{11i}^2.
\end{align*}
Since $\nu<\beta-1$, by Lemma 3.1 in \cite{CW01variational}, there exists a uniform constant $\varepsilon=\varepsilon(\beta,\nu)>0$ such that if $|\lambda_j|<\varepsilon\lambda_1$ for $j=k,\ldots,n$, then
\begin{align*}
\sigma_{k-2;1i}(\lambda)\geqs\Big(1-\frac{\beta-1-\nu}{2\beta}\Big)\frac{\sigma_{k-1;i}(\lambda)}{\lambda_1}.
\end{align*}
Thus, by \eqref{formu113} we obtain
\begin{align}
I_2\geqs\frac{\psi_{11}}{u_{11}}\geqs-C_2(|x|^2+\delta^2)^s-\frac{C_3(|x|^2+\delta^2)^s}{|x|^2u_{11}}\geqs-C(|x|^2+\delta^2)^s,
\label{formu128}
\end{align}
provided $|x|^2u_{11}\geqs1$ at $x_0$. Combining \eqref{formu126}$\sim$\eqref{formu128}, we have
\begin{align}
0\geqs \frac{\varphi'}{2\varphi}|x|^2F^{11}u_{11}^2+2\beta\mathcal{F}|x|^{-2}-C(|x|^2+\delta^2)^s-\frac{\varphi'}{\varphi}C_4\mathcal{F} |x|^{-2}-C_{\beta}F^{11}|x|^{-2}.
\label{formu129}
\end{align}
Assuming for a moment that at $x_0$ we have
\begin{align}
J:=2\beta\mathcal{F}|x|^{-2}-C(|x|^2+\delta^2)^s-\frac{\varphi'}{\varphi}C_4\mathcal{F} |x|^{-2}\geqs0,
\label{formu130}
\end{align}
then multiplying \eqref{formu129} by $\rho^{2\beta-1}\varphi^2 (F^{11})^{-1}$, we can deduce that $G(x_0)\leqs L$ holds for some constant $L$ depending on $n,k,s,a,\beta,f,M$ and $\Omega$.
\\

Next, we divide two steps to prove \eqref{formu130}. Notice that $\varphi'/\varphi\in[a/M,2a/M]$.
\\
\textbf{Step 1.}
We first assume $\beta\geqs\Theta$ for some $\Theta$ suitably large such that
\begin{align*}
J\geqs2\Theta\mathcal{F}|x|^{-2}-C(|x|^2+\delta^2)^s-\frac{\varphi'}{\varphi}C_4\mathcal{F}|x|^{-2}>0
\end{align*}
holds for every $x\in\Omega$. Moreover, $\Theta$ depends only on $n,k,s,f,K$ and $\Omega$, thanks to \eqref{formu124} and $s>-1$. We then take $0<\nu<\Theta-1$ and $\varepsilon=\varepsilon(\beta,\nu)>0$ fixed. Set $0<a<1/2$ such that $a<(C_\nu-1)^{-1}$ holds for $C_\nu$ in \eqref{formu126}. Hence, combining Case 1 and Case 2, we deduce the estimate \eqref{formu110} $|x|^{2\beta}|D^2u(x)|\leqs L_\beta$ for $\beta\geqs\Theta$.
\\[4pt]
\textbf{Step 2.} For a general $1<\beta<\Theta$, we first take $0<\nu<\beta-1$ and $\varepsilon=\varepsilon(\beta,\nu)>0$ fixed. Then, choose $a>0$ sufficiently small such that $2aC_4/M\leqs\beta$ and $a<(C_\nu-1)^{-1}$ hold for $C_\nu$ in \eqref{formu126}. Next, we select a $r_0>0$ small so that
\begin{align*}
\beta\mathcal{F}|x|^{-2}-C(|x|^2+\delta^2)^s\geqs0 \quad\text{for }x\in B_{r_0},
\end{align*}
uniformly for $\delta>0$ small. Indeed, $r_0=(\beta C_{n,k}/2C)^{1/(2+2s)}$ satisfies the above condition with the constant $C_{n,k}$ in \eqref{formu124}. Therefore, we obtain \eqref{formu130} holds for every $x\in B_{r_0}$.

Repeat the procedure in the proof with the constants $a,\varepsilon,\nu$ given above. Assume $G(x,\xi)$ attains its maximum at a interior point $x_0$. If $x_0\in B_{r_0}$, then both Case 1 and Case 2 hold and hence we can deduce the estimate \eqref{formu110} $|x|^{2\beta}|D^2u(x)|\leqs L_\beta$. Otherwise if $x_0\in B_{r_0}^c$, then it follows at $x_0$
\begin{align*}
|x|^{2\beta}|D^2u(x)|\leqs r_0^{2\beta-2\Theta}|x|^{2\Theta} |D^2u(x)|\leqs r_0^{2\beta-2\Theta}L_{\Theta},
\end{align*}
where the last inequality follows from the case $\beta=\Theta$. Hence, the choice $L_\beta=Cr_0^{2\beta-2\Theta}L_{\Theta}$ satisfies the estimate \eqref{formu110}. This completes the proof.
\end{proof}

\begin{remark}\label{rmk12}
If $f\in C^{1,1}(\overline\Omega\times\mathbb{R})$ and is positive inside $\Omega$, then by Theorem \ref{thm11} and \ref{thm12}, and by the interior regularity theory of nonlinear elliptic equation, we have
\begin{align*}
\Vert u\Vert_{C^{3+\alpha}(\Omega')}\leqs C(\Omega'),
\end{align*}
where $\Omega'\Subset \Omega\setminus\{0\}$ and the constant $C(\Omega')$ is independent of $\delta$.
\end{remark}

To establish the regularity at the origin uniformly for $\delta>0$, we utilize the local H\"older estimates for Hessian equation studied in \cite{Ld02potential,TW97measure} to obtain
\begin{thm}\label{thm13}
Suppose the same conditions as in Theorem \ref{thm11}. Then there exists a constant $\alpha\in(0,1)$ independent of $\delta$ such that
\begin{align}
\Vert u\Vert_{C^{\alpha}(\overline\Omega)}\leqs C,
\label{formu131}
\end{align}
where $C$ depends on $n,k,s,\Omega,\Vert u\Vert_{L^\infty(\Omega)},f$ and the constant $K$ in Theorem \ref{thm11}.
\end{thm}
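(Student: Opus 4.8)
The strategy is to combine the global gradient bound of Theorem \ref{thm11}, which is of Lipschitz type away from the origin but degenerates at $0$, with an interior H\"older estimate for $k$-Hessian equations having an $L^q$ right-hand side in a small ball centred at $0$, and then to patch the two regions together using only the $L^\infty$ bound on $u$. Fix $r_0>0$ with $B_{4r_0}(0)\subset\Omega$. It suffices to produce, for some $\alpha\in(0,1)$ and uniformly in $\delta$, a bound for $\Vert u\Vert_{C^\alpha(B_{2r_0})}$ and for $\Vert u\Vert_{C^{0,1}(\overline\Omega\setminus B_{r_0})}$: indeed, if $x\in B_{r_0}$ and $y\in\overline\Omega\setminus B_{2r_0}$ then $|x-y|\geqs r_0$, so $|u(x)-u(y)|\leqs 2\Vert u\Vert_{L^\infty(\Omega)}\leqs 2\Vert u\Vert_{L^\infty(\Omega)}r_0^{-\alpha}|x-y|^\alpha$, while pairs of points lying in a common region are controlled by the estimate on that region. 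Note that $|x||Du|\leqs K$ alone does not give even continuity at the origin, as integrating it along a ray through $0$ leads to a logarithmically divergent integral; the equation has to be used there.

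\textbf{Away from the origin.} By Theorem \ref{thm11} and $u\in C^1(\overline\Omega)$ one has $|Du(x)|\leqs K/|x|\leqs K/r_0$ for every $x\in\overline\Omega\setminus B_{r_0}$. Since $\partial\Omega\in C^3$ and $B_{2r_0}(0)\subset\Omega$, the intrinsic length metric of $\overline\Omega\setminus B_{r_0}$ is comparable to the Euclidean distance, with a constant depending only on $\Omega$ and $r_0$; hence $u$ is Lipschitz on $\overline\Omega\setminus B_{r_0}$ with constant depending only on $n,\Omega,r_0$ and $K$.

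\textbf{Near the origin.} Write the right-hand side of \eqref{formu11} as $g_\delta:=(|x|^2+\delta^2)^{sk}f(x,u)^k$. Since $|u|\leqs\Vert u\Vert_{L^\infty(\Omega)}$ and $f$ is continuous on $\overline\Omega\times\mathbb{R}$, we have $0\leqs f(x,u)\leqs F_0:=\sup\{f(x,t):x\in\overline\Omega,\,|t|\leqs\Vert u\Vert_{L^\infty(\Omega)}\}<\infty$, so $0\leqs g_\delta\leqs F_0^k(|x|^2+\delta^2)^{sk}$. If $s\geqs0$, then for $0<\delta\leqs1$ one has $g_\delta\leqs C(\Omega,s)$ on $B_{4r_0}$, so $g_\delta$ is bounded in $L^\infty(B_{4r_0})$ uniformly in $\delta$. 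If $s\in(-s_0,0)$, then $s_0\leqs1$ gives $|s|<1$; since $sk<0$ we obtain $g_\delta\leqs F_0^k|x|^{2sk}$ on $B_{4r_0}$, and $|x|^{2sk}\in L^q(B_{4r_0})$ for every $q<n/(2k|s|)$, a range that contains values exceeding $n/(2k)$ precisely because $|s|<1$. Fixing such a $q$ with $n/(2k)<q<n/(2k|s|)$ yields $\Vert g_\delta\Vert_{L^q(B_{4r_0})}\leqs C$ uniformly in $\delta$. In either case we invoke the interior H\"older estimate for $k$-admissible solutions of $S_k(D^2u)=g_\delta$ with $g_\delta\in L^q(B_{4r_0})$, $q>n/(2k)$ (see Labutin \cite{Ld02potential} and Trudinger--Wang \cite{TW97measure}), which provides a constant $\alpha=\alpha(n,k,s)\in(0,1)$ with
\begin{align*}
\Vert u\Vert_{C^\alpha(B_{2r_0})}\leqs C\Big(\Vert u\Vert_{L^\infty(B_{4r_0})}+\Vert g_\delta\Vert_{L^q(B_{4r_0})}^{1/k}\Big)\leqs C
\end{align*}
uniformly in $\delta$. (When $k=n$ one may instead use simply that $u$ is convex, hence Lipschitz on $B_{2r_0}$ with constant $\leqs C(r_0)\Vert u\Vert_{L^\infty(B_{4r_0})}$.) Combining with the estimate away from the origin and the $L^\infty$ matching described above, we obtain \eqref{formu131}.

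\textbf{Main difficulty.} The only substantial point is the near-origin estimate when $s<0$: there $g_\delta$ is genuinely singular at $0$, and to apply the potential (Wolff-type) estimates one must know that its integrability exponent can be pushed strictly above the critical exponent $n/(2k)$ of the $k$-Hessian operator. This is exactly what the hypothesis $s>-s_0$ secures, since $s_0\leqs1$ forces $|s|<1$. Moreover the bound $g_\delta\leqs F_0^k|x|^{2sk}$ does not involve $\delta$, so all constants are uniform in $\delta$; and since $F_0$ depends only on $f$ and $\Vert u\Vert_{L^\infty(\Omega)}$ while $r_0$ depends only on $\Omega$, the resulting constant $C$ has the asserted dependence.
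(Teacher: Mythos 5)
Your overall scheme coincides with the paper's: the gradient bound $\sup_\Omega|x||Du|\leqs K$ of Theorem \ref{thm11} handles the region away from the origin (in particular up to $\partial\Omega$), while near the origin one uses a $\delta$-independent $L^q$ bound on the right-hand side with $q>n/(2k)$ together with the local H\"older estimates of \cite{Ld02potential,TW97measure}; your explicit patching of the two regions through the $L^\infty$ bound is fine, and your observation that $|x||Du|\leqs K$ alone does not give continuity at $0$ is correct.

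There is, however, a mismatch with the stated hypothesis, which creates a genuine (though easily repaired) gap. Theorem \ref{thm13} assumes ``the same conditions as in Theorem \ref{thm11}'', i.e.\ the range $s\in(-1,\infty)$ fixed in \eqref{formu11}, not $s>-s_0$. Your near-origin argument for $s<0$ requires an exponent $q$ with $n/(2k)<q<n/(2k|s|)$, and for the $L^q$-to-H\"older machinery (which controls the Hessian measure by $\mu(B_r)\leqs\Vert g_\delta\Vert_{L^q}|B_r|^{1-1/q}$ via H\"older's inequality) one also needs $q\geqs1$; such a $q$ exists precisely when $s>-\min(1,n/(2k))=-s_0$. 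For $k\leqs n/2$ this coincides with $s>-1$, so there is no discrepancy; but for $n/2<k<n$ and $s\in(-1,-n/(2k)]$ your window contains no admissible exponent (indeed $|x|^{2sk}$ is not even in $L^1$ near the origin uniformly in $\delta$), so this step fails there, and your claim that ``the hypothesis $s>-s_0$'' is what is assumed misreads the statement. The paper avoids this by splitting on $k$: for $k>n/2$ it invokes the fact from \cite{TW97measure} that every $k$-admissible function is automatically locally $C^{0,2-n/k}$, with no integrability assumption on the right-hand side at all; your parenthetical convexity remark covers only $k=n$. Adding this case for $n/2<k<n$ (or restricting the theorem to $s>-s_0$, which is all that is used later in the paper) closes the gap; the remainder of your argument is correct and uniform in $\delta$.
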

\begin{proof}
Using the gradient estimate \eqref{formu12}, we deduce the H\"older estimate near the boundary. Thus in order to obtain \eqref{formu131}, we only need to establish the interior H\"older estimate for solutions of \eqref{formu11}. When $k>n/2$, $k$-admissible functions are locally $\alpha$-H\"older continuous with $\alpha=2-n/k$, see \cite{TW97measure}. When $1\leqs k\leqs n/2$, since $s>-1$ we have
\begin{align*}
\left\Vert\big[(|x|^2+\delta^2)^sf(x,u)\big]^k\right\Vert_{L^{p}(\Omega)}\leqs C,
\end{align*}
for some $p>n/2k$ and some constant $C$ independent of $\delta$. Thanks to the potential theory of Hessian equation, we obtain that $u$ is a locally $\alpha$-H\"older continuous function with some $\alpha\in(0,1)$, see \cite[Corollary 3.4]{Ld02potential} and \cite[Corollary 9.1]{Wangnote}. This finishes the proof.
\end{proof}

In the following, we consider the degenerate situation, namely, $s>0$ for the weight $|x|^{2sk}$ in the equation \eqref{formu11}. We here utilize the method in \cite{Wang94class} to deduce the $L^{\infty}$-estimates for gradient and second derivatives of solutions.

\begin{thm}\label{thm14}
Let $\Omega$ be a strictly $(k-1)$-convex bounded domain containing the origin with the boundary $\partial\Omega\in C^{3,1}$. Let $u\in C^{3,1}(\Omega)\cap C^{3}(\overline\Omega)$ be a k-admissible solution of \eqref{formu11} vanishing on the boundary. Suppose that $s>0$ and $f\in C^{1,1}(\overline\Omega\times\mathbb{R})$ is a nonnegative function. Then it holds that
\begin{align}
\sup_{\Omega}|Du|\leqs \hat K,\quad\sup_{\Omega}|D^2u|\leqs \hat L,
\label{formu132}
\end{align}
where the constants $\hat K, \hat L$ depend on $n,k,s,\Omega,\Vert u\Vert_{L^\infty(\Omega)},\Vert f\Vert_{C^{1,1}}$ and the constant $K$ in Theorem \ref{thm11}.
\end{thm}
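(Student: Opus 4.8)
The plan is to run the method of \cite{Wang94class}, i.e.\ to apply the Alexandrov--Bakelman--Pucci (ABP) maximum principle to the linear equations satisfied by $u_e$ (for a fixed unit vector $e$) and by $\Delta u$, exploiting that the linearized operator $F^{ij}\partial_{ij}$ has determinant bounded below, $\det(F^{ij}(D^2u))\geqs C(n,k)>0$ by Section 2, so that its ABP constant is under control. The boundary estimates $\sup_{\partial\Omega}|Du|\leqs C$ and $\sup_{\partial\Omega}|D^2u|\leqs\tilde L$, with constants independent of $\delta$, are available from the barrier constructions of \cite{CNS85Dirichlet} and \cite{Wang94class} recalled in the proofs of Theorems \ref{thm11} and \ref{thm12}. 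Write $\psi=(|x|^2+\delta^2)^sf(x,u)$ and assume $0<\delta\leqs1$. The common device for both estimates is that, since $s>0$, $(|x|^2+\delta^2)^s$ is bounded on $\overline\Omega$ uniformly in $\delta$, hence $\|(|x|^2+\delta^2)^s\|_{L^n(B_{r_0})}\leqs C\,r_0$ is small for $r_0$ small; running ABP on a small ball $B_{r_0}\Subset\Omega$ then lets one absorb the lower order contributions in these equations, while on $\Omega\setminus B_{r_0}$ the weighted bounds of Theorems \ref{thm11}--\ref{thm12} give everything directly.

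For the gradient bound, fix $e$ and put $v=u_e$. Differentiating $F(D^2u)=\psi$ once gives $F^{ij}v_{ij}=\psi_e=2sx_e(|x|^2+\delta^2)^{s-1}f+(|x|^2+\delta^2)^s(f_{x_e}+f_uv)$. The first two terms are pointwise bounded by $C(1+|x|^{2s-1})$, which lies in $L^n(\Omega)$ uniformly in $\delta$ precisely because $s>0$ forces $2s-1>-1$; the last term is bounded by $C(|x|^2+\delta^2)^s|v|$. Applying ABP to $\pm v$ on $B_{r_0}$ and using $\det(F^{ij})\geqs C(n,k)$ yields $\sup_{B_{r_0}}|v|\leqs\sup_{\partial B_{r_0}}|Du|+C+C\,r_0\sup_{B_{r_0}}|v|$; for $r_0$ small the last term is absorbed, and since $\sup_{\partial B_{r_0}}|Du|\leqs K/r_0$ by Theorem \ref{thm11}, we get $\sup_{B_{r_0}}|u_e|\leqs C(r_0)$. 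On $\Omega\setminus B_{r_0}$, Theorem \ref{thm11} gives $|Du|\leqs K/r_0$ directly. Taking the supremum over $e$ yields $\sup_\Omega|Du|\leqs\hat K$.

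For the second derivative bound, differentiate $F(D^2u)=\psi$ twice in $x_\gamma$ and sum over $\gamma$; by the concavity of $F$, $\sum_\gamma F^{ij,pq}u_{ij\gamma}u_{pq\gamma}\leqs0$, so
\begin{align*}
F^{ij}(\Delta u)_{ij}=\Delta\psi-\sum_\gamma F^{ij,pq}u_{ij\gamma}u_{pq\gamma}\geqs\Delta\psi.
\end{align*}
The decisive point is that $(|x|^2+\delta^2)^s$ is subharmonic for $s>0$ and $n\geqs2$, uniformly in $\delta$: one computes $\Delta(|x|^2+\delta^2)^s=2s(n+2s-2)(|x|^2+\delta^2)^{s-1}-4s(s-1)\delta^2(|x|^2+\delta^2)^{s-2}\geqs0$. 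Writing $\Delta\psi=f(x,u)\,\Delta(|x|^2+\delta^2)^s+2\nabla(|x|^2+\delta^2)^s\cdot\nabla\big(f(x,u)\big)+(|x|^2+\delta^2)^s\Delta\big(f(x,u)\big)$, the leading term $f(x,u)\,\Delta(|x|^2+\delta^2)^s$ is nonnegative (as $f\geqs0$) and therefore absent from $(\Delta\psi)^-$, while the remaining two terms are controlled, using the gradient bound $\hat K$ just obtained, by $C(1+|x|^{2s-1})+C(|x|^2+\delta^2)^s|D^2u|$, with $|x|^{2s-1}\in L^n(\Omega)$ uniformly in $\delta$. Since $u$ is $k$-admissible, $\Delta u>0$ and $|D^2u|\leqs\Delta u$. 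Applying ABP to $\Delta u$ on $B_{r_0}$, absorbing the $(|x|^2+\delta^2)^s|D^2u|$ term for $r_0$ small as above, and using Theorem \ref{thm12} (with a fixed $\beta>1$) both to bound $\Delta u$ on $\partial B_{r_0}$ and to bound $|D^2u|$ on $\Omega\setminus B_{r_0}$ by a $\delta$-independent constant, we conclude $\sup_\Omega|D^2u|\leqs\sup_\Omega\Delta u\leqs\hat L$.

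The heart of the matter — and the only place where $s>0$ is genuinely used — is keeping $\psi_e$ and $(\Delta\psi)^-$ in $L^n$ uniformly in $\delta$: this requires $2s-1>-1$ for the singular terms of size $\sim|x|^{2s-1}$, and for the second derivatives it additionally forces the estimate to be run on $\Delta u$ rather than on individual second derivatives, since $\partial_{ee}(|x|^2+\delta^2)^s$ has no sign near the origin when $s<\tfrac12$ whereas $\Delta(|x|^2+\delta^2)^s$ does, so that the nonnegativity of $f$ cancels the borderline $|x|^{2s-2}$ singularity. The other technical point is the localization--absorption step on $B_{r_0}$, which is precisely where the weighted estimates of Theorems \ref{thm11}--\ref{thm12} are invoked away from the origin.
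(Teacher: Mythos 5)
Your proposal is correct and follows essentially the paper's own route: the Alexandrov (ABP) maximum principle applied to the differentiated equations for first derivatives and for $\Delta u$, using $\det(F^{ij}(D^2u))\geqs C(n,k)>0$, the concavity of $F$ to discard $\sum_\gamma F^{ij,pq}u_{ij\gamma}u_{pq\gamma}$, and the key sign observation that $f\,\Delta\big((|x|^2+\delta^2)^s\big)\geqs0$ (the paper phrases this as $2sn+4s(s-1)>0$), so that the borderline $|x|^{2s-2}$ term drops out and only $O(1+|x|^{2s-1})\in L^n$ data remain. The only (harmless) difference is the absorption mechanism: you localize ABP to a small ball about the origin, working with $u_e$ rather than $|Du|^2$, and control everything outside the ball by Theorems \ref{thm11}--\ref{thm12}, whereas the paper runs ABP globally on $\Omega$ and absorbs the superlinear terms by bounding $\Vert Du\Vert_{L^{2n}(\Omega)}^2$ through $\int_\Omega|Du|^{n-1}dx\leqs CK^{n-1}$ (from Theorem \ref{thm11}) and $\Vert\Delta u\Vert_{L^n(\Omega)}$ through $\int_\Omega\Delta u\,dx\leqs C$ via the divergence theorem, which yields sublinear powers of the suprema.
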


\begin{proof}
As discussed in Theorem \ref{thm11} and \ref{thm12}, we have the boundary estimates
\begin{align*}
\sup_{\partial\Omega}|Du|\leqs\tilde K,\quad \sup_{\partial\Omega}|D^2u|\leqs\tilde L
\end{align*}
with the constants $\tilde K, \tilde L$ independent of $\delta$. By \eqref{formu11},
\begin{align*}
F(D^2u)=(|x|^2+\delta^2)^s f(x,u):=\psi(x,u).
\end{align*}
Differentiating this equation with respect to $x_{\g}$, we obtain
\begin{align}
F^{ij}u_{ij\g}=\psi_{\g},\quad F^{ij}u_{ij\g\g}+F^{ij,pq}u_{ij\g}u_{pq\g}=\psi_{\g\g}.
\label{formu133}
\end{align}
Since $f\in C^{1,1}(\overline\Omega\times\mathbb{R})$ and $F^{ij}$ is positive-definite in $\Omega$, we have by \eqref{formu133}
\begin{align}
F^{ij}\partial_{ij}(|Du|^2)&=F^{ij}(2u_{\g}u_{ij\g}+2u_{i\g}u_{j\g})\geqs 2u_{\g}\psi_{\g}  \nonumber\\
&=2u_{\g}\Big[(|x|^2+\delta^2)^s (f_{x_{\g}}+f_u u_{\g})+2sx_{\g}(|x|^2+\delta^2)^{s-1}f\Big]  \nonumber\\
&\geqs -C_1(1+|x|^{2s-1})|Du|-C_2|Du|^2.
\label{formu134}
\end{align}
Noticing that $\det(F^{ij})\geqs C_{n,k}>0$, we then apply the Alexandrov maximum principle to \eqref{formu134} and deduce that
\begin{align*}
\sup_{\Omega}|Du|^2&\leqs\sup_{\partial\Omega}|Du|^2+C\Vert (1+|x|^{2s-1})Du\Vert_{L^n(\Omega)}+C\Vert Du\Vert_{L^{2n}(\Omega)}^2\\
&\leqs\sup_{\partial\Omega}|Du|^2+C'\sup_{\Omega}|Du|+C\sup_{\Omega}|Du|^{\frac{n+1}{n}}\Big[\int_\Omega|Du|^{n-1}dx\Big]^{\frac1n}\\
&\leqs\sup_{\partial\Omega}|Du|^2+C'\sup_{\Omega}|Du|+C'\sup_{\Omega}|Du|^{\frac{n+1}{n}}K^{\frac{n-1}{n}},
\end{align*}
where the last inequality follows from the estimate \eqref{formu12}. This implies
\begin{align}
\sup_{\Omega}|Du|^2\leqs C(1+\sup_{\partial\Omega}|Du|^2)\leqs C(1+\tilde K^2).
\label{formu135}
\end{align}
Next using $f\in C^{1,1}(\overline\Omega\times\mathbb{R})$ and the concavity of $F$, we have by \eqref{formu133}
\begin{align}
F^{ij}\partial_{ij}(\Delta u)=&\Delta\psi-\sum_{\g}F^{ij,pq}u_{ij\g}u_{pq\g}\geqs \Delta\psi  \nonumber\\
=&(|x|^2+\delta^2)^s\sum_{\g}\Big[f_{x_{\g}x_{\g}}+2f_{x_{\g}u}u_{\g}+f_{uu}u_{\g}^2+f_uu_{\g\g}\Big]  \nonumber\\
&+(|x|^2+\delta^2)^{s-1}\sum_{\g}4sx_{\g}(f_{x_{\g}}+f_uu_{\g})  \nonumber\\
&+(|x|^2+\delta^2)^{s-2}\sum_{\g}\Big[2s(|x|^2+\delta^2)+4s(s-1)x_{\g}^2\Big]f  \nonumber\\
\geqs& -C_1(1+|x|^{2s-1})-C_2\Delta u,
\label{formu136}
\end{align}
where the last inequality follows by \eqref{formu135} and $2sn+4s(s-1)>0$. Similarly by applying the Alexandrov maximum principle to \eqref{formu136}, we obtain (note that $\Delta u\geqs0$)
\begin{align*}
\sup_{\Omega}\Delta u&\leqs \sup_{\partial\Omega}\Delta u+C\Vert1+|x|^{2s-1}\Vert_{L^n(\Omega)}+C\Vert\Delta u\Vert_{L^n(\Omega)}\\
&\leqs \sup_{\partial\Omega}\Delta u+C'+C\sup_{\Omega}(\Delta u)^{\frac{n-1}{n}}\Big[\int_\Omega\Delta udx\Big]^{\frac1n}\\
&\leqs \sup_{\partial\Omega}\Delta u+C'+C'\sup_{\Omega}(\Delta u)^{\frac{n-1}{n}},
\end{align*}
where we utilize the divergence theorem at the last inequality. This implies
\begin{align*}
\sup_{\Omega}\Delta u\leqs C(1+\sup_{\partial\Omega}\Delta u)\leqs C(1+\tilde L).
\end{align*}
Since $\lambda(D^2u)\in\Gamma_k\subset\Gamma_2$, we have
\begin{align*}
0\leqs2\sigma_2(\lambda(D^2u))=2\sum_{1\leqs i<j\leqs n}(u_{ii}u_{jj}-u_{ij}^2)=(\Delta u)^2-\sum_{i}u_{ii}^2-\sum_{i\neq j}u_{ij}^2,
\end{align*}
which yields that
\begin{align*}
\sup_{\Omega}|D^2u|\leqs\sup_{\Omega}\Delta u\leqs C(1+\tilde L).
\end{align*}
We finally conclude \eqref{formu132} by setting $\hat K=C(1+\tilde K), \hat L=C(1+\tilde L)$. 
\end{proof}

\section{Existence Results}
In this section, we study the existence results for the following weighted eigenvalue problem
\begin{equation}
\left\{
\begin{array}{ll}
S_k(D^2u)=\big(|x|^{2s}|\lambda u|\big)^k &\text{in }\Omega,\\
u=0 &\text{on }\partial\Omega.
\end{array}
\right.
\label{formu31}
\end{equation}
When $s=0$, this problem was studied by Wang in \cite{Wang94class}. Here we consider the general case $s>-s_0$ for some $s_0>0$.

To continue, we first introduce the regularity and existence results for the Dirichlet problem
\begin{equation}
\left\{
\begin{array}{ll}
S_k(D^2u)=[\psi(x,u)]^k &\text{in }\Omega,\\
u=0 &\text{on }\partial\Omega.
\end{array}
\right.
\label{formu32}
\end{equation}
\begin{thm}\label{thm31}
Suppose that $\psi(x,u)\in C^{1,1}(\overline\Omega\times\mathbb{R})$ and
\begin{align*}
\psi(x,u)>0\quad\text{for}\quad u<0.
\end{align*}
\begin{enumerate}[\rm(i)]
\item If $u\in C^{3,1}(\Omega)\cap C^3(\overline\Omega)$ is a negative solution of \eqref{formu32}, we have the estimates
\begin{align*}
\Vert u\Vert_{C^{1,1}(\overline\Omega)}\leqs C\quad\text{and}\quad\Vert u\Vert_{C^{3,\alpha}(\Omega')}\leqs C(\Omega')\quad\text{for any}\quad\Omega'\Subset\Omega,
\end{align*}
with the constants depending on $n,k,\Omega,\psi$ and $\Vert u\Vert_{L^\infty(\Omega)}$.
\item If there exist a subsolution $w$ and a supersolution $v$ of \eqref{formu32} satisfying $w\leqs v$ in $\Omega$, $w\leqs0$ and $v=0$ on $\partial\Omega$, then \eqref{formu32} admits a solution $u\in C^{3,\alpha}(\Omega)\cap C^{1,1}(\overline\Omega)$ with $w\leqs u\leqs v$.
\end{enumerate}
In addition, if $\psi(x,u)$ is strictly positive, then $C^{3,\alpha}$-estimate is up to the boundary.
\end{thm}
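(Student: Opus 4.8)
The plan is to first establish (i), the a priori estimates, and then use them together with the method of sub/supersolutions to obtain (ii). For part (i), note that the hypothesis $\psi(x,u)>0$ for $u<0$ means that along a negative solution $u$ the right-hand side $[\psi(x,u)]^k$ is a strictly positive $C^{1,1}$-in-$x$ function, so \eqref{formu32} is a nondegenerate Hessian equation. Since $u$ vanishes on $\partial\Omega$, a standard barrier argument (extend the zero boundary data harmonically and use the strictly $(k-1)$-convex geometry of $\Omega$ to build a subsolution near $\partial\Omega$, exactly as in the proof of Theorem \ref{thm11}) gives $C^1(\overline\Omega)$ bounds, hence $-C\leqs u\leqs 0$ and a gradient bound on $\partial\Omega$. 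The interior and boundary second-derivative estimates then follow from the classical Caffarelli-Nirenberg-Spruck theory for $S_k(D^2u)=g$ with $g>0$ and $g^{1/k}\in C^{1,1}$ (see \cite{CNS85Dirichlet, Wang94class}), giving $\Vert u\Vert_{C^{1,1}(\overline\Omega)}\leqs C$. Once $u\in C^{1,1}(\overline\Omega)$ is known, on any $\Omega'\Subset\Omega$ the equation is uniformly elliptic with $C^{1,1}$ right-hand side and concave operator $F=S_k^{1/k}$, so Evans-Krylov together with Schauder theory upgrades $u$ to $C^{3,\alpha}(\Omega')$; if moreover $\psi$ is strictly positive on all of $\overline\Omega\times\mathbb{R}$ then the equation is uniformly elliptic up to $\partial\Omega$ and the boundary $C^{3,\alpha}$ estimate follows from the global Schauder/Evans-Krylov theory (using $\partial\Omega\in C^{3,1}$ and the zero boundary data). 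One technical point: a priori $\psi(x,u(x))$ is only bounded away from zero where $u<0$; near $\partial\Omega$ one uses the barrier $u\geqs\varphi+\sigma\underline u$ to ensure $u$ stays uniformly negative on $\Omega'\Subset\Omega$, while near $\partial\Omega$ the subsolution itself already controls the Hessian, so no degeneracy occurs in the relevant region.

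For part (ii), I would run the classical Perron-type / continuity argument adapted to the Hessian setting. Given the ordered pair $w\leqs v$ with $w\leqs 0$, $v=0$ on $\partial\Omega$, consider the family of Dirichlet problems $S_k(D^2u_t)=[\psi_t(x,u_t)]^k$ in $\Omega$, $u_t=0$ on $\partial\Omega$, obtained either by a continuity method in a parameter $t$ connecting the equation to a solvable model, or directly by the monotone iteration scheme $S_k(D^2u_{m+1})+N u_{m+1}=[\psi(x,u_m)]^k+Nu_m$ (choosing $N$ large so that $t\mapsto [\psi(x,t)]^k - Nt$ is decreasing, which is possible since $\psi\in C^{1,1}$ and we only need monotonicity on the compact range $[\min w,0]$). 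Starting the iteration from $u_0=v$ (or $u_0=w$) and using the comparison principle for $k$-Hessian equations one shows $w\leqs \cdots\leqs u_{m+1}\leqs u_m\leqs\cdots\leqs v$, so the $u_m$ converge monotonically to a limit $u$ with $w\leqs u\leqs v$. The uniform a priori estimates from part (i) — applied to each $u_m$, which is a negative solution of a nondegenerate problem of the same type, with constants depending only on $\Vert w\Vert_{L^\infty}$ — provide $C^{1,1}(\overline\Omega)$ and $C^{3,\alpha}_{loc}(\Omega)$ bounds uniform in $m$, so after extracting a subsequence $u_m\to u$ in $C^{3,\alpha}_{loc}(\Omega)\cap C^{1}(\overline\Omega)$ and $u$ solves \eqref{formu32} with $u\in C^{3,\alpha}(\Omega)\cap C^{1,1}(\overline\Omega)$; the $k$-admissibility is preserved in the limit since $\lambda(D^2u_m)\in\overline\Gamma_k$ is a closed condition and, by interior estimates, $u$ is actually a classical strictly $k$-admissible solution on $\Omega$. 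The claim that the $C^{3,\alpha}$ estimate reaches the boundary when $\psi$ is strictly positive is inherited directly from part (i).

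The main obstacle I anticipate is keeping the estimates genuinely uniform — in $m$ in the iteration (and in any approximation parameter) — despite the fact that the positivity of $\psi$ is only guaranteed on the set $\{u<0\}$ rather than a priori on the whole domain. The resolution is that the sub/supersolution pair traps every iterate between $w$ and $0$, so the relevant right-hand side $[\psi(x,u_m(x))]^k$ is strictly positive precisely where one needs ellipticity (on compact subsets of $\Omega$, where $w<0$, hence $u_m<0$), and near $\partial\Omega$ the barrier construction supplies the needed Hessian control independently of how close $u_m$ is to $0$; the constants in the CNS-type estimates then depend only on $n,k,\Omega$, $\Vert\psi\Vert_{C^{1,1}}$ and $\Vert w\Vert_{L^\infty(\Omega)}$, which is exactly the assertion of the theorem. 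A secondary point to be careful with is the comparison principle for $k$-Hessian equations with $u$-dependent right-hand side: one needs the standard monotonicity in $u$ (achieved by the shift by $Nu$ above) so that the ordering of iterates is preserved at each step.
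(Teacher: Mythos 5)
There is a genuine gap, and it sits in the part you actually argue in detail. (For context: the paper does not prove Theorem \ref{thm31} itself; it quotes it from \cite{Wang94class} and refers to \cite{Aubin81subsuper} for the standard sub/supersolution procedure.) Your iteration $S_k(D^2u_{m+1})+Nu_{m+1}=[\psi(x,u_m)]^k+Nu_m$ monotonizes with the wrong sign for a Hessian operator. Written for the unknown, each step reads $S_k(D^2u_{m+1})=[\psi(x,u_m)]^k+N(u_m-u_{m+1})$, i.e.\ the right-hand side is \emph{decreasing} in $u_{m+1}$. The comparison principle for $S_k(D^2u)=f(x,u)$ requires $f$ nondecreasing in $u$; with your sign it fails: at an interior positive maximum of $u_{m+1}-v$ one gets $S_k(D^2u_{m+1})\leqs S_k(D^2v)$ and hence $N(u_{m+1}-v)\geqs S_k(D^2v)-S_k(D^2u_{m+1})\geqs0$, which is consistent with the maximum rather than a contradiction. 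So the chain $w\leqs\cdots\leqs u_{m+1}\leqs u_m\leqs\cdots\leqs v$ cannot be derived as you claim. Moreover the solvability of each iterate is not covered by the existence theory you can invoke (CNS/Ivochkina treat a prescribed nonnegative right-hand side, and the $u$-dependent results need the proper monotonicity); for large $N$ the problem $S_k(D^2u)+Nu=h$ is of eigenvalue type and may have no admissible solution or several. Flipping the sign to restore properness creates the opposite problem: the right-hand side $[\psi(x,u_m)]^k+N(u_{m+1}-u_m)$ can become negative, destroying $k$-admissibility, and then part (i) no longer applies to the iterates (their right-hand sides are not of the form $[\tilde\psi]^k$ with $\tilde\psi\in C^{1,1}$ and nonnegative). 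The standard repair is multiplicative: iterate e.g.\ $S_k(D^2u_{m+1})=[\psi(x,u_m)]^ke^{N(u_{m+1}-u_m)}$, whose right-hand side is positive and nondecreasing in the unknown, with $t\mapsto[\psi(x,t)]^ke^{-Nt}$ nonincreasing on the compact range for $N$ large; alternatively, in all the paper's applications $\psi$ is nonincreasing in $u$, so the plain iteration $S_k(D^2u_{m+1})=[\psi(x,u_m)]^k$ already yields the monotone scheme.

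In part (i) you also treat the equation as nondegenerate, invoking CNS boundary estimates ``for $g>0$''. But the hypothesis only gives $\psi(x,u)>0$ where $u<0$, and $u=0$ on $\partial\Omega$, so $[\psi(x,u)]^k$ may vanish on the boundary — this is exactly why the theorem asserts only $C^{1,1}(\overline\Omega)$ globally and pushes $C^{3,\alpha}$ to the boundary only under the extra assumption that $\psi$ is strictly positive. Your remark that ``near $\partial\Omega$ the subsolution itself already controls the Hessian'' is not an argument: a barrier controls $u$ and its normal derivative, not $D^2u$, and it gives a lower bound $u\geqs\varphi+\sigma\underline u$, not the negative upper bound on compact subsets you would need to keep the right-hand side bounded away from zero. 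The correct route for the global $C^{1,1}$ bound in this possibly degenerate situation is the structural fact that the $k$-th root of the right-hand side is $\psi\in C^{1,1}$, so the degenerate estimates of \cite{ITW04degenerate} (or Wang's argument in \cite{Wang94class}, which is what the paper cites) apply; the nondegenerate CNS machinery by itself does not reach the boundary here. Your interior Evans--Krylov/Schauder step and the final addendum are fine once these points are in place.
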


A function $u\in\Phi^k(\Omega)$ is said to be a subsolution (or supersolution) of \eqref{formu32} if
\begin{equation*}
\left\{
\begin{array}{ll}
S_k(D^2u)\geqs (\text{or}\leqs)\;[\psi(x,u)]^k &\text{in }\Omega,\\
u\leqs (\text{or}\geqs)\;0 &\text{on }\partial\Omega.
\end{array}
\right.
\end{equation*}
Theorem \ref{thm31} is given in \cite{Wang94class}. The existence result follows by the method of subsolution and supersolution. The procedure is standard and more details are available in \cite{Aubin81subsuper}.

The main result of this section is as follows.
\begin{thm}\label{thm32}
Consider \eqref{formu31} for $s>-s_0$, where $s_0=\min(1,n/2k)$. Then there exists a positive constant $\lambda_1$ depending only on $n,k,s$ and $\Omega$, such that
\begin{enumerate}[\rm(i)]
\item \eqref{formu31} admits a negative solution $\varphi_1\in\Upsilon(\Omega)$ with $\lambda=\lambda_1$,
\item if $\Omega_1\subset\Omega_2$, then $\lambda_1(\Omega_1)\geqs\lambda_1(\Omega_2)$.
\end{enumerate}
Here, the function space $\Upsilon(\Omega)$ is given by \eqref{formu04}:
\begin{equation*}
\left\{
\begin{array}{ll}
C^{\infty}(\Omega\setminus\{0\})\cap C^{1,1}(\overline\Omega) &\text{if}\quad s\in(0,\infty),\\
C^{\infty}(\Omega\setminus\{0\})\cap C^{1,1}(\overline\Omega\setminus\{0\})\cap C^{\alpha}(\overline\Omega) &\text{if}\quad s\in(-s_0,0).
\end{array}
\right.
\end{equation*}
\end{thm}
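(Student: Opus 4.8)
The plan is to adapt the classical eigenvalue argument of Lions and Wang to the weighted and $\delta$-regularized setting, then pass to the limit using the uniform estimates already proved. First I would study the approximating problem \eqref{formu06}, i.e.\ $S_k(D^2u)=\big[(|x|^2+\delta^2)^s|\lambda u|\big]^k$ in $\Omega$, $u=0$ on $\partial\Omega$. For fixed $\delta>0$ the weight $(|x|^2+\delta^2)^s$ is smooth and bounded above and below by positive constants on $\overline\Omega$, so Theorem \ref{thm31} applies with $\psi(x,u)=(|x|^2+\delta^2)^s|\lambda u|$; running the Lions--Wang scheme (solve $S_k(D^2u_m)=\big[(|x|^2+\delta^2)^s(|\lambda u_{m-1}|+m^{-1})\big]^k$, normalize in $L^{k+1}$ or rather in the weighted $L^{k+1}$ norm, and use the monotonicity/concavity of the iteration together with the comparison principle) produces a pair $(\lambda_\delta,\varphi_\delta)$ with $\varphi_\delta<0$ in $\Omega$, $\varphi_\delta\in C^{3,\alpha}(\Omega)\cap C^{1,1}(\overline\Omega)$, and $\int_\Omega(|x|^2+\delta^2)^{sk}|\varphi_\delta|^{k+1}=1$. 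The positivity and $\delta$-independent two-sided bounds on $\lambda_\delta$ come from testing the variational characterization: an upper bound for $\lambda_\delta^k$ follows by plugging a fixed $k$-admissible competitor into the weighted Rayleigh quotient, and a lower bound follows from the weighted Sobolev--type inequality $\int_\Omega(-u)S_k(D^2u)\geqs c\big(\int_\Omega|x|^{2sk}|u|^{k+1}\big)^{k/(k+1)}$, which holds for $s>-s_0$ by the Hardy--Sobolev inequality for the Hessian integral referenced in the introduction (see \cite{HK25variational}); one must also check $\int(|x|^2+\delta^2)^{sk}|u|^{k+1}\le C\int|x|^{2sk}|u|^{k+1}$ uniformly in $\delta$, which is clear for $s\geqs0$ and follows from $(|x|^2+\delta^2)^{sk}\le|x|^{2sk}$ for $s<0$.

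Next I would invoke the uniform a priori estimates. Theorem \ref{thm11} gives $\sup_\Omega|x||D\varphi_\delta|\le K$ and, in the degenerate case $s>0$, Theorem \ref{thm14} gives $\sup_\Omega(|D\varphi_\delta|+|D^2\varphi_\delta|)\le \hat K+\hat L$, all independent of $\delta$ since $\Vert\varphi_\delta\Vert_{L^\infty}$ is controlled (the $L^\infty$ bound itself is uniform: combine the weighted normalization with the Alexandrov maximum principle applied to \eqref{formu06}, or equivalently bound $\Vert\varphi_\delta\Vert_{L^\infty}$ in terms of the fixed competitor used for the upper bound on $\lambda_\delta$). For $s<0$, Theorem \ref{thm12} gives $\sup_\Omega|x|^{2\beta}|D^2\varphi_\delta|\le L$ for every $\beta>1$, and Theorem \ref{thm13} gives a uniform $C^\alpha(\overline\Omega)$ bound. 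Then passing $\delta\to0$ along a subsequence: $\lambda_\delta\to\lambda_1>0$ by the two-sided bounds; $\varphi_\delta\to\varphi_1$ locally uniformly in $\overline\Omega\setminus\{0\}$ (and uniformly on $\overline\Omega$ when $s>0$, using equicontinuity from the uniform gradient bound), and by Remark \ref{rmk12} together with interior Schauder estimates the convergence is in $C^{3}_{loc}(\Omega\setminus\{0\})$, so $\varphi_1$ solves \eqref{formu03} classically in $\Omega\setminus\{0\}$. The regularity claims defining $\Upsilon(\Omega)$ follow directly: $C^{1,1}(\overline\Omega)$ for $s>0$ from Theorem \ref{thm14}, and $C^{1,1}(\overline\Omega\setminus\{0\})\cap C^\alpha(\overline\Omega)$ for $s<0$ from Theorems \ref{thm11}, \ref{thm12} (any fixed $\beta$) and \ref{thm13}; moreover bootstrapping gives $C^\infty(\Omega\setminus\{0\})$ since $|x|^{2sk}$ is smooth away from the origin. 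The only remaining point for (i) is that $\varphi_1\not\equiv0$: this is where the lower bound on $\lambda_\delta$ and the uniform normalization are essential — since $\int_\Omega|x|^{2sk}|\varphi_\delta|^{k+1}=1$ and (for $s<0$) $\varphi_\delta$ is uniformly $C^\alpha$ hence equi-integrable against $|x|^{2sk}$, the limit satisfies $\int_\Omega|x|^{2sk}|\varphi_1|^{k+1}=1$, so $\varphi_1\not\equiv0$; for $s>0$ uniform convergence makes this immediate. Uniqueness up to scalar multiplication is the subject of Section 5 and I would simply cite it here, or defer it.

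For the monotonicity statement (ii), $\lambda_1(\Omega_1)\geqs\lambda_1(\Omega_2)$ when $\Omega_1\subset\Omega_2$, I would use the variational characterization \eqref{formu05}: any $u\in\Phi_0^k(\Omega_1)$ extends by zero to a function in $\Phi_0^k(\Omega_2)$ (extension by zero preserves $k$-admissibility in the viscosity/weak sense, and the two weighted integrals $\int(-u)S_k(D^2u)$ and $\int|x|^{2sk}|u|^{k+1}$ are unchanged), so the infimum defining $\lambda_1(\Omega_2)^k$ is taken over a larger class, hence $\lambda_1(\Omega_2)^k\le\lambda_1(\Omega_1)^k$. Strictly speaking one should establish \eqref{formu05} itself first — that the constructed $\lambda_1$ equals the infimum in the Rayleigh quotient — but this is deferred to Section 6; for the purpose of Theorem \ref{thm32} it suffices to know $\lambda_1(\Omega)^k\le$ (Rayleigh quotient of any competitor), which we already used for the upper bound, and this one-sided comparison already yields (ii) provided one also knows the eigenvalue is characterized from below, i.e.\ no admissible pair gives a smaller ratio; alternatively (ii) follows directly from comparing the constructed $\varphi_1(\Omega_1)$, extended by zero, against the iteration scheme on $\Omega_2$ via the comparison principle, as in \cite[Theorem 4.1]{Wang94class}.

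I expect the main obstacle to be the passage to the limit at the origin in the case $s<0$: the second-derivative bound $|x|^{2\beta}|D^2\varphi_\delta|\le L$ degenerates as $x\to0$, so one only gets $C^3_{loc}(\Omega\setminus\{0\})$ convergence and must argue separately that the limit $\varphi_1$ is a genuine (viscosity and weak) solution across the origin and that the weighted normalization survives the limit — this requires the uniform $C^\alpha(\overline\Omega)$ bound from Theorem \ref{thm13} to rule out concentration of $|x|^{2sk}|\varphi_\delta|^{k+1}$ near $0$ (here $s>-s_0\geqs-n/2k$ guarantees $|x|^{2sk}\in L^1_{loc}$, and the uniform Hölder bound plus this integrability give equi-integrability, so $\int_{B_r}|x|^{2sk}|\varphi_\delta|^{k+1}\to0$ uniformly in $\delta$ as $r\to0$). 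A secondary technical point is verifying that the Lions--Wang iteration still converges with the extra $x$-dependence in $\psi$, but since $(|x|^2+\delta^2)^s$ is a fixed smooth positive factor for each $\delta$ this is routine and covered by Theorem \ref{thm31}.
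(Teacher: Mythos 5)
Your overall skeleton (approximate by \eqref{formu06}, extract an eigenpair $(\lambda_\delta,\varphi_\delta)$ for fixed $\delta$, invoke the uniform estimates of Theorems \ref{thm11}--\ref{thm14}, let $\delta\to0$, and keep the limit nontrivial via a normalization plus equicontinuity/equi-integrability) is exactly the paper's strategy, and that part of your plan is sound. The genuine gap is in how you obtain the $\delta$-uniform bounds on $\lambda_\delta$. Your uniform \emph{upper} bound is ``plug a fixed $k$-admissible competitor into the weighted Rayleigh quotient''; but to conclude $\lambda_\delta^k\leqs I_k(u_0)/\int_\Omega(|x|^2+\delta^2)^{sk}|u_0|^{k+1}$ you need the inequality $\int_\Omega(-u)S_k(D^2u)\,dx\geqs\lambda_\delta^k\int_\Omega(|x|^2+\delta^2)^{sk}|u|^{k+1}dx$ for \emph{all} admissible $u$, i.e.\ the hard half of the variational characterization for the approximate problem. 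That inequality is not available at this stage: the weighted Poincar\'e-type inequality \eqref{formu413} is only proved in Section 6, and its proof there uses the solutions $u_{\lambda,\delta}$ for $\lambda<\lambda_\delta$ coming from the very construction you are performing, so invoking it here is circular unless you first redo Wang's argument for the fixed smooth positive weight $(|x|^2+\delta^2)^{sk}$. (Your \emph{lower} bound is essentially fine once the homogeneity slip is fixed: the correct embedding is $\int_\Omega|x|^{2sk}|u|^{k+1}dx\leqs C\,I_k(u)$, not $I_k(u)\geqs c(\int|x|^{2sk}|u|^{k+1})^{k/(k+1)}$; since the eigenfunction achieves its own quotient, only the ``easy'' direction plus the cited embedding is needed.) The paper avoids all of this machinery: the upper bound follows elementarily from Maclaurin's inequality, $\Delta u\geqs C(n,k)(|x|^2+\delta^2)^s(1-\lambda u)$, comparing $\lambda$ with the first eigenvalue of a fixed linear operator, and the lower bound from the explicit super/subsolution pair $\eta_\delta,\,2\eta_\delta$ in \eqref{formu34}; the paper also proves monotonicity of $\lambda_\delta$ in $\delta$, so the full limit exists (your subsequential limit suffices for (i), but the monotonicity is reused later).

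Part (ii) as you argue it also fails. The claim that the zero extension of $u\in\Phi_0^k(\Omega_1)$ is $k$-admissible in $\Omega_2$ is false: since $u\leqs0$ in $\Omega_1$ and $u=0$ on $\partial\Omega_1$, the extension $\tilde u$ has distributional Hessian carrying a \emph{negative} singular surface term on $\partial\Omega_1$ (already for $k=1$ one has $\Delta\tilde u=\Delta u\,\chi_{\Omega_1}-(\partial_\nu u)\,d\mathcal H^{n-1}\llcorner\partial\Omega_1$ with $\partial_\nu u\geqs0$), so $\tilde u\notin\Phi^k(\Omega_2)$; moreover the argument again presupposes \eqref{formu05}, which is only established in Section 6. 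The paper proves (ii) at the approximate level by a sub/supersolution contradiction: if $\lambda_\delta(\Omega_1)<\lambda_\delta(\Omega_2)$, a solution on $\Omega_2$ for $\lambda\in(\lambda_\delta(\Omega_1),\lambda_\delta(\Omega_2))$ is a subsolution of \eqref{formu34} on $\Omega_1$, a small multiple of a solution on $\Omega_1$ (via Hopf's lemma) is a supersolution, and Theorem \ref{thm31}(ii) then yields a solution on $\Omega_1$ for some $\lambda>\lambda_\delta(\Omega_1)$, contradicting the definition \eqref{formu33}; letting $\delta\to0$ gives (ii). Your ``alternatively, compare via the comparison principle as in Wang'' gestures toward this, but as written it still routes through the zero extension. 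To make your proof correct, replace both Rayleigh-quotient steps (the upper bound on $\lambda_\delta$ and part (ii)) by such sub/supersolution comparisons, or else independently establish the fixed-$\delta$ weighted Poincar\'e inequality before using it.
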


\begin{proof}
We divide the proof into three steps.
\\[0.5em]
\textbf{Step 1.}
Given $0<\delta<1$, we introduce a nonnegative constant
\begin{align}
\lambda_{\delta}=\sup\{\lambda>0: \text{there exists a solution }u_{\lambda,\delta}\in C^2(\overline\Omega)\text{ of } \eqref{formu34}\},
\label{formu33}
\end{align}
where \eqref{formu34} is given by
\begin{align}
S_k(D^2u)=\big[(|x|^2+\delta^2)^s(1-\lambda u)\big]^k\quad\text{in }\Omega,\quad u=0\quad\text{on }\partial\Omega.
\label{formu34}
\end{align}
We first show that $\lambda_{\delta}$ has positive upper and lower bounds uniformly independent of $\delta$. Let $\eta_{\delta}$ be the solution of
\begin{align*}
S_k(D^2u)=(|x|^2+\delta^2)^{sk}\quad\text{in }\Omega,\quad u=0\quad\text{on }\partial\Omega.
\end{align*}
Since $s>-s_0$, by $L^\infty$-estimate (see \cite[Theorem 2.1]{CW01variational}), we can deduce that there exists a uniform constant $C_0>0$ such that $|\eta_{\delta}|\leqs C_0$. Then
\begin{align*}
S_k\big(D^2(2\eta_{\delta})\big)=\big[2(|x|^2+\delta^2)^s\big]^k\geqs \big[(|x|^2+\delta^2)^s(1-2\lambda\eta_{\delta})\big]^k
\end{align*}
for $\lambda\in\big(0,(2\sup_\Omega|\eta_{\delta}|)^{-1}\big)$. Hence, $\eta_{\delta}$ and $2\eta_{\delta}$ are respectively a supersolution and a subsolution of \eqref{formu34}. By Theorem \ref{thm31}(ii), we obtain a solution $u_{\lambda,\delta}\in C^{3+\alpha}(\overline\Omega)$ of \eqref{formu34} for $\lambda\in\big(0,(2\sup_\Omega|\eta_{\delta}|)^{-1}\big)$. This yields a uniform positive lower bound $(2C_0)^{-1}$ for $\lambda_{\delta}$.

To see that $\lambda_{\delta}$ has a uniform upper bound, we just observe that if $(\lambda,u)$ solves \eqref{formu34}, then we have (note that $u\leqs0$)
\begin{align*}
\Delta u\geqs n\left[\binom{n}{k}^{-1} S_k(D^2u)\right]^{1/k}&=C(n,k)(|x|^2+\delta^2)^s(1-\lambda u)>-\lambda C(n,k)(|x|^2+\delta^2)^su\\
&\geqs\left\{
\begin{array}{ll}
-\lambda C(n,k)|x|^{2s}u&\text{if}\quad s>0\\
-\lambda C(n,k)(|x|^2+1)^su&\text{if}\quad s<0
\end{array}
\right..
\end{align*}
Hence, $\lambda$ is less than the first eigenvalue of a linear operator independent of $\delta$. This yields a uniform finite upper bound for $\lambda_{\delta}$.

We then claim that for $\delta_1\leqs\delta_2$, it holds $\lambda_{\delta_1}\geqs\lambda_{\delta_2}$ if $s>0$ and $\lambda_{\delta_1}\leqs\lambda_{\delta_2}$ if $s<0$. We give a proof for the case $s>0$ here; the case $s<0$ is similar. Suppose on the contrary that $\lambda_{\delta_1}<\lambda_{\delta_2}$ for $s>0$. Then for $\lambda\in(\lambda_{\delta_1},\lambda_{\delta_2})$, there has a solution $u_{\lambda,\delta_2}$ of \eqref{formu34}. Thus, $u_{\lambda,\delta_2}$ satisfies
\begin{align*}
S_k(D^2u)=\big[(|x|^2+\delta_2^2)^s(1-\lambda u)\big]^k\geqs\big[(|x|^2+\delta_1^2)^s(1-\lambda u)\big]^k.
\end{align*}
On the other hand, we can choose a solution $u_{\bar\lambda,\delta_1}$ of \eqref{formu34} for some $\bar\lambda<\lambda_{\delta_1}$. By Hopf's Lemma, we have $\theta u_{\bar\lambda,\delta_1}\geqs u_{\lambda,\delta_2}$ when $\theta>0$ is given sufficiently small. Moreover, it follows that
\begin{align*}
S_k\big(D^2(\theta u_{\bar\lambda,\delta_1})\big) =\theta^k\big[(|x|^2+\delta_1^2)^s(1-\bar\lambda u_{\bar\lambda,\delta_1})\big]^k\leqs\big[(|x|^2+\delta_1^2)^s(1-\lambda\theta u_{\bar\lambda,\delta_1})\big]^k.
\end{align*}
Thus, $\theta u_{\bar\lambda,\delta_1}$ and $u_{\lambda,\delta_2}$ are respectively a supersolution and a subsolution of \eqref{formu34}. By Theorem \ref{thm31}(ii) again, we can obtain a solution $(\lambda,u)$ which solves \eqref{formu34} for $\delta=\delta_1$ and $\lambda>\lambda_{\delta_1}$, which leads to a contradiction to the definition of $\lambda_{\delta_1}$. This proves our claim.

We next consider $\Omega_1\subset\Omega_2$. We will show that $\lambda_{\delta}(\Omega_1)\geqs\lambda_{\delta}(\Omega_2)$ by a similar argument. If it is not true, we can select a $\lambda\in(\lambda_{\delta}(\Omega_1),\lambda_{\delta}(\Omega_2))$ and a solution $u_{\lambda,\delta}$ of \eqref{formu34} for $\Omega=\Omega_2$. Hence, $u_{\lambda,\delta}$ is a subsolution of \eqref{formu34} for $\Omega=\Omega_1$. In order to obtain a supersolution, we can select a solution $u_{\bar\lambda,\delta}$ for $\Omega=\Omega_1$ and $\bar\lambda<\lambda_{\delta}(\Omega_1)$. Then we have $\theta u_{\bar\lambda,\delta}\geqs u_{\lambda,\delta}$ for some $\theta>0$ sufficiently small and $\theta u_{\bar\lambda,\delta}$ is a supersolution of \eqref{formu34} vanishing on $\partial\Omega_1$. By Theorem \ref{thm31}(ii), there exists a solution $(\lambda,u)$ of \eqref{formu34} for $\Omega=\Omega_1$ and $\lambda>\lambda_{\delta}(\Omega_1)$, which contradicts the definition of $\lambda_{\delta}(\Omega_1)$. This illustrates $\lambda_{\delta}(\Omega_1)\geqs\lambda_{\delta}(\Omega_2)$ for $\Omega_1\subset\Omega_2$.
\\[0.5em]
\textbf{Step 2.}
In this step, we consider the limit as $\lambda\to\lambda_{\delta}$.

Using Hopf's Lemma and Theorem \ref{thm31}(ii) as in Step 1, we can deduce that for any $\lambda\in[0,\lambda_{\delta})$, there exists a solution $u_{\lambda,\delta}$ of \eqref{formu34}. We claim that $\Vert u_{\lambda,\delta}\Vert_{L^\infty(\Omega)}$ tends to $+\infty$ as $\lambda\to\lambda_{\delta}$. Indeed, if it is not the case, then by Theorem \ref{thm31}(i), we have $\Vert u_{\lambda,\delta}\Vert_{C^{3+\alpha}(\overline\Omega)}\leqs M_{\delta}$ uniformly for $\lambda\in[0,\lambda_{\delta})$. Extract a subsequence of $u_{\lambda,\delta}$ so that it converges to $u_{\delta}^*$ in $C^3(\overline\Omega)$. By taking the limit, it follows that $(\lambda_{\delta},u_{\delta}^*)$ is a solution of \eqref{formu34}. Then we have
\begin{align*}
S_k\big(D^2(2u_{\delta}^*)\big)&=2^k\big[(|x|^2+\delta^2)^s(1-\lambda_{\delta} u_{\delta}^*)\big]^k =\big[(|x|^2+\delta^2)^s(2-2\lambda_{\delta} u_{\delta}^*)\big]^k\\
&\geqs\big[(|x|^2+\delta^2)^s(1-(\lambda_{\delta}+\varepsilon)2u_{\delta}^*)\big]^k,
\end{align*}
if $\varepsilon$ is a positive constant given by $(2\sup_\Omega |u_{\delta}^*|)^{-1}$. Using Theorem \ref{thm31}(ii) again, there exists a solution $u\in C^2(\overline\Omega)$ of \eqref{formu34} for $\lambda=\lambda_{\delta}+\varepsilon$, which yields a contradiction to the definition of $\lambda_{\delta}$. Therefore, we derive that $\Vert u_{\lambda,\delta}\Vert_{L^\infty(\Omega)}\to+\infty$.

Denote $v_{\lambda,\delta}=u_{\lambda,\delta}/\Vert u_{\lambda,\delta}\Vert_{L^\infty(\Omega)}$, then $-1\leqs v_{\lambda,\delta}\leqs0$, and $v_{\lambda,\delta}$ satisfies
\begin{align}
S_k(D^2v)=\big[(|x|^2+\delta^2)^s(\Vert u_{\lambda,\delta} \Vert_{L^\infty(\Omega)}^{-1} -\lambda v)\big]^k\quad\text{in }\Omega,\quad v=0\quad\text{on }\partial\Omega.
\label{formu35}
\end{align}
For $\delta>0$ fixed, we apply Theorem \ref{thm31}(i) to deduce $\Vert v_{\lambda,\delta}\Vert_{C^{1,1}(\overline\Omega)}\leqs M_{\delta}$ uniformly for $\lambda\in[0,\lambda_{\delta})$. Moreover by the normalization, $v_{\lambda,\delta}$ does not converge uniformly to $0$ in $\overline\Omega$. Thus for any $\Omega'\Subset\Omega$, we have $\Vert u_{\lambda,\delta}\Vert_{L^\infty(\Omega)}^{-1} -\lambda v_{\lambda,\delta}\geqs C(\Omega')$ for some positive constant $C(\Omega')$ independent of $\lambda$. By the regularity theory for uniformly elliptic equation, we derive the local $C^{m,\alpha}$ bounds for all $m\geqs1$. Hence by taking a subsequence, $v_{\lambda,\delta}$ converges to some $\varphi_{\delta}\in C^{\infty}(\Omega)\cap C^{1,1}(\overline\Omega)$ and by passing the limit in \eqref{formu35}, $\varphi_{\delta}$ verifies
\begin{align}
S_k(D^2u)=\big[(|x|^2+\delta^2)^s|\lambda_{\delta}u|\big]^k\quad\text{in }\Omega,\quad u=0\quad\text{on }\partial\Omega.
\label{formu36}
\end{align}
\textbf{Step 3.}
In this step, we take the limit as $\delta\to0$.

In Step 1, we derive finite positive upper and lower bounds of $\lambda_{\delta}$ uniformly for $0<\delta<1$. We also obtain the monotonicity property for $\lambda_{\delta}$: given $\delta_1\leqs\delta_2$, it holds $\lambda_{\delta_1}\geqs\lambda_{\delta_2}$ if $s>0$ and $\lambda_{\delta_1}\leqs\lambda_{\delta_2}$ if $s<0$. Hence, there exists a positive constant $\lambda_1\in(0,+\infty)$ such that $\lambda_{\delta}\to\lambda_1$ as $\delta\to0$. Since $\lambda_{\delta}(\Omega_1)\geqs\lambda_{\delta}(\Omega_2)$ for $\Omega_1\subset\Omega_2$, we also obtain $\lambda_1(\Omega_1)\geqs\lambda_1(\Omega_2)$ by taking $\delta\to0$. This finishes the proof of the second part.

In Step 2, we obtain for every $\delta>0$, there exists a $\varphi_{\delta}\in C^{\infty}(\Omega)\cap C^{1,1}(\overline\Omega)$ to be the solution of \eqref{formu36}. Suppose $\Vert\varphi_{\delta}\Vert_{L^{\infty}(\Omega)}=1$ for every $\delta>0$. We next consider $s>0$ and $s<0$ separately. For the case $-s_0<s<0$, we use Theorem \ref{thm11}$\sim$\ref{thm13} to derive the  uniform estimates
\begin{align*}
\sup_\Omega|x||D\varphi_{\delta}|\leqs K,\quad\sup_\Omega|x|^{2\beta}|D^2\varphi_{\delta}|\leqs L \quad\text{and}\quad\Vert\varphi_{\delta}\Vert_{C^\alpha(\overline\Omega)}\leqs C,
\end{align*}
where $\beta>1$, $\alpha\in(0,1)$ and $K,L,C$ are positive constants independent of $\delta$. Since $\varphi_{\delta}$ is negative inside $\Omega$, by Remark \ref{rmk12} and interior Schauder theory, we have the uniform local $C^{m,\alpha}$ bounds for $\varphi_{\delta}$ for every $\Omega'\Subset\Omega\setminus\{0\}$ and $m\geqs1$. Therefore by extracting a subsequence, $\varphi_{\delta}$ converges to some function $\varphi_1\in C^{\infty}(\Omega\setminus\{0\})\cap C^{1,1}(\overline\Omega\setminus\{0\})\cap C^{\alpha}(\overline\Omega)$, which is a solution to \eqref{formu31} with $\lambda=\lambda_1$.

For the case $s>0$, we use Theorem \ref{thm14} to derive the uniform estimates
\begin{align*}
\sup_{\Omega}|D\varphi_{\delta}|\leqs\hat K\quad\text{and}\quad \sup_{\Omega}|D^2\varphi_{\delta}|\leqs\hat L,
\end{align*}
where $\hat K, \hat L$ are positive constants independent of $\delta$. Similarly by Remark \ref{rmk12} and interior Schauder theory, we have the uniform local $C^{m,\alpha}$ bounds for $\varphi_{\delta}$ for every $\Omega'\Subset\Omega\setminus\{0\}$ and $m\geqs1$. Up to a subsequence, $\varphi_{\delta}$ converges to some function $\varphi_1\in C^{\infty}(\Omega\setminus\{0\})\cap C^{1,1}(\overline\Omega)$, which is a solution to \eqref{formu31} with $\lambda=\lambda_1$. Combining the two cases, we actually deduce that \eqref{formu31} admits a negative solution $\varphi_1\in\Upsilon(\Omega)$ with $\lambda=\lambda_1$, where $\Upsilon(\Omega)$ is given by \eqref{formu04}. This completes the proof.
\end{proof}

Theorem \ref{thm32} presents the existence result for the eigenvalue problem \eqref{formu31}. As stated, the solution $\varphi_1\in\Upsilon(\Omega)$ solves the equation in viscosity sense as well as in Hessian measure sense. We will prove the uniqueness result in the next section. 

The features of $\lambda_1$ in Theorem \ref{thm32} coincide the well-known properties of the first eigenvalue of linear elliptic operators of second order. Hence, we will call $\lambda_1$ the (first) eigenvalue of Hessian equation with weight $|x|^{2sk}$ and $\varphi_1$ its corresponding eigenfunction. Another fundamental feature of the eigenvalue $\lambda_1$ is the formula \eqref{formu05}:
\begin{align*}
\lambda_1^k=\inf_{u\in\Phi_0^k(\Omega)}\left\{\int_{\Omega}(-u)S_k(D^2u)dx: \int_{\Omega}|x|^{2sk}|u|^{k+1}dx=1\right\}.
\end{align*}
This result will be proved in the last section.

\section{Uniqueness Results}
In this section, we will prove the uniqueness of solution to the weighted eigenvalue problem
\begin{equation}
\left\{
\begin{array}{ll}
S_k(D^2u)=\big(|x|^{2s}|\lambda u|\big)^k &\text{in }\Omega,\\
u=0 &\text{on }\partial\Omega,
\end{array}
\right.
\label{formu51}
\end{equation}
where $s>-s_0$ for $s_0=\min(1,n/2k)$. In Section 4, we obtain $(\lambda_1,\varphi_1)$ is a solution of \eqref{formu51}. Hence, we need to show that if there exists another solution $(\lambda^*,\varphi^*)\in(0,+\infty)\times\Upsilon(\Omega)$ of \eqref{formu51}, then $\lambda^*=\lambda_1$ and $\varphi^*=c\varphi_1$ for some positive constant $c$.

First, we prove the Hopf's Lemma for linearized equations of \eqref{formu51}. Here we utilize the idea in \cite{Huang19uniqueness}, where the Monge-Ampère case is considered.
\begin{lemma}\label{lem51}
Suppose that $(\lambda,\varphi)$ is a nontrivial solution of \eqref{formu51}. If $v\in C^2(\Omega\setminus\{0\})\cap C(\overline{\Omega})$ satisfying $v\geqs0$ is a solution of the problem
\begin{equation*}
\left\{
\begin{array}{ll}
F^{ij}(D^2\varphi)\partial_{ij}v=h &\text{in }\Omega,\\
v=0 &\text{on }\partial\Omega,
\end{array}
\right.
\end{equation*}
with $h\leqs0$ and $h\not\equiv0$, then there exists a positive constant $\theta$ such that
\begin{align*}
v(x)\geqs\theta\,dist(x,\partial\Omega)\quad\text{near }\partial\Omega.
\end{align*}
\end{lemma}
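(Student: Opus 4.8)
The plan is to prove a Hopf-type boundary inequality for the linearized operator $L := F^{ij}(D^2\varphi)\partial_{ij}$. The key structural facts are: $F^{ij}(D^2\varphi)$ is positive-definite in $\Omega\setminus\{0\}$ (since $\varphi$ is $k$-admissible and negative inside $\Omega$, so $\lambda(D^2\varphi)\in\Gamma_k$, hence $F^{ij}>0$ as a matrix by Proposition \ref{pro21}(v)); moreover $\det(F^{ij})\geqs C(n,k)>0$ by the remarks following Proposition \ref{pro21}. The difficulty, compared to the classical Hopf lemma, is that $L$ may be degenerate or singular at the origin, so one cannot simply invoke the strong maximum principle for uniformly elliptic operators on all of $\Omega$. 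But the statement is purely about the behavior near $\partial\Omega$, and $0\in\Omega$ is an interior point, so near $\partial\Omega$ the operator is perfectly uniformly elliptic once we have upper and lower bounds for the eigenvalues of $(F^{ij})$ there — and these follow from the a priori estimates $\Vert\varphi\Vert_{C^{1,1}(\overline\Omega)}\leqs C$ (respectively the interior estimates away from $0$) together with $\det(F^{ij})\geqs C(n,k)$, which bounds the smallest eigenvalue from below once the largest is bounded from above.

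First I would fix a small collar neighborhood $\mathcal{N}_\rho = \{x\in\Omega : \mathrm{dist}(x,\partial\Omega)<\rho\}$ with $\rho$ small enough that $0\notin\overline{\mathcal{N}_{2\rho}}$; on $\mathcal{N}_{2\rho}$ the operator $L$ is uniformly elliptic with ellipticity constants depending only on the allowed data. Next, since $h\leqs 0$ and $h\not\equiv 0$, and since $v\geqs 0$ solves $Lv=h\leqs 0$ in $\Omega\setminus\{0\}$, the weak maximum principle (applied on compact subsets of $\Omega\setminus\{0\}$, or the removable-singularity fact that $v$ is continuous across $0$) gives $v>0$ somewhere; combined with the strong maximum principle on $\Omega\setminus\{0\}$ for the supersolution $v$ of $Lv\leqs 0$, one gets $v>0$ throughout $\Omega\setminus\{0\}$. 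Thus $v$ has a positive lower bound $m>0$ on the compact set $\{\mathrm{dist}(x,\partial\Omega)=\rho\}$. Then I would run the standard Hopf barrier argument: for each boundary point $x_0\in\partial\Omega$, using an interior ball $B_R(y_0)\subset\mathcal{N}_{2\rho}$ tangent to $\partial\Omega$ at $x_0$, set $w(x) = \varepsilon\bigl(e^{-\mu|x-y_0|^2} - e^{-\mu R^2}\bigr)$ with $\mu$ large (depending only on the uniform ellipticity constants on $\mathcal{N}_{2\rho}$) so that $Lw\geqs 0 \geqs h = Lv$ on the annular region $B_R(y_0)\setminus B_{R/2}(y_0)$, and $\varepsilon$ small so that $w\leqs m\leqs v$ on $\partial B_{R/2}(y_0)$ while $w=0\leqs v$ on $\partial B_R(y_0)\supseteq\{x_0\}$. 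The comparison principle on this annulus (which lies in the uniformly elliptic region) yields $v\geqs w$ there, and differentiating the explicit barrier $w$ in the inward normal direction at $x_0$ gives $v(x)\geqs\theta\,\mathrm{dist}(x,\partial\Omega)$ near $x_0$ with $\theta>0$ uniform in $x_0$ by compactness of $\partial\Omega$.

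The main obstacle is handling the singularity of $L$ at the origin so that the positivity $v>0$ in $\Omega\setminus\{0\}$ is legitimately propagated up to the collar $\{\mathrm{dist}(x,\partial\Omega)=\rho\}$; this is where one must be careful, because the strong maximum principle is being applied on the punctured domain $\Omega\setminus\{0\}$ rather than on $\Omega$. The clean way around this is to note that $v$ is continuous on all of $\overline\Omega$, so if $v$ vanished somewhere in $\Omega\setminus\{0\}$ it would attain an interior minimum $0$ at a point $p\neq 0$; applying the strong maximum principle to the connected open set $\Omega\setminus\{0\}$ (which is connected since $n\geqs 2$) forces $v\equiv 0$ on $\Omega\setminus\{0\}$, hence on $\overline\Omega$ by continuity, contradicting $h\not\equiv 0$. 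Everything else is the textbook Hopf construction carried out on the region where $L$ is genuinely uniformly elliptic, and the barrier constants depend only on $n,k,\Omega$ and the a priori bounds on $\varphi$.
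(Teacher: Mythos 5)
There is a genuine gap, and it sits exactly where your proposal claims the problem has been reduced to a textbook situation: the assertion that $L=F^{ij}(D^2\varphi)\partial_{ij}$ is \emph{uniformly} elliptic on a collar $\mathcal{N}_{2\rho}$ touching $\partial\Omega$. Recall $F=S_k^{1/k}$, so $F^{ij}=\tfrac1k S_k^{(1-k)/k}(D^2\varphi)\,S_k^{ij}(D^2\varphi)$. The $C^{1,1}$ bound on $\varphi$ controls $S_k^{ij}$, but on $\partial\Omega$ we have $\varphi=0$ and hence $S_k(D^2\varphi)=\big(|x|^{2s}\lambda|\varphi|\big)^k\to0$, so for $k\geqs2$ the prefactor $S_k^{(1-k)/k}$ blows up as $x\to\partial\Omega$. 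Thus the largest eigenvalue of $(F^{ij})$ is not bounded above near the boundary, and consequently the inequality $\det(F^{ij})\geqs C(n,k)$ gives neither a lower bound for the smallest eigenvalue nor a bound on the ellipticity ratio there. Your exponential barrier $w=\varepsilon\big(e^{-\mu|x-y_0|^2}-e^{-\mu R^2}\big)$ needs $Lw\geqs0$ on an annulus reaching $\partial\Omega$, i.e. $4\mu^2F^{ij}(x-y_0)_i(x-y_0)_j\geqs2\mu\sum_iF^{ii}$ with a \emph{fixed} $\mu$; without a uniform bound on $\big(\sum_iF^{ii}\big)/\big(F^{ij}(x-y_0)_i(x-y_0)_j\big)$ up to the boundary, no admissible choice of $\mu$ (and hence of $\theta$) is available from the stated data. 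The interior part of your argument (positivity $v>0$ in $\Omega\setminus\{0\}$ via the strong maximum principle on compact subsets away from $0$ and $\partial\Omega$, where the operator is indeed locally uniformly elliptic because $\varphi<0$ there) is fine and matches the first step of the paper's proof; it is only the boundary barrier that fails.

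The paper avoids this by building a barrier adapted to the degenerate operator rather than invoking uniform ellipticity. It takes $\eta$ solving $S_k(D^2\eta)=1$, $\eta=0$ on $\partial\Omega$, and sets $w=-\varepsilon\varphi+\varrho\eta$; concavity (and $1$-homogeneity) of $F$ gives $F^{ij}(D^2\varphi)\partial_{ij}\eta\geqs F(D^2\eta)=1$ while $F^{ij}(D^2\varphi)\partial_{ij}\varphi=F(D^2\varphi)=|x|^{2s}\lambda|\varphi|\leqs C\varrho$ in the collar $\widetilde\Omega_\varrho$ (by the boundary gradient bound for $\varphi$), so $Lw\geqs-C\varepsilon\varrho+\varrho>0$. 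The comparison step then only uses that $(F^{ij})\geqs0$, i.e. degenerate ellipticity, and the Hopf lemma for $\varphi$ itself converts $v\geqs-\tau\varepsilon\varphi/2$ into the linear lower bound. If you want to salvage your route, you would have to prove a two-sided ellipticity bound for $(F^{ij}(D^2\varphi))$ up to $\partial\Omega$ (which is not available and is not needed); the cleaner fix is to replace the exponential barrier by a barrier of the paper's type built from $\varphi$ and a solution of $S_k(D^2\eta)=1$.
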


\begin{proof}
For $\varrho>0$, set $\widetilde\Omega_{\varrho}:=\{x\in\Omega:dist(x,\partial\Omega)\leqs\varrho\}$. Take $\varrho$ small so that the origin does not belong to $\widetilde\Omega_{2\varrho}$. Then using the maximum principle, there has a positive constant $c_{\varrho}>0$ such that
\begin{align*}
v(x)\geqs c_{\varrho}\quad\text{on }\partial\widetilde\Omega_{\varrho}\cap\Omega.
\end{align*}
Let $\eta$ be the solution to the problem
\begin{equation*}
\left\{
\begin{array}{ll}
S_k(D^2\eta)=1 &\text{in }\Omega,\\
\eta=0 &\text{on }\partial\Omega.
\end{array}
\right.
\end{equation*}
Consider $w=-\varepsilon\varphi+\varrho\eta$. By the concavity of $F$ and gradient estimate of $\varphi$, it holds
\begin{align*}
F^{ij}(D^2\varphi)\partial_{ij}w&\geqs-\varepsilon F(D^2\varphi)+\varrho F(D^2\eta)  \\
&=-\varepsilon|x|^{2s}|\lambda\varphi|+\varrho\geqs-C\varepsilon\varrho+\varrho>0\qquad\text{in }\widetilde\Omega_{\varrho},
\end{align*}
provided $\varepsilon>0$ small. Then take $\varrho>0$ sufficiently small such that $w\geqs-\varepsilon\varphi/2$ in $\widetilde\Omega_{\varrho}$. 

Next, consider $\tau>0$ small enough so that
\begin{align*}
\tau w\leqs-\tau\varepsilon\varphi\leqs C\tau\varepsilon\varrho \leqs c_\varrho\quad\text{on }\partial\widetilde \Omega_{\varrho}\cap\Omega.
\end{align*}
Therefore, we have
\begin{equation*}
\left\{
\begin{array}{ll}
F^{ij}(D^2\varphi)\partial_{ij}v\leqs0<F^{ij}(D^2\varphi) \partial_{ij}(\tau w)\quad\text{in } \widetilde\Omega_{\varrho},\\
v=0=\tau w \quad\text{on }\partial\Omega,\quad
v\geqs c_\varrho\geqs\tau w \quad\text{on }\partial\widetilde\Omega_{\varrho}\cap\Omega.
\end{array}
\right.
\end{equation*}
Applying the comparison principle, we obtain $v\geqs\tau w\geqs-\tau\varepsilon\varphi/2$ in $\widetilde\Omega_{\varrho}$. Since $\varphi$ satisfies the Hopf's Lemma, there exists a constant $\theta_0>0$ such that $\varphi\leqs-\theta_0\,dist(x,\partial\Omega)$ near the boundary. We finally obtain the desired result by setting $\theta=\tau\varepsilon\theta_0/2$.
\end{proof}

In the following, we introduce the fundamental solution $w_k$ to $k$-Hessian equation and we refer the readers to \cite{TW99measure} and \cite{Wangnote} for details. Indeed, if we define
\begin{equation}
w_k(x)=\left\{
\begin{array}{ll}
 |x|^{2-n/k}  & \text{if }k>n/2, \\
 \log|x|  & \text{if }k=n/2, x\neq0,  \\
 -|x|^{2-n/k}  & \text{if }k<n/2, x\neq0,
\end{array}
\right.
\label{formunew}
\end{equation}
and $w_k(0)=-\infty$ for $1\leqs k\leqs n/2$, then $w_k\in\Phi^k(\mathbb{R}^n)$ and we can compute the Hessian measure $\mu_k$
\begin{equation*}
\mu_k(w_k)=C(n,k)\delta_0,
\end{equation*}
where $\delta_0$ denotes the Dirac measure at the origin.

Next, we consider two cases separately: $k\leqs n/2$ and $k>n/2$.

\begin{thm}\label{thm51}
Assume $k>n/2$ and $(\lambda,\varphi)$ is a nontrivial solution of \eqref{formu51}. Then there exists a constant $\alpha>2-n/k$, such that $\varphi\in C^{\alpha}$(or $C^{1,\gamma}$ if $\alpha>1$) at the origin.
\end{thm}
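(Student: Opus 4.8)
The plan is to work locally near the origin and, in both ranges $n/2<k<n$ and $k=n$, to produce for $\varphi$ at $0$ a modulus of continuity of order strictly larger than $2-n/k$, i.e.\ strictly better than that of the fundamental solution $w_k(x)=|x|^{2-n/k}$. The degenerate range $s>0$ is trivial: then $\varphi\in C^{1,1}(\overline\Omega)$ (it belongs to $\Upsilon(\Omega)$), so $\varphi$ is $C^{1,1}$ at $0$ and any $\alpha\in(2-n/k,2)$ works. So assume $-s_0<s<0$, whence $s_0=n/2k$. Since $\varphi$ is negative and continuous on $\overline\Omega$ with $\varphi(0)<0$, there is $r_0>0$ with $c_\ast^{\,k}|x|^{2sk}\leqs S_k(D^2\varphi)=(|x|^{2s}|\lambda\varphi|)^k\leqs C_\ast^{\,k}|x|^{2sk}$ on $B_{r_0}$, $0<c_\ast\leqs C_\ast$. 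The model profile is the radial function $\Psi(x)=|x|^{2+2s}$, whose Hessian has eigenvalues $(2+2s)|x|^{2s}(1+2s,1,\dots,1)$: thus $\Psi$ is $k$-admissible near $0$ exactly when $1+2s\geqs-(n-k)/k$, i.e.\ $s\geqs-s_0$, and then $S_k(D^2\Psi)=C_0|x|^{2sk}$ with $C_0=(2+2s)^k\bigl[\binom{n-1}{k}+(1+2s)\binom{n-1}{k-1}\bigr]$, which is positive for $s>-s_0$ and vanishes when $s=-s_0$ (where $\Psi=w_k$). This is exactly what pins the threshold $s>-s_0$: we expect $|\varphi(x)-\varphi(0)|\approx|x|^{2+2s}$ and $2+2s>2-n/k\iff s>-s_0$.

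For $n/2<k<n$ I would make this rigorous through potential estimates. Since $\varphi$ solves \eqref{formu51} also in the Hessian-measure sense, $\mu:=\mu_k[\varphi]=(|x|^{2s}|\lambda\varphi|)^k\,dx$ carries no mass at the origin; hence $\mu(B_t)\approx t^{\,n+2sk}$ near $0$ and Labutin's Wolff potential of \cite{Ld02potential} satisfies
\[
\mathbf{W}^{\mu}_k(0,\rho):=\int_0^\rho\left(\frac{\mu(B_t)}{t^{\,n-2k}}\right)^{1/k}\frac{dt}{t}\approx\int_0^\rho t^{2s+1}\,dt\approx\rho^{2+2s}.
\]
Labutin's pointwise estimate bounds $\mathrm{osc}_{B_\rho(0)}\varphi$ by $C\mathbf{W}^{\mu}_k(0,2\rho)$ plus a lower-order averaged term; feeding in the standard interior bound $\mathrm{osc}_{B_\rho(0)}\varphi\leqs C\rho^{2-n/k}$ for $k$-admissible functions and iterating, the exponent improves and tends to $2+2s$, so that $|\varphi(x)-\varphi(0)|\leqs C_\varepsilon|x|^{2+2s-\varepsilon}$ for every $\varepsilon>0$. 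Since $2+2s-\varepsilon>2-n/k$ for small $\varepsilon$, this already gives the claim when $2+2s\leqs1$. If $2+2s>1$ I would upgrade to $C^{1,\gamma}$ at $0$ by applying the interior gradient estimate of \cite{CW01variational} on the balls $B_{|x|/2}(x)$ (where the right-hand side is comparable to $|x|^{2sk}$):
\[
|D\varphi(x)|\leqs C\,|x|^{-1}\,\mathrm{osc}_{B_{|x|/2}(x)}\varphi+C\,|x|^{1+2s}\leqs C_\varepsilon\,|x|^{1+2s-\varepsilon}\longrightarrow0\quad(x\to0),
\]
so $D\varphi(0)=0$ and $\varphi$ is $C^{1,\gamma}$ at the origin with $1+\gamma=2+2s-\varepsilon>2-n/k$.

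For $k=n$ (so $2-n/k=1$, $s_0=\tfrac12$, hence $2+2s\in(1,2)$) I would instead use Monge--Amp\`ere theory directly. On $B_{r_0}$ the equation reads $\det D^2\varphi=(|x|^{2s}|\lambda\varphi|)^n$, whose right-hand side is bounded below by the positive constant $c_\ast^{\,n}r_0^{\,2sn}$; so by Caffarelli's theory $\varphi$ is strictly convex near the origin, and subtracting an affine function (which leaves $\det D^2\varphi$ unchanged) we may assume $\varphi\geqs0=\varphi(0)$. The comparison principle for the Monge--Amp\`ere measure then sandwiches $\varphi$ between multiples of $|x|^{2+2s}$ --- this is legitimate since, as $n+2sn>0$, neither $\varphi$ (a measure-sense solution) nor $A|x|^{2+2s}$ carries a point mass at $0$ --- and combining this with Caffarelli's interior $C^{1,\gamma}$ estimate, applied in sections that stay away from the origin, gives $\varphi\in C^{1,\gamma}$ at the origin with $1+\gamma$ close to $2+2s>1=2-n/k$.

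The step I expect to be the main obstacle is the potential-theoretic bootstrap for $n/2<k<n$: controlling the averaged lower-order terms in Labutin's estimate so that the exponent actually climbs to $2+2s$, and, above all, verifying that $\mu_k[\varphi]$ puts no mass at the origin --- without this the Wolff potential is only of order $\rho^{2-n/k}$ and no improvement over the fundamental solution can be extracted. The closely related difficulty in the Monge--Amp\`ere case is matching the barrier $|x|^{2+2s}$ across the puncture at $0$, which again forces one to know beforehand that the relevant Monge--Amp\`ere measure is atomless there.
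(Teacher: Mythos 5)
Your toolkit is the same as the paper's (Labutin's Wolff potential for $n/2<k<n$, interior Monge--Amp\`ere $C^{1,\gamma}$ theory for $k=n$, and the key computation $\mu(B_t)\lesssim t^{n+2sk}$, hence $\mathbf W_k^{\mu}(0,\rho)\lesssim\rho^{2+2s}$), but the central step in the range $n/2<k<n$ has a genuine gap. Labutin's estimates do not control $\mathrm{osc}_{B_\rho(0)}\varphi$ by $\mathbf W_k^{\mu}(0,2\rho)$ plus a ``lower-order averaged term'': the extra term is a boundary/infimum term of the same order as the oscillation itself, so the iteration you sketch does not raise the exponent on its own. Worse, the target of your bootstrap, $|\varphi(x)-\varphi(0)|\le C_\varepsilon|x|^{2+2s-\varepsilon}$, is false in general once $2+2s>1$: the origin is just an interior point, nothing forces $D\varphi(0)=0$, so the oscillation of $\varphi$ about $0$ is generically of order $|x|$; the subsequent ``upgrade'' in which you deduce $D\varphi(0)=0$ from the interior gradient estimate inherits this error. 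What the theorem needs is much weaker, namely some $\alpha\in(2-n/k,1)$, and the paper obtains it by a splitting your argument is missing: on $B_r$ let $\eta_1$ solve $S_k(D^2\eta_1)=0$ with boundary data $\varphi$ and $\eta_2$ solve $S_k(D^2\eta_2)=S_k(D^2\varphi)$ with zero boundary data, so that $\eta_1\ge\varphi\ge\eta_1+\eta_2$. The interior gradient estimate gives $|\eta_1(x)-\eta_1(0)|\le C r^{1-n/k}|x|$ (using the $C^{2-n/k}$ bound for $k$-admissible functions), Labutin's estimate (2.19) gives $-\eta_2(0)\le C\mathbf W_k^{\mu}(0,r)\le Cr^{\epsilon+2-n/k}$ with $\epsilon=n/k+2s$, and choosing the inner scale $\tilde r\sim r^{1+\epsilon}$ yields $|\varphi(x)-\varphi(0)|\le C|x|^{\alpha}$ with $\alpha=\frac{\epsilon+2-n/k}{\epsilon+1}\in(2-n/k,1)$. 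It is this decomposition plus the scale separation, not an iteration on the oscillation, that converts the Wolff-potential bound into an improved modulus at the single point $0$ without having to kill the linear part.

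On your other points: the ``no atom at the origin'' issue is settled by the meaning of solution, not by a separate verification --- $\varphi$ is a Hessian-measure (weak) solution in all of $\Omega$, so $\mu_k[\varphi]=(|x|^{2s}|\lambda\varphi|)^k\,dx$ with $2sk>-n$ is absolutely continuous; for the constructed eigenfunction this is also immediate from weak convergence of $\mu_k[\varphi_\delta]$. For $k=n$ your final conclusion rests, as in the paper, on Caffarelli's interior $C^{1,\gamma}$ theory for doubling Monge--Amp\`ere measures (the paper verifies the doubling condition for $(|x|^{2s}|\lambda\varphi|)^n$ and quotes that theory directly); your barrier comparison with $A|x|^{2+2s}$ as stated only produces the lower (non-degeneracy) bound for the normalized solution, not the upper bound $\varphi-\ell\le C|x|^{1+\gamma}$, so it does not replace the sections/doubling machinery. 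The $s>0$ case is indeed trivial, as you say.
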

\begin{proof}
For the case $k=n$, it is easy to check that the measure $\mu=(|x|^{2s}|\lambda\varphi|)^k$ satisfies the doubling condition $\mu(\omega)\leqs b\mu(2^{-1}\omega)$ for any convex subdomain $\omega\subset\Omega$. Thus, we can directly apply the interior $C^{1,\gamma}$ estimate for Monge-Ampère equation, see \cite[Section 8.4]{Han16}.

For the case $n/2<k<n$, consider $0<r<1$ small so that $B_r(0)\subset\Omega$. Let $\eta_1$ be the solution to
\begin{align*}
S_k(D^2u)=0\quad\text{in }B_r\quad\text{and}\quad u=\varphi\quad\text{on }\partial B_r,
\end{align*}
and let $\eta_2$ be the solution to
\begin{align*}
S_k(D^2u)=S_k(D^2\varphi)\quad\text{in }B_r\quad\text{and}\quad u=0\quad\text{on }\partial B_r.    
\end{align*}
Then by the maximum principle, we have $\eta_1\geqs\varphi\geqs\eta_1+\eta_2$. Hence, for $x\in B_{r/2}(0)$
\begin{align}
\varphi(x)-\varphi(0)\leqs\eta_1(x)-[\eta_1(0)+\eta_2(0)]= [\eta_1(x)-\eta_1(0)]-\eta_2(0).
\label{formu52}
\end{align}
By the interior gradient estimate established in \cite[Theorem 4.1]{Wangnote}, we have
\begin{align}
|\eta_1(x)-\eta_1(0)|\leqs C(\mathop{\text{osc}}\limits_{B_r}\varphi)\frac{|x|}{r}\leqs Cr^{2-n/k}\frac{|x|}{r}=Cr^{1-n/k}|x|,
\label{formu53}
\end{align}
where the second inequality follows from the local H\"older continuity of order $2-n/k$ for $k$-admissible functions, see \cite{TW97measure}.

On the other hand, by applying the potential theory (see (2.19) in \cite{Ld02potential}), we obtain
\begin{align}
-\eta_2(0)&\leqs CW_k^\mu(0,r)=C\int_0^r\left(\frac{\mu(B_t(0))}{t^{n-2k}}\right)^{1/k}\frac{dt}{t},
\label{formu54}
\end{align}
where the measure $\mu$ is given by $S_k(D^2\varphi)=(|x|^{2s}|\lambda\varphi|)^k$. By direct computation, it yields that $-\eta_2(0)\leqs Cr^{\epsilon+2-n/k}$, where $\epsilon$ denotes $(n+2sk)/k>0$.

Take $\tilde{r}=r^{\epsilon+1}/2$. Then combining \eqref{formu52}$\sim$\eqref{formu54}, we deduce
\begin{align*}
\varphi(x)-\varphi(0)\leqs Cr^{\epsilon+2-n/k}\leqs C\tilde{r}^{\alpha} \quad\text{in }B_{\tilde{r}}(0),
\end{align*}
where $\alpha=\frac{\epsilon+2-n/k}{\epsilon+1}>2-n/k$. Similaly we have $\varphi(0)-\varphi(x)\leqs C\tilde{r}^{\alpha}$ holds in $B_{\tilde{r}}(0)$. Hence, $\varphi$ is H\"older continuous at the origin with exponent $\alpha>2-n/k$.
\end{proof}

\noindent\textbf{Proof of Uniqueness Result for $k>n/2$:}

It is enough to verify the uniqueness result for $k>n/2$. Without loss of generality, assume $\lambda^*\geqs\lambda_1$. 
Suppose on the contrary that $\varphi^*$ is not proportional to $\varphi_1$ in $\Omega$. Then we consider
\begin{align*}
t_0=\max\{t>0:-\varphi^*\geqs-t\varphi_1\text{ in }\Omega\}.
\end{align*}
Then by the concavity of $F$, it follows that
\begin{align*}
F^{ij}(D^2\varphi_1)\partial_{ij}(-\varphi^*+t_0\varphi_1)&\leqs-F(D^2\varphi^*)+t_0F(D^2\varphi_1) \\
&=|x|^{2s}(\lambda^*\varphi^*-t_0\lambda_1\varphi_1) \leqs|x|^{2s}\lambda^*(\varphi^*-t_0\varphi_1)\leqs0.
\end{align*}
By Lemma \ref{lem51} and the definition of $t_0$, we infer that there exists an interior point $x_0$ such that $-\varphi^*+t_0\varphi_1=0$ at $x_0$. An application of Hopf's Lemma for uniform elliptic operators yields that $x_0$ must be the origin. Given $r>0$ small, we have $-\varphi^*+t_0\varphi_1\geqs c_0$ on $\partial B_r$ for some positive constant $c_0$. Therefore, we deduce by the concavity of $F$
\begin{equation*}
\left\{
\begin{array}{ll}
F^{ij}(D^2\varphi_1)\partial_{ij}(-\varphi^*+t_0\varphi_1)\leqs0\leqs F^{ij}(D^2\varphi_1)\partial_{ij}(\varepsilon w_k) & \text{in }B_r\setminus\{0\}, \\
-\varphi^*+t_0\varphi_1\geqs c_0\geqs\varepsilon w_k &\text{on }\partial B_r, \\
-\varphi^*+t_0\varphi_1=0=\varepsilon w_k &\text{on }x=0,
\end{array}
\right.
\end{equation*}
provided $\varepsilon>0$ sufficiently small. Using the comparison principle, we obtain $-\varphi^*+t_0\varphi_1\geqs\varepsilon w_k\geqs0$ in $B_r$. Since $-\varphi^*+t_0\varphi_1$ vanishes at the origin, this yields a contradiction to Theorem \ref{thm51}. Hence, we conclude that $\lambda^*=\lambda_1$ and $\varphi^*=c\varphi_1$ for some $c>0$.      \hfill $\square$
\\

When it turns to the case $k\leqs n/2$, we will deal with the problem by spectral theory, as in \cite{Lions85remarks} and \cite{Wang94class}. Following Kuo-Trudinger\cite{KT07maximum}, let $\Gamma_k^*$ be the dual cone of the G\aa rding cone $\Gamma_k$, which is given by
\begin{align*}
\Gamma_k^*=\{\lambda\in\mathbb{R}^n:\lambda\cdot\xi\geqs0\text{ for all }\xi\in\Gamma_k\}.
\end{align*}
It is easy to check $\Gamma_k^*\subset\Gamma_l^*$ for $k\leqs l$. For $\xi\in\Gamma_k$ and $\lambda\in\Gamma_k^*$, denote
\begin{align*}
\rho_k(\xi)=\left\{\frac{\sigma_k(\xi)}{\binom{n}{k}}\right\}^{1/k}\quad\text{and}\quad\rho_k^*(\lambda)=\inf\left\{\frac{\lambda\cdot\xi}{n}:\xi\in\Gamma_k,\rho_k(\xi)\geqs1\right\}.
\end{align*}
We will employ the same notion as above for matrices $A=(a_{ij})$, writing $A\in\Gamma_k(\text{or }\Gamma_k^*)$ if $\lambda(A)\in\Gamma_k(\text{or }\Gamma_k^*)$ and defining $\rho_k(A)=\rho_k(\lambda(A))$, $\rho_k^*(A)=\rho_k^*(\lambda(A))$.

If we consider the linearized operator $F^{ij}(D^2u)$ of Hessian equation, then by G\aa rding inequality, we have for any $(r_{ij})\in\Gamma_k$
\begin{align*}
\sum_{i,j}F^{ij}(D^2u)r_{ij}\geqs S_k^{1/k}(r_{ij})\geqs0.
\end{align*}
Thus, $F^{ij}(D^2u)\in\Gamma_k^*$ and by the definition of $\rho_k^*$, we obtain
\begin{align}
\rho_k^*(F^{ij})&=\inf\left\{\frac{F^{ij}r_{ij}}{n}:r_{ij}\in\Gamma_k, \rho_k(r_{ij})\geqs1\right\}\nonumber\\
&\geqs\inf\left\{\frac1nS_k^{1/k}(r_{ij}):r_{ij}\in\Gamma_k, \rho_k(r_{ij})\geqs1\right\}=\frac1n\binom{n}{k}^{1/k}.
\label{formu55}
\end{align}
In the following, we study the linearized problem of \eqref{formu51}.

\begin{lemma}\label{lem52}
Assume $k\leqs n/2$ and $(\lambda,\varphi)$ is a nontrivial solution of \eqref{formu51}. Then there exists a unique solution $v\in W_{loc}^{2,p}(\Omega\setminus\{0\})\cap C(\overline{\Omega}\setminus\{0\})\cap L^{\infty}(\Omega)$ of the problem
\begin{equation}
\left\{
\begin{array}{ll}
  F^{ij}(D^2\varphi)\partial_{ij}v=|x|^{2s}h   & \text{in }\Omega\setminus\{0\}, \\
   v=0  & \text{on }\partial\Omega,
\end{array}
\right.
\label{formu56}
\end{equation}
where $h\in L^\infty(\Omega)$ and $p>n/2$.
\end{lemma}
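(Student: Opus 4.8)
The plan is to solve the problem on an exhaustion of $\Omega\setminus\{0\}$ by smooth annular domains, to obtain a $\rho$-independent $L^\infty$ bound by a barrier tailored to the eigenfunction, and then to pass to the limit; uniqueness will follow by comparison with the fundamental solution $w_k$. For small $\rho>0$ put $\Omega_\rho=\Omega\setminus\overline{B_\rho}$. On $\Omega_\rho$ the coefficients $F^{ij}(D^2\varphi)$ are smooth and uniformly elliptic on compact subsets of $\Omega\setminus\{0\}$ (they may degenerate, $\mathcal F=\sum F^{ii}\to\infty$, only toward $\partial\Omega$, but $\det(F^{ij})\geqs C(n,k)>0$ throughout), and $|x|^{2s}h\in L^\infty(\Omega_\rho)$; hence the Dirichlet problem $F^{ij}(D^2\varphi)\partial_{ij}v_\rho=|x|^{2s}h$ in $\Omega_\rho$, $v_\rho=0$ on $\partial\Omega_\rho$, is solvable (first for smooth coefficients obtained by mollifying $\varphi$, then letting the mollification parameter tend to zero, the $L^\infty$ bound below being stable under this). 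Once we have $\|v_\rho\|_{L^\infty(\Omega_\rho)}\leqs C\|h\|_{L^\infty(\Omega)}$ uniformly in $\rho$, the interior $W^{2,p}$ estimates on compact subsets of $\Omega\setminus\{0\}$ give, for every $p<\infty$, $\rho$-independent bounds, so a diagonal extraction produces $v_{\rho_j}\to v$ weakly in $W^{2,p}_{loc}(\Omega\setminus\{0\})$ with $v$ solving the equation and $\|v\|_{L^\infty(\Omega)}\leqs C\|h\|_\infty$; since $W^{2,p}_{loc}\hookrightarrow C_{loc}$ for $p>n/2$ and the limit inherits the barrier bound (which vanishes at $\partial\Omega$), $v\in C(\overline\Omega\setminus\{0\})$ with $v=0$ on $\partial\Omega$.

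The heart of the matter is therefore the uniform barrier. I would take $\Psi=A(-\varphi)+B(-\eta)$, where $\eta\in C^\infty(\overline\Omega)$ solves $S_k(D^2\eta)=1$ in $\Omega$, $\eta=0$ on $\partial\Omega$ (so $\eta<0$ in $\Omega$), and $A,B>0$ are to be chosen. Then $\Psi\geqs0$ on $\overline\Omega\setminus\{0\}$, $\Psi=0$ on $\partial\Omega$, and $\Psi\in L^\infty$. By homogeneity of degree one of $F=S_k^{1/k}$ one has $F^{ij}(D^2\varphi)\varphi_{ij}=F(D^2\varphi)=|x|^{2s}|\lambda\varphi|$, hence
\[ F^{ij}(D^2\varphi)\partial_{ij}(-\varphi)=-\lambda|x|^{2s}|\varphi|\leqs0 , \]
and by concavity of $F$ together with the same identity, $F^{ij}(D^2\varphi)\eta_{ij}\geqs F(D^2\eta)=1$, i.e. $F^{ij}(D^2\varphi)\partial_{ij}(-\eta)\leqs-1$. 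Since $\varphi$ is $k$-admissible it is subharmonic, hence (being bounded near $0$) subharmonic across $0$, so $\varphi(0)<0$ by the strong maximum principle; thus $|\varphi|\geqs c_0>0$ on some $\overline{B_{r_0}}\subset\Omega$. Choosing $A\geqs(\lambda c_0)^{-1}$ makes $A\,F^{ij}(D^2\varphi)\partial_{ij}(-\varphi)\leqs-|x|^{2s}$ on $B_{r_0}$, while choosing $B\geqs\sup_{\Omega\setminus B_{r_0}}|x|^{2s}<\infty$ makes $B\,F^{ij}(D^2\varphi)\partial_{ij}(-\eta)\leqs-|x|^{2s}$ on $\Omega\setminus B_{r_0}$; adding the remaining nonpositive term in each region gives $F^{ij}(D^2\varphi)\partial_{ij}\Psi\leqs-|x|^{2s}$ in $\Omega\setminus\{0\}$. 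Consequently $F^{ij}(D^2\varphi)\partial_{ij}(\|h\|_\infty\Psi\pm v_\rho)\leqs0$ in $\Omega_\rho$ with $\|h\|_\infty\Psi\pm v_\rho\geqs0$ on $\partial\Omega_\rho$, and the maximum principle — valid because $\det(F^{ij})\geqs C(n,k)>0$, so that the Aleksandrov--Bakelman--Pucci estimate applies — yields $|v_\rho|\leqs\|h\|_\infty\Psi\leqs C(n,k,s,\lambda,\Omega)\|h\|_\infty$, uniformly in $\rho$.

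For uniqueness, let $v_1,v_2$ be two solutions and $w=v_1-v_2$, so $w$ is bounded, continuous on $\overline\Omega\setminus\{0\}$, $F^{ij}(D^2\varphi)$-harmonic in $\Omega\setminus\{0\}$, and zero on $\partial\Omega$. Here I use that for $k\leqs n/2$ one can form $W_k\geqs0$ on $\overline\Omega\setminus\{0\}$ with $W_k\to+\infty$ at $0$ and $F^{ij}(D^2\varphi)\partial_{ij}W_k\leqs0$: take $W_k=|x|^{2-n/k}$ for $k<n/2$ and $W_k=\log(R/|x|)$ with $R\geqs\operatorname{diam}\Omega$ for $k=n/2$, the sign following from $S_k(D^2w_k)=0$ in $\mathbb{R}^n\setminus\{0\}$ together with $F^{ij}(D^2\varphi)(w_k)_{ij}\geqs F(D^2w_k)$ (concavity, or G\aa rding, applied to $D^2w_k\in\overline\Gamma_k$). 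Then $w-\varepsilon W_k$ is a subsolution with $w-\varepsilon W_k\leqs0$ on $\partial\Omega$ and $w-\varepsilon W_k\to-\infty$ at $0$; applying the maximum principle on $\{x:\operatorname{dist}(x,\partial\Omega)>\tau\}\setminus\overline{B_\rho}$ and letting $\rho,\tau\to0$ (using continuity of $w$ up to $\partial\Omega$) gives $w\leqs\varepsilon W_k$ on $\Omega\setminus\{0\}$, hence $w\leqs0$ as $\varepsilon\downarrow0$; the same argument for $-w$ forces $w\equiv0$.

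The main obstacle is the uniform $L^\infty$ estimate, that is, the barrier: it must be built from the eigenfunction itself (so that Euler's identity makes $-\varphi$ an almost-supersolution and concavity of $F$ makes $-\eta$ a supersolution), and it must simultaneously absorb the singular/degenerate weight $|x|^{2s}$ and the singularity of the coefficients at $0$ while respecting the genuine degeneration of the operator at $\partial\Omega$ — the latter forcing us to rely only on $\det(F^{ij})\geqs C(n,k)>0$ (hence ABP, not a uniformly elliptic maximum principle, globally) and explaining why $v$ is only asserted continuous, and not $W^{2,p}$, up to $\partial\Omega$. The hypothesis $k\leqs n/2$ enters only in the uniqueness step, through the blow-up of $w_k$ at the origin.
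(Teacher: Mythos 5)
Your barrier $\Psi=A(-\varphi)+B(-\eta)$ and the comparison with the fundamental solution in the uniqueness step are sound (the uniqueness argument is essentially the paper's), but there is a genuine gap at the very first step: the solvability of the approximate Dirichlet problems. On $\Omega_\rho=\Omega\setminus\overline{B_\rho}$ the operator $F^{ij}(D^2\varphi)\partial_{ij}$ is \emph{not} uniformly elliptic up to $\partial\Omega$: since $S_k(D^2\varphi)=(|x|^{2s}|\lambda\varphi|)^k\to0$ at $\partial\Omega$ and $F^{ij}=\tfrac1k S_k^{1/k-1}S_k^{ij}$, the coefficients blow up (for $k\geqs2$) and are not even continuous up to $\partial\Omega$ (only $\varphi\in C^{1,1}$ there), while the ellipticity ratio degenerates; for such operators, existence of a strong solution with zero boundary data is not a standard fact, and $\det(F^{ij})\geqs C(n,k)$ alone gives ABP \emph{a priori} bounds, not existence. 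Your proposed remedy — mollify $\varphi$ and pass to the limit — does not close this: $\varphi_\epsilon$ is not defined past $\partial\Omega$, $S_k(D^2\varphi_\epsilon)$ may vanish so $F^{ij}(D^2\varphi_\epsilon)$ can be ill-defined or degenerate, and, crucially, your $L^\infty$/boundary estimates hinge on identities tied to the exact linearization ($F^{ij}(D^2\varphi)\varphi_{ij}=F(D^2\varphi)$ and $F^{ij}(D^2\varphi)\eta_{ij}\geqs F(D^2\eta)$), which you have not shown to survive, uniformly, for the regularized operators. Without an $\epsilon$-uniform barrier for the approximations you cannot guarantee the limit attains $v=0$ on $\partial\Omega$, and without uniform $W^{2,p}$ control near $\partial\Omega$ (where the coefficients are discontinuous) you cannot pass to the limit in the nondivergence equation there; so the existence of $v_\rho$, and hence of $v$, is asserted rather than proved.

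This is exactly the difficulty the paper's construction is designed to avoid: it approximates the \emph{operator}, not the domain, setting $L_m=\eta_m L+(1-\eta_m)\Delta$ with cut-offs vanishing near $\partial\Omega$ and the origin, so each $L_m$ is uniformly elliptic with continuous coefficients on $\overline\Omega$ and \eqref{formu57} is classically solvable in $W^{2,p}(\Omega)\cap C(\overline\Omega)$. The $m$-independent estimates then come from the Kuo--Trudinger maximum principle \cite{KT07maximum}: by concavity of $\rho_k^*$ and \eqref{formu55}, $\rho_k^*(A_m)\geqs\frac1n\binom{n}{k}^{1/k}$ for all $m$, which yields both the uniform $L^\infty$ bound and the fact that the single distance-function barrier $\pm\vartheta\sigma$ is a sub/supersolution for \emph{every} $L_m$ simultaneously (via $A_m\cdot D^2\underline v\geqs n\rho_k^*(A_m)\rho_k(D^2\underline v)\geqs S_k^{1/k}(D^2\underline v)$), giving the uniform boundary estimate \eqref{formu58}. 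If you want to keep your (attractive) eigenfunction barrier, you would still need an approximation scheme with this kind of built-in uniformity; as written, the scheme is the missing ingredient, not the estimates.
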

\begin{proof}
Let $\{\eta_m\}\subset C_0^\infty(\Omega)$ be a sequence of smooth cut-off functions such that $0\leqs\eta_m\leqs1$ in $\Omega$ and
\begin{align*}
\eta_m=
\left\{
\begin{array}{ll}
 1 & \text{when}\quad dist(x,\partial\Omega\cup\{0\})>1/m, \\
 0 & \text{when}\quad dist(x,\partial\Omega\cup\{0\})<1/2m.
\end{array}
\right.
\end{align*}
This illustrates that each $\eta_m$ vanishes on a neighborhood of the boundary and the origin. Denote $L=F^{ij}(D^2\varphi)\partial_{ij}$ and $A=F^{ij}(D^2\varphi)$. Then, by setting
\begin{align*}
L_m=\eta_mL+(1-\eta_m)\Delta,\qquad A_m=\eta_mA+(1-\eta_m)I_n,
\end{align*}
we actually obtain a sequence of uniform elliptic operators on $\Omega$, whose elliptic constants may rely on $m$. By our choice of $s$, there exists a constant $p>n/2$ such that $|x|^{2s}h\in L^p(\Omega)$. Hence we can obtain a unique solution $v_m\in W^{2,p}(\Omega)\cap C(\overline{\Omega})$ of the problem 
\begin{equation}
\left\{
\begin{array}{ll}
  L_mv=|x|^{2s}h   & \text{in }\Omega, \\
   v=0  & \text{on }\partial\Omega.
\end{array}
\right.
\label{formu57}
\end{equation}
Furthermore, by the concavity of $\rho_k^*$ and \eqref{formu55}, we indeed have $\rho_k^*(A_m)\geqs\frac1n\binom{n}{k}^{1/k}$ for all $x\in\Omega$. Then using the maximum principle established in \cite{KT07maximum}, we derive the $L^\infty$ estimate for $v_m$:
\begin{align*}
\sup_{\Omega}|v_m|\leqs C\left\Vert\frac{|x|^{2s}h}{\rho_k^*(A_m)}\right\Vert_{L^p(\Omega)}\leqs M,
\end{align*}
where $M$ is a uniform constant depending only on $n,k,s,p,h$ and $\Omega$.

Next, we obtain the uniform boundary gradient estimate for $v_m$. Since $\Omega$ is a strictly $(k-1)$-convex bounded domain of class $C^2$, one can construct a sub-barrier $\underline{v}$ and a super-barrier $\overline{v}$ near the boundary $\partial\Omega$. Indeed, denote
\begin{align*}
    \sigma=-d_x+bd_x^2,\quad\text{where}\quad d_x=dist(x,\partial\Omega).
\end{align*}
We refer to \cite{GT83elliptic} for the computation of the first and second derivatives of the distance function. By a rotation of coordinates, it holds that
\begin{align*}
(D^2\sigma)=\text{diag}\left\{\frac{\kappa_1(1-2bd_x)}{1-\kappa_1d_x},\cdots,\frac{\kappa_{n-1}(1-2bd_x)}{1-\kappa_{n-1}d_x},2b\right\}\quad\text{near }\partial\Omega.
\end{align*}
Thus taking $b>0$ large, we have $\sigma\in\Phi^k$ and $S_k(D^2\sigma)\geqs C_b>0$ holds in $\widetilde\Omega_{\varrho}=\{x\in\Omega:d_x\leqs\varrho\}$ for $\varrho>0$ given sufficiently small. Take $\vartheta>0$ large enough such that
\begin{align*}
S_k\big(D^2(\vartheta\sigma)\big)\geqs (|x|^{2s}|h|)^k\quad \text{in }\widetilde\Omega_{\varrho},\qquad\vartheta\sigma\leqs-M\quad\text{on }\partial\widetilde\Omega_{\varrho}\cap\Omega.
\end{align*}
Denote $\underline{v}=\vartheta\sigma$ and $\overline{v}=-\vartheta\sigma$. We will verify that $\underline{v}\leqs v_m\leqs\overline{v}$ in $\widetilde\Omega_{\varrho}$ for all $m$. Indeed, by \cite[Proposition 2.1]{KT07maximum}, it follows that
\begin{align*}
L_m(\underline{v})&=A_m\cdot D^2\underline{v}\geqs n\rho_k^*(A_m)\rho_k(D^2\underline{v})\\
&\geqs S_k^{1/k}(D^2\underline{v})\geqs|x|^{2s}h =L_mv_m\qquad \text{in }\widetilde\Omega_{\varrho}.
\end{align*}
Furthermore, it holds that $\underline{v}\leqs v_m$ on $\partial\widetilde\Omega_{\varrho}$. Hence, by comparison principle we obtain $\underline{v}\leqs v_m$ in $\widetilde\Omega_{\varrho}$. By a similar argument, we have $v_m\leqs\overline{v}$ in $\widetilde\Omega_{\varrho}$. By taking the limit as $x\to\partial\Omega$, we obtain $|\nabla v_m|\leqs\vartheta$ on $\partial\Omega$ uniformly for $m$, and it holds
\begin{align}
|v_m(x)|\leqs \vartheta\,dist(x,\partial\Omega)\quad\text{near }\partial\Omega,\quad\text{uniformly for }m.
\label{formu58}
\end{align}

Using the interior $W^{2,p}$ estimate, it follows that for every $\Omega'\Subset\Omega\setminus\{0\}$,
\begin{align*}
\Vert v_m\Vert_{W^{2,p}(\Omega')}\leqs C,
\end{align*}
with $C=C(\Omega')$ independent of $m$. Then if we consider a sequence of subdomains tending to $\Omega\setminus\{0\}$, we can extract a subsequence of $\{v_m\}$ converging to a $v\in W_{loc}^{2,p}(\Omega\setminus\{0\})\cap L^{\infty}(\Omega)$. Furthermore, \eqref{formu58} implies that $v$ is continuous up to the boundary $\partial\Omega$. By taking the limit in \eqref{formu57}, $v$ is a solution of the linearized problem \eqref{formu56}.

Finally, we prove the uniqueness of solution. Suppose $\tilde v\in W_{loc}^{2,p}(\Omega\setminus\{0\})\cap C(\overline{\Omega}\setminus\{0\})\cap L^{\infty}(\Omega)$ is another solution of \eqref{formu56}. Then $|v-\tilde v|$ is bounded by a constant $K$. Recall the definition \eqref{formunew} of $w_k$, then for any $\varepsilon>0$, set $r>0$ small enough such that $\varepsilon w_k\leqs-K$ on $\partial B_r$. Hence, by the concavity of $F$ we deduce that
\begin{equation*}
\left\{
\begin{array}{ll}
F^{ij}(D^2\varphi)\partial_{ij}(v-\tilde v)=0\leqs F^{ij}(D^2\varphi)\partial_{ij}(\varepsilon(w_k-c_k)) & \text{in }\Omega\setminus B_r, \\
v-\tilde v\geqs -K\geqs\varepsilon(w_k-c_k) &\text{on }\partial B_r, \\
v-\tilde v=0\geqs\varepsilon(w_k-c_k) &\text{on }\partial\Omega,
\end{array}
\right.
\end{equation*}
where $c_k$ equals $\log(diam(\Omega)+1)$ if $k=n/2$ or 0 if $k<n/2$. Then using the comparison principle, we obtain $v-\tilde v\geqs\varepsilon(w_k-c_k)$ in $\Omega\setminus B_r$. For any fixed compact domain $\Omega'\Subset\Omega\setminus\{0\}$, by letting $\varepsilon\to0$ ($r\to0$ as well), we infer that $v-\tilde v\geqs0$ in $\Omega'$. Due to the arbitrariness of $\Omega'$, we obtain $v-\tilde v\geqs0$ in $\Omega\setminus\{0\}$. By a similar argument on $\tilde v-v$, we conclude that $v\equiv\tilde v$ in $\Omega\setminus\{0\}$. This completes the proof.
\end{proof}

\begin{lemma}\label{lem53}
The mapping $h\mapsto v$ in Lemma \ref{lem52} is compact from $L^\infty(\Omega)$ to itself.
\end{lemma}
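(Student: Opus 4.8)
The plan is to check that the linear operator $T\colon h\mapsto v$ furnished by Lemma~\ref{lem52} carries bounded sequences of $L^\infty(\Omega)$ to relatively compact ones. Fix $\{h_j\}\subset L^\infty(\Omega)$ with $\Vert h_j\Vert_{L^\infty(\Omega)}\leqs1$ and put $v_j=Th_j$. First I would record the uniform estimates already present in the proof of Lemma~\ref{lem52}: the maximum principle of \cite{KT07maximum} together with \eqref{formu55} gives $\Vert v_j\Vert_{L^\infty(\Omega)}\leqs M$, and the sub/super-barriers $\pm\vartheta\sigma$ used there give the uniform boundary decay $|v_j(x)|\leqs\vartheta\,dist(x,\partial\Omega)$ near $\partial\Omega$, where $M,\vartheta$ depend on the sequence only through $\Vert h_j\Vert_{L^\infty}\leqs1$. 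On any $\Omega'\Subset\Omega\setminus\{0\}$ the coefficients $F^{ij}(D^2\varphi)$ are smooth and uniformly elliptic (they do not depend on $j$) and $|x|^{2s}h_j$ is uniformly bounded, so interior $W^{2,q}$ estimates give $\Vert v_j\Vert_{W^{2,q}(\Omega')}\leqs C(\Omega',q)$ for every $q<\infty$; since $W^{2,q}(\Omega')$ embeds compactly into $C^0(\overline{\Omega'})$ for $q>n/2$, a diagonal argument over an exhaustion of $\Omega\setminus\{0\}$ extracts a subsequence (not relabelled) converging in $C^0_{loc}(\Omega\setminus\{0\})$.

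The heart of the matter is a near-origin estimate uniform in $j,j'$. Since $s>-s_0$ and $s_0=1$ here (as $k\leqs n/2$), the exponent $p>n/2$ of Lemma~\ref{lem52} may be taken with $2sp+n>0$, so that $\Vert\,|x|^{2s}\,\Vert_{L^p(B_r)}=c\,r^{2s+n/p}=:\omega(r)\to0$ as $r\to0^+$, where $B_r=B_r(0)$. Fixing $r\in(0,1)$ with $\overline{B_r}\subset\Omega$, the bounded function $w:=v_j-v_{j'}$ solves $F^{ij}(D^2\varphi)\partial_{ij}w=|x|^{2s}(h_j-h_{j'})$ in $B_r\setminus\{0\}$ with continuous boundary values on $\partial B_r$. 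Combining the maximum principle of \cite{KT07maximum} on $B_r$ (its constant, by \eqref{formu55}, being independent of $j,j',r$) with a comparison against $\pm\varepsilon w_k$ to absorb the isolated singularity at the origin — the very mechanism of the construction and uniqueness parts of Lemma~\ref{lem52}, using that $F^{ij}(D^2\varphi)\partial_{ij}w_k\geqs0$ by concavity while $w_k\to-\infty$ at $0$ — and letting $\varepsilon\to0$, I would obtain
\[
\sup_{B_r}|v_j-v_{j'}|\ \leqs\ \sup_{\partial B_r}|v_j-v_{j'}|\ +\ C\,\omega(r),
\]
with $C$ independent of $j,j',r$.

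Finally I would patch the three regions. Given $\varepsilon>0$, choose $\rho>0$ with $2\vartheta\rho<\varepsilon$, then $r\in(0,1)$ with $\overline{B_r}\subset\{x\in\Omega:dist(x,\partial\Omega)>\rho\}$ and $C\omega(r)<\varepsilon/4$. The set $\mathcal K=\{x\in\overline\Omega:dist(x,\partial\Omega)\geqs\rho,\ |x|\geqs r\}$ is a compact subset of $\Omega\setminus\{0\}$ which contains $\partial B_r$, so by the $C^0_{loc}$ convergence there is $N$ with $\sup_{\mathcal K}|v_j-v_{j'}|<\varepsilon/4$ for $j,j'\geqs N$; the displayed estimate then gives $\sup_{B_r}|v_j-v_{j'}|<\varepsilon/2$, while $|v_j-v_{j'}|\leqs2\vartheta\rho<\varepsilon$ on $\{x\in\Omega:dist(x,\partial\Omega)<\rho\}$. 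Since $\Omega=B_r\cup\mathcal K\cup\{x\in\Omega:dist(x,\partial\Omega)<\rho\}$, this yields $\Vert v_j-v_{j'}\Vert_{L^\infty(\Omega)}<\varepsilon$ for $j,j'\geqs N$, so $\{v_j\}$ is Cauchy in $L^\infty(\Omega)$ and $T$ is compact. The single delicate point is the estimate of the middle paragraph near the origin: it is exactly there that the hypothesis $s>-s_0$ is indispensable — making the right-hand side $|x|^{2s}h$ negligible on small balls through an $L^p$ maximum principle with $p>n/2$ — and there that the fundamental solution $w_k$ plays, as in Lemma~\ref{lem52}, the role of neutralising the isolated singularity at $0$; everything else is a routine Arzel\`a--Ascoli patching.
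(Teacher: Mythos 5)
Your proof is correct and follows essentially the same strategy as the paper: the same uniform ingredients from Lemma \ref{lem52} (the $L^\infty$ bound, the barrier decay $|v_j|\leqs\vartheta\,\mathrm{dist}(x,\partial\Omega)$, interior $W^{2,p}$ bounds), the same three-region patching (boundary strip, compact set away from $0$ and $\partial\Omega$ handled by compact embedding and Arzel\`a--Ascoli, small ball about the origin handled by the Kuo--Trudinger maximum principle with the smallness of $\Vert|x|^{2s}(h_j-h_{j'})\Vert_{L^p(B_r)}$, which is exactly where $s>-s_0$ and the choice $p>n/2$ enter). The only place you deviate is the justification of the near-origin estimate: the paper applies the Kuo--Trudinger estimate to the regularized solutions $v_{j,m}$ of \eqref{formu57}, for which the operator $L_m$ is uniformly elliptic across the origin, and then lets $m\to\infty$ using $\rho_k^*(A_m)\geqs\frac1n\binom{n}{k}^{1/k}$; you instead work directly with $v_j-v_{j'}$ on the punctured ball and neutralize the isolated singularity by the $\varepsilon(w_k-c_k)$ comparison, exactly as in the uniqueness part of Lemma \ref{lem52}. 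Your variant is legitimate (apply the maximum principle on annuli $B_r\setminus\overline{B_\tau}$ to $\pm(v_j-v_{j'})+\varepsilon(w_k-c_k)$, choose $\tau$ so the inner boundary values are nonpositive, then let $\varepsilon\to0$), and it buys independence from the cut-off construction $L_m$, at the price of this small extra verification that the ABP-type estimate and the barrier can indeed be combined on the punctured domain; the paper's detour through $v_{j,m}$ avoids any maximum principle on a punctured set altogether.
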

\begin{proof}
Given a sequence of $\{h_j\}$ satisfying $\Vert h_j\Vert_{ L^\infty(\Omega)}\leqs C$, we obtain a unique solution $v_{j,m}\in W^{2,p}(\Omega)\cap C(\overline{\Omega})$ of \eqref{formu57} and $v_j\in W^{2,p}_{loc}(\Omega\setminus\{0\})\cap C(\overline{\Omega}\setminus\{0\})\cap L^{\infty}(\Omega)$ of \eqref{formu56}, with $h$ replaced by $h_j$. Moreover, the following holds with uniform constants:
\begin{align*}
&\sup_\Omega|v_j|\leqs M,\qquad |v_j(x)|\leqs\vartheta\, dist(x,\partial\Omega)\quad\text{near }\partial\Omega,\\
&\Vert v_j\Vert_{W^{2,p}(\Omega')}\leqs C(\Omega')\quad\text{for every }\Omega'\Subset\Omega\setminus\{0\}.
\end{align*}
For any $\varepsilon>0$, set $\varrho>0$ small such that $|v_j|\leqs\vartheta\varrho\leqs\varepsilon/2$ in $\widetilde\Omega_{\varrho}=\{x\in\Omega:dist(x,\partial\Omega) \leqs\varrho\}$. Denote $\Omega'=\Omega-\widetilde\Omega_{\varrho}-B_{\varrho}(0)$. Since $W^{2,p}\hookrightarrow C^{\beta}$ for some $\beta\in(0,1)$, then by Arzel\`a-Ascoli Theorem, there exists a subsequence of $\{v_j\}$ denoted by the same notion, which converges in $C(\overline{\Omega'})$. Thus, there has a $N\in\mathbb{N}$ such that
\begin{align}
|v_{j_1}-v_{j_2}|\leqs \varepsilon/2\quad\text{in }\overline{\Omega'}, \quad\text{for any }j_1,j_2\geqs N.
\label{formu59}
\end{align}
Next, applying the maximum principle in \cite{KT07maximum} to \eqref{formu57} on $B_{\varrho}=B_{\varrho}(0)$, we obtain
\begin{align*}
|v_{j_1,m}-v_{j_2,m}|\leqs\sup_{\partial B_{\varrho}}|v_{j_1,m}-v_{j_2,m}|+C\left\Vert\frac{|x|^{2s}(h_{j_1}-h_{j_2})}{\rho_k^*(A_m)}\right\Vert_{L^p(B_{\varrho})}\quad\text{in }B_{\varrho},
\end{align*}
holds for any $m$. Taking $m\to\infty$, we have by \eqref{formu59}, for $j_1,j_2\geqs N$
\begin{align*}
|v_{j_1}-v_{j_2}|\leqs\sup_{\partial B_{\varrho}}|v_{j_1}-v_{j_2}|+C\left\Vert\frac{|x|^{2s}(h_{j_1}-h_{j_2})}{\rho_k^*(A)}\right\Vert_{L^p(B_{\varrho})}<\varepsilon\quad\text{in }B_{\varrho}\setminus\{0\},
\end{align*}
given $\varrho>0$ sufficiently small. Combining these results, we have $|v_{j_1}-v_{j_2}|\leqs\varepsilon$ in $\Omega\setminus\{0\}$ for every $j_1,j_2\geqs N$. We thus infer that for every $L^\infty$-bounded sequence $\{h_j\}$, there exists a subsequence of $\{v_j\}$ converging in $L^\infty(\Omega)$. This finishes the proof.
\end{proof}

Suppose $h\in L^{\infty}(\Omega)$ is negative, then maximum principle yields that the solution $v$ of \eqref{formu56} is strictly positive inside $\Omega\setminus\{0\}$. Under this observation and using Lemma \ref{lem51}$\sim$\ref{lem53}, one can prove the following proposition.
\begin{proposition}\label{prop51}
Given the same notions as above, it holds that
\begin{enumerate}[\rm(a)]
\item There exist a positive eigenvalue $\lambda_{\varphi}$ and a positive solution $\phi\in W_{loc}^{2,p}(\Omega\setminus\{0\})\cap C(\overline{\Omega}\setminus\{0\})\cap L^{\infty}(\Omega)$, such that
\begin{equation*}
\left\{
\begin{array}{ll}
F^{ij}(D^2\varphi)\partial_{ij}\phi=-\lambda_{\varphi} |x|^{2s}\phi & \text{in }\Omega\setminus\{0\}, \\
\phi=0  & \text{on }\partial\Omega.
\end{array}
\right.
\end{equation*}
Moreover, $\lambda_{\varphi}$ is unique and $\phi$ is unique up to scalar multiplication.
\item If $v\in W^{2,p}_{loc}(\Omega\setminus\{0\})\cap C(\overline{\Omega}\setminus\{0\})\cap L^{\infty}(\Omega)$ satisfies $v\geqs0$, $v\not\equiv0$ and
\begin{equation*}
\left\{
\begin{array}{ll}
F^{ij}(D^2\varphi)\partial_{ij}v\geqs(or\leqs)-\lambda|x|^{2s}v & \text{in }\Omega\setminus\{0\},\\
v=0 & \text{on }\partial\Omega,
\end{array}
\right.
\end{equation*}
then $\lambda\geqs\lambda_{\varphi}$ $(or\ \lambda\leqs\lambda_{\varphi})$. And if $\lambda=\lambda_{\varphi}$, then $v$ is proportional to $\phi$.
\end{enumerate}
\end{proposition}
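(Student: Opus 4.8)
The plan is to recast the statement as a Krein--Rutman eigenvalue problem for a linear operator on $L^{\infty}(\Omega)$. By Lemma \ref{lem52} the linearized problem with a weighted $L^{\infty}$ datum is uniquely solvable, so we may define $T\colon L^{\infty}(\Omega)\to L^{\infty}(\Omega)$ by letting $Tg$ be the solution of $F^{ij}(D^2\varphi)\partial_{ij}v=-|x|^{2s}g$ in $\Omega\setminus\{0\}$ with $v=0$ on $\partial\Omega$; uniqueness makes $T$ linear and Lemma \ref{lem53} makes it compact. Write $L=F^{ij}(D^2\varphi)\partial_{ij}$. If $g\geq0$ then $Lv\leq0$, so $v\geq0$ by the maximum principle; if in addition $g\not\equiv0$ then $Lv\not\equiv0$ and the strong maximum principle on the connected open set $\Omega\setminus\{0\}$ gives $Tg>0$ there. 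Hence $T$ is a compact, positive, irreducible operator on the Banach lattice $L^{\infty}(\Omega)$.

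For part (a): by the Krein--Rutman theorem in the form valid for irreducible compact positive operators, the spectral radius $r:=r(T)$ is positive and is a geometrically simple eigenvalue of $T$ with a positive eigenfunction $\phi\in L^{\infty}(\Omega)$, unique up to scaling; moreover $r$ is the only eigenvalue of $T$ possessing a nonnegative eigenfunction, and the associated eigenfunctional $\ell$ of $T^{*}$ is strictly positive (i.e.\ $\langle\ell,g\rangle>0$ for every $g\geq0$, $g\not\equiv0$). (Positivity of $r$ is also elementary: for a ball $B\Subset\Omega\setminus\{0\}$ the function $T\mathbf{1}_{B}$ is continuous and strictly positive on $\overline{B}$, hence $T\mathbf{1}_{B}\geq\gamma\,\mathbf{1}_{B}$ for some $\gamma>0$, so $r\geq\gamma$.) Set $\lambda_{\varphi}:=1/r>0$. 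Since $\phi=r^{-1}T\phi$, Lemma \ref{lem52} places $\phi$ in $W^{2,p}_{loc}(\Omega\setminus\{0\})\cap C(\overline{\Omega}\setminus\{0\})\cap L^{\infty}(\Omega)$, and $T\phi=r\phi$ translates, via the definition of $T$, into $F^{ij}(D^2\varphi)\partial_{ij}\phi=-\lambda_{\varphi}|x|^{2s}\phi$ in $\Omega\setminus\{0\}$ with $\phi=0$ on $\partial\Omega$. From $L\phi\leq0$ and $\phi\not\equiv0$ the strong maximum principle gives $\phi>0$ in $\Omega\setminus\{0\}$, while Lemma \ref{lem51} yields $\phi\geq\theta\,\mathrm{dist}(\cdot,\partial\Omega)$ near $\partial\Omega$. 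The uniqueness of $\lambda_{\varphi}$ and of $\phi$ up to scalars is precisely the geometric simplicity above.

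For part (b): let $v\geq0$, $v\not\equiv0$, $v=0$ on $\partial\Omega$ satisfy $F^{ij}(D^2\varphi)\partial_{ij}v\geq-\lambda|x|^{2s}v$ (the opposite inequality is symmetric). Put $w:=\lambda Tv-v$ (resp.\ $w:=v-\lambda Tv$); then $w\in L^{\infty}(\Omega)$, $w=0$ on $\partial\Omega$, and using $L(Tv)=-|x|^{2s}v$ one computes $Lw\leq0$ in $\Omega\setminus\{0\}$. Comparing $w$ with $\varepsilon(w_{k}-c_{k})$ — which is $L$-subharmonic by concavity of $F=S_{k}^{1/k}$, which is $\leq0=w$ on $\partial\Omega$ for the shift $c_{k}$ of Lemma \ref{lem52}, and which tends to $-\infty$ at the origin because $k\leq n/2$ — and then sending $\varepsilon\to0$, exactly as in the uniqueness step of Lemma \ref{lem52}, gives $w\geq0$; that is, $Tv\geq\tfrac1\lambda v$ (resp.\ $Tv\leq\tfrac1\lambda v$). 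In the first case iteration gives $\|T^{n}\|\geq\lambda^{-n}$, hence $r\geq\tfrac1\lambda$, i.e.\ $\lambda\geq\lambda_{\varphi}$; in the second, pairing with $\ell$ gives $r\langle\ell,v\rangle=\langle\ell,Tv\rangle\leq\tfrac1\lambda\langle\ell,v\rangle$ with $\langle\ell,v\rangle>0$, hence $\lambda\leq\lambda_{\varphi}$. If $\lambda=\lambda_{\varphi}$ then $Tv-rv\geq0$ has $\langle\ell,Tv-rv\rangle=0$, forcing $Tv=rv$, and geometric simplicity forces $v=c\phi$.

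The main obstacle is the origin. When $s<0$ the coefficients $F^{ij}(D^2\varphi)$ may degenerate at $0$, so neither the strong maximum principle nor the Hopf lemma applies there, and the implication ``$Lw\leq0$, $w|_{\partial\Omega}=0$, $w$ bounded $\Rightarrow w\geq0$'' — the step that converts the differential inequalities into the operator inequalities $Tv\gtrless\tfrac1\lambda v$ — is not a consequence of the classical maximum principle. This is exactly why one works on $L^{\infty}(\Omega)$ rather than on a space of continuous functions, and why the fundamental solution $w_{k}$ is indispensable: being $L$-subharmonic (by concavity of $F$) and, for $k\leq n/2$, divergent to $-\infty$ at $0$, it serves as a barrier that makes the origin a removable ``boundary point'', as in Lemma \ref{lem52}. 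The remaining work is bookkeeping: checking that the abstract theory is applied on the correct Banach lattice and that irreducibility — inherited from the strong maximum principle on the connected set $\Omega\setminus\{0\}$ — indeed delivers geometric simplicity of the principal eigenvalue and the strict positivity of $\ell$ used in the ``$\leq$'' direction.
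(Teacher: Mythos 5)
Your overall architecture is the intended one (and the paper itself only sketches it, deferring to Lemmas \ref{lem51}--\ref{lem53} and to Wang's Theorem 4.1): define the solution operator $T$ via Lemma \ref{lem52}, get compactness from Lemma \ref{lem53}, obtain positivity and the conversion of the differential inequalities into $Tv\geqs\lambda^{-1}v$ (resp. $\leqs$) by the comparison argument with $\varepsilon(w_k-c_k)$ at the punctured origin, and bound $r(T)$ from below by testing on $\mathbf{1}_B$. All of those steps are sound. The genuine gap is the functional-analytic core: you invoke the Krein--Rutman theorem ``for irreducible compact positive operators'' on the Banach lattice $L^{\infty}(\Omega)$, but your $T$ is \emph{not} irreducible there, nor does it map the cone into its interior. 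The interior of $L^{\infty}_{+}$ consists of functions bounded below by a positive constant, while $Tg$ satisfies $|Tg|\leqs\vartheta\,\mathrm{dist}(\cdot,\partial\Omega)$ near $\partial\Omega$ (this is \eqref{formu58}); for the same reason the set $J=\{f\in L^{\infty}(\Omega):\ |f|\leqs\epsilon\ \text{a.e. near }\partial\Omega\ \text{for every }\epsilon>0\}$ is a nontrivial closed ideal of $L^{\infty}(\Omega)$ that is invariant under $T$, so lattice irreducibility fails. Consequently the three conclusions you extract from the strong form of Krein--Rutman --- geometric simplicity of $r(T)$, ``$r(T)$ is the only eigenvalue with a nonnegative eigenvector'', and strict positivity of the adjoint eigenfunctional $\ell$ --- are not justified as stated, and these are exactly what your proof of uniqueness in (a), of the direction $\lambda\leqs\lambda_{\varphi}$, and of the equality case in (b) rests on. (The direction $\lambda\geqs\lambda_{\varphi}$, which only uses $T^nv\geqs\lambda^{-n}v$ and $r(T)\geqs1/\lambda$, survives.)

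The repair is the one the paper's lemmas are designed for, and it is where Lemma \ref{lem51} must actually do work rather than appear decoratively. Use only the weak Krein--Rutman statement (a compact positive operator with $r(T)>0$ has a nonnegative eigenvector for $r(T)$), get $\phi>0$ in $\Omega\setminus\{0\}$ from the strong maximum principle on compact subsets of $\Omega\setminus\{0\}$ where the linearized operator is uniformly elliptic, and then prove simplicity and the remaining assertions of (b) by a sliding comparison: for $v$ as in (b) both $v$ and $\phi$ satisfy $F^{ij}(D^2\varphi)\partial_{ij}(\cdot)\leqs0$, so Lemma \ref{lem51} gives linear lower bounds $v,\phi\geqs\theta\,\mathrm{dist}(\cdot,\partial\Omega)$ near $\partial\Omega$; hence $t_0=\sup\{t>0:\ v\geqs t\phi\ \text{in }\Omega\setminus\{0\}\}$ is finite and positive, and applying the concavity-based differential inequality to $v-t_0\phi$, the $w_k$-barrier at the origin (as in your part (b)), the strong maximum principle in $\Omega\setminus\{0\}$, and the Hopf behaviour at $\partial\Omega$ forces either the desired inequality between $\lambda$ and $\lambda_{\varphi}$ or $v\equiv t_0\phi$ in the equality case; uniqueness of $\lambda_{\varphi}$ and of $\phi$ up to scaling follows by running the same comparison with two eigenpairs. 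With that replacement your proposal becomes a complete proof along the paper's intended lines.
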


We conclude this section by proving the uniqueness result for the case $k\leqs n/2$.
\\[0.5em]
\textbf{Proof of Uniqueness Result for $k\leqs n/2$:}

Assume that $(\lambda_1,\varphi_1)$ and $(\lambda^*,\varphi^*)$ are both nontrivial solutions of \eqref{formu51}. Without loss of generality, we suppose $\lambda^*\geqs\lambda_1$ and $\varphi^*\leqs\varphi_1$. Then by the linearity of $F$, we have
\begin{align*}
F^{ij}(D^2\varphi_1)\partial_{ij}\varphi_1=F(D^2\varphi_1)=-\lambda_1|x|^{2s}\varphi_1\quad\text{in }\Omega\setminus\{0\},
\end{align*}
and $\varphi_1\leqs0$, $\varphi_1\not\equiv0$. According to Proposition \ref{prop51}(a), $\lambda_1=\lambda_{\varphi_1}$ is uniquely determined. On the other hand, by the concavity of $F$ we have
\begin{align*}
F^{ij}(D^2\varphi_1)\partial_{ij}&(\varphi_1-\varphi^*)\leqs F(D^2\varphi_1)-F(D^2\varphi^*)  \\
&=|x|^{2s}(-\lambda_1\varphi_1 +\lambda^*\varphi^*)\leqs-\lambda^*|x|^{2s}(\varphi_1-\varphi^*)\qquad\text{in }\Omega\setminus\{0\}.
\end{align*}
Since $\varphi_1-\varphi^*\geqs0$ in $\Omega\setminus\{0\}$ and $\varphi_1-\varphi^*=0$ on $\partial\Omega$, we apply Proposition \ref{prop51}(b) to deduce $\lambda^*\leqs\lambda_{\varphi_1}=\lambda_1$. Thus, we obtain $\lambda^*=\lambda_1$ and then $\varphi^*=c\varphi_1$ for some $c>0$. We finally complete the proof. \hfill $\square$

\begin{remark}
The procedure of mapping $h\mapsto v$ for the case $k\leqs n/2$ can be extended to a more general class of linear elliptic operators, whose coefficient matrix belongs to the set $V_k=V_k(\Omega)$ given by
\begin{align*}
V_k=\bigg\{A=(a_{ij}):&(a_{ij})=(a_{ji})>0\text{ in }\overline\Omega\setminus\{0\},\\[-10pt]
&a_{ij}\in C(\overline\Omega\setminus\{0\}), A\in\Gamma_k^*,\text{ and }\rho_k^*(A)\geqs\frac1n\binom{n}{k}^{1/k}\bigg\}.
\end{align*}
If we further denote the eigenvalue $\lambda_A$ as in Proposition \ref{prop51}, then using a similar argument of \cite{Le22spectral}, we can prove the spectral characterization
\begin{align*}
\lambda_1=\inf_{A\in V_k}\lambda_A
\end{align*}
for the eigenvalue $\lambda_1$ of the weighted problem \eqref{formu51}.
\end{remark}

\section{Functional Feature}
For $u\in\Phi_0^k(\Omega)$, we consider the functional $I_k$ given by
\begin{align*}
I_k(u)=\int_{\Omega}(-u)S_k(D^2u)dx.
\end{align*}
By integrating by parts, we have
\begin{align*}
I_k(u)=\frac1k\int_{\Omega}u_iu_jS^{ij}_k(D^2u)dx.
\end{align*}
Denote $\Vert u\Vert_{\Phi_0^k(\Omega)}=[I_k(u)]^{1/(k+1)}$. In \cite{Wang94class}, Wang verified that $\Vert u\Vert_{\Phi_0^k(\Omega)}$ is a norm in $\Phi_0^k(\Omega)$ and obtained Sobolev-type inequalities for the functional $I_k(u)$. We also denote 
\begin{align*}
\Vert u\Vert_{L^{p+1}(\Omega;|x|^{2sk})}=\left(\int_{\Omega}|x|^{2sk}|u|^{p+1}dx\right)^{1/(p+1)}.
\end{align*}
In this section, we will prove a weighted embedding for $u\in\Phi_0^k(\Omega)$ and obtain the formula \eqref{formu05}. Before that, we first introduce two lemmas which are proven in \cite{Wang94class}.
\begin{lemma}\label{lem41}
Suppose that $\psi(x,u)\geqs0$ is nonincreasing for $u\leqs0$. Suppose also that $\psi(x,u)\in C^{1,1}(\overline\Omega\times\mathbb{R})$ is strictly concave with respect to $u$. Then there exists at most one nontrivial solution to the Dirichlet problem
\begin{equation*}
\left\{
\begin{array}{ll}
S_k(D^2u)=[\psi(x,u)]^k &\text{in }\Omega,\\
u=0 &\text{on }\partial\Omega.
\end{array}
\right.
\end{equation*}
\end{lemma}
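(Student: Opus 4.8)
The plan is a contradiction argument exploiting the positive homogeneity $F(\theta A)=\theta F(A)$ for $\theta>0$ and the concavity of $F=S_k^{1/k}$, together with the monotonicity and strict concavity of $u\mapsto\psi(x,u)$, with \emph{strict} concavity invoked only at the very last step. First I would record the shape of an arbitrary nontrivial $k$-admissible solution $u$. Since $\lambda(D^2u)\in\Gamma_k\subset\Gamma_1$ we have $\Delta u>0$, so $u$ is strictly subharmonic, and $u=0$ on $\partial\Omega$ forces $u<0$ in $\Omega$ together with $u\leqs-c\,dist(x,\partial\Omega)$ near $\partial\Omega$ for some $c>0$ (Hopf's lemma for the Laplacian). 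Moreover, a nonnegative strictly concave function on $(-\infty,0]$ cannot vanish at an interior point, so $\psi(x,u)>0$ whenever $u<0$; hence $S_k(D^2u)>0$ and $F^{ij}(D^2u)$ is positive definite in $\Omega$, and since $\det(F^{ij})\geqs C(n,k)>0$ while $\sum_iF^{ii}$ stays bounded on compacta, the operator $F^{ij}(D^2u)\partial_{ij}$ is uniformly elliptic on compact subsets of $\Omega$.

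Now suppose $u_1\neq u_2$ are two nontrivial solutions and set $\theta_0=\sup\{\theta>0:u_1\leqs\theta u_2\ \text{in }\Omega\}=\inf_\Omega(u_1/u_2)$. The Hopf estimates give $\theta_0\in(0,\infty)$; interchanging $u_1$ and $u_2$ if necessary (if $\theta_0>1$ then $u_1\leqs u_2$, and since $u_1\not\equiv u_2$ the corresponding supremum for the pair $(u_2,u_1)$ is $<1$) we may assume $\theta_0\leqs1$. Put $w=\theta_0u_2-u_1\geqs0$. By homogeneity $F(D^2(\theta_0u_2))=\theta_0\psi(x,u_2)$, and concavity of $F$ gives $F(D^2u_1)\leqs F(D^2(\theta_0u_2))+F^{ij}(D^2(\theta_0u_2))\big((u_1)_{ij}-\theta_0(u_2)_{ij}\big)$, so that
\[
F^{ij}(D^2(\theta_0u_2))\,w_{ij}\ \leqs\ \theta_0\psi(x,u_2)-\psi(x,u_1).
\]
Since $0<\theta_0\leqs1$ and $\psi(x,0)\geqs0$, concavity of $\psi(x,\cdot)$ gives $\psi(x,\theta_0u_2)\geqs\theta_0\psi(x,u_2)$, while $u_1\leqs\theta_0u_2$ and monotonicity of $\psi(x,\cdot)$ give $\psi(x,u_1)\geqs\psi(x,\theta_0u_2)$; hence the right-hand side above is $\leqs0$, and $w\geqs0$ satisfies $Lw\leqs0$ with $L=F^{ij}(D^2(\theta_0u_2))\partial_{ij}$ locally uniformly elliptic.

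By the strong maximum principle, either $w\equiv0$ in $\Omega$ or $w>0$ in $\Omega$. In the latter case Hopf's lemma, together with the interior positivity of $w$, forces $w\geqs c_1|u_2|$ in $\Omega$ for some $c_1>0$, i.e.\ $u_1\leqs(\theta_0+c_1)u_2$, contradicting the maximality of $\theta_0$. Thus $w\equiv0$, i.e.\ $u_1\equiv\theta_0u_2$ in $\Omega$. Substituting back and using homogeneity again, $\psi(x,u_1)=F(D^2u_1)=\theta_0F(D^2u_2)=\theta_0\psi(x,u_1/\theta_0)$ for all $x\in\Omega$. At any interior point $u_1(x)<0$, so if $\theta_0<1$ then \emph{strict} concavity of $\psi(x,\cdot)$ with $\psi(x,0)\geqs0$ gives $\psi(x,u_1)=\psi\big(x,\theta_0\cdot(u_1/\theta_0)+(1-\theta_0)\cdot0\big)>\theta_0\psi(x,u_1/\theta_0)$, a contradiction. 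Hence $\theta_0=1$ and $u_1\equiv u_2$, completing the proof.

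The main obstacle is running the dichotomy $w\equiv0$ versus $w>0$ correctly: it relies on the Hopf boundary behaviour of nontrivial solutions, and it requires some care when $\psi(\cdot,0)$ vanishes on part of $\partial\Omega$ (so that $L$ degenerates near the boundary), which can be handled by comparing $w$ with a barrier built from $dist(\cdot,\partial\Omega)$ using the strict $(k-1)$-convexity of $\Omega$, as in Lemma~\ref{lem52}. The other point requiring attention is the order of the inequalities in the second paragraph, arranged so that plain concavity and monotonicity of $\psi$ already yield $Lw\leqs0$ and \emph{strict} concavity enters only in the final contradiction; everything else follows from the homogeneity and concavity of $F$ and the facts recorded in Section~2.
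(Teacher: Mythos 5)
The paper does not actually prove this lemma: it is quoted from \cite{Wang94class}, so your argument has to stand on its own. Its core — defining $\theta_0=\inf_\Omega(u_1/u_2)$, using homogeneity and concavity of $F$ plus monotonicity and concavity of $\psi(x,\cdot)$ to get $F^{ij}(D^2(\theta_0u_2))\partial_{ij}w\leqs0$ for $w=\theta_0u_2-u_1\geqs0$, and reserving strict concavity for the final step $\theta_0=1$ — is the standard route and is correctly executed: the inequality chain $\psi(x,u_1)\geqs\psi(x,\theta_0u_2)\geqs\theta_0\psi(x,u_2)$, the dichotomy from the strong maximum principle, and the last contradiction via $\psi(x,u_1)>\theta_0\psi(x,u_1/\theta_0)$ are all sound, as is the observation that strict concavity plus $\psi\geqs0$ forces $\psi(x,u)>0$ for $u<0$.

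The one step that is genuinely under-justified is the boundary analysis. You need two quantitative boundary facts: $|u_i|\leqs C\,dist(x,\partial\Omega)$ (to make $\theta_0>0$ and to convert $w\geqs c\,dist(x,\partial\Omega)$ into $w\geqs c_1|u_2|$), which is a boundary Lipschitz bound coming from Theorem \ref{thm31}(i) or from a boundary barrier using the strict $(k-1)$-convexity of $\Omega$, not from Hopf's lemma for the Laplacian; and, more seriously, the Hopf-type estimate $w\geqs c\,dist(x,\partial\Omega)$ for the linearized operator $L=F^{ij}(D^2(\theta_0u_2))\partial_{ij}$, which may degenerate near $\partial\Omega$ when $\psi(x,0)$ vanishes there. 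Your pointer to Lemma \ref{lem52} is the wrong tool: the barriers $\pm\vartheta\sigma$ there bound a solution \emph{above} by $\vartheta\,dist(x,\partial\Omega)$ in absolute value, whereas here you need a \emph{sub}solution of $L$ lying below the nonnegative supersolution $w$; the relevant construction is the one in Lemma \ref{lem51}, with barrier $-\varepsilon u_2+\varrho\eta$, and even that construction uses that $F(D^2\varphi)$ is small in the boundary collar (the right-hand side vanishes with $\varphi$), which is not automatic under the hypotheses of Lemma \ref{lem41} since $\psi(x,0)$ need not be small. In the situations where the lemma is actually invoked in Section 6, $\psi$ is strictly positive on $\overline\Omega$ (the $\delta$-regularization), so by the last assertion of Theorem \ref{thm31} the solutions are $C^{3,\alpha}(\overline\Omega)$, $L$ is uniformly elliptic up to the boundary, and the classical Hopf lemma closes your argument; but in the generality in which you (and the statement) phrase it, this boundary step needs either that extra positivity/regularity assumption made explicit or a genuine barrier construction, and as written it is a gap.
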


\begin{lemma}\label{lem42}
Let $\Omega$ be a smooth strictly $(k-1)$-convex bounded domain. Denote $Q=\Omega\times(0,\infty)$. Consider the initial boundary value problem
\begin{equation}
\left\{
\begin{array}{ll}
\mu(S_k(D^2u))-u_t=g(x,t,u) &\text{in }Q,\\
u=\phi &\text{on }\partial Q,
\end{array}
\right.
\label{formu41}
\end{equation}
where $\phi\in C^{4,3}(\overline Q)$, $g\in C^2(\overline Q\times\mathbb{R})$ and $\mu(z)=\log z$. Suppose that $\phi(x,0)\in \Phi^k(\Omega)$ satisfies the compatibility condition
\begin{align}
\mu(S_k(D^2\phi))=g(x,t,\phi)\quad\text{on }\partial\Omega\times\{t=0\}.
\label{formu42}
\end{align}
Suppose also that there exist positive constants $C_1,C_2$ such that
\begin{align*}
g(x,t,u)\leqs C_1+C_2|u|\quad\forall(x,t,u)\in\overline Q\times\mathbb{R}.
\end{align*}
Then there exists a k-admissible solution $u\in C^{3+\alpha,1+\alpha/2}(\overline Q)$ of \eqref{formu41} for some $\alpha>0$. Furthermore, if $C_2=0$, $\Vert\phi\Vert_{C^{4,3}(\overline Q)}<\infty$ and $g$ is irrelevant to $t$, then we have the uniform estimate $\Vert u\Vert_{C^{3+\alpha,1+\alpha/2}(\overline Q)}\leqs C$ for some constant $C>0$.
\end{lemma}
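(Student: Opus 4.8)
The plan is to treat \eqref{formu41} as a fully nonlinear parabolic initial-boundary value problem and run the classical continuity scheme: short-time existence, uniform a priori estimates up to the maximal time of existence, and continuation. Write the equation as $u_t=\mu(S_k(D^2u))-g(x,t,u)$. Along $k$-admissible $u$ the linearized principal part is $\partial_t-\frac{S_k^{ij}(D^2u)}{S_k(D^2u)}\partial_{ij}$, whose spatial part is positive definite since $(S_k^{ij})>0$ on $\Gamma_k$; hence the problem is (nondegenerate) parabolic as long as the solution stays strictly $k$-admissible. Moreover, since $F=S_k^{1/k}$ is concave on $\Phi^k(\Omega)$ (Section 2) and $\mu=\log$ is concave and increasing, the operator $D^2u\mapsto\mu(S_k(D^2u))=k\log F(D^2u)$ is concave, which lets us invoke the parabolic Evans--Krylov theorem at the end.

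\emph{Short-time existence.} Linearizing around the initial datum $\phi$ and using the compatibility condition \eqref{formu42} to place the data in the correct parabolic Hölder class, the inverse function theorem in the spaces $C^{3+\alpha,1+\alpha/2}$ (equivalently a Schauder fixed-point argument built on linear parabolic theory) produces a solution $u\in C^{3+\alpha,1+\alpha/2}(\overline\Omega\times[0,T_0])$ on a short interval, strictly $k$-admissible there.

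\emph{A priori estimates on $[0,T]$.} Let $[0,T]$ lie in the maximal interval of existence. For the $C^0$ bound, $\lambda(D^2u)\in\Gamma_k\subset\Gamma_1$ gives $\Delta u\geqs0$, so $u$ is bounded above by the harmonic extension of its boundary data; the growth hypothesis $g\leqs C_1+C_2|u|$ yields an upper bound for $\mu(S_k(D^2u))=u_t+g$ and, via comparison with solutions of $S_k(D^2w)=\mathrm{const}$, a lower $L^\infty$ bound (with a Gronwall-type dependence on $T$). Differentiating the equation in $t$, $v=u_t$ solves the linear parabolic equation $\tfrac{S_k^{ij}}{S_k}v_{ij}-v_t=g_t+g_uv$; the maximum principle, with $v|_{t=0}=\mu(S_k(D^2\phi(\cdot,0)))-g(\cdot,0,\phi)$ and $v|_{\partial\Omega}=\phi_t$ both controlled, bounds $\|u_t\|_{L^\infty}$, hence pins $S_k(D^2u)=e^{u_t+g}$ between two positive constants; together with the Newton--Maclaurin inequalities this keeps $\lambda(D^2u)$ in a compact subset of $\Gamma_k$. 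The gradient estimate follows from a Bernstein-type maximum-principle argument on an auxiliary function of the form $|Du|^2e^{(\cdot)}$ in the interior, and from the barriers afforded by the strict $(k-1)$-convexity of $\Omega$ near $\partial\Omega$, as in \cite{CNS85Dirichlet}. The crucial step is the global bound $|D^2u|\leqs C$: on the parabolic boundary it comes from a barrier construction using the $(k-1)$-convexity (the parabolic analogue of \cite{CNS85Dirichlet, Ivochkina85Dirichlet}), and in the interior from the maximum principle applied to $\log u_{\xi\xi}$ plus lower-order corrections, where the concavity of $D^2u\mapsto\mu(S_k(D^2u))$ supplies the decisive good term. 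Once $|D^2u|$ is bounded the equation is uniformly parabolic with a concave operator, so parabolic Evans--Krylov gives a global $C^{2+\alpha,1+\alpha/2}$ bound; differentiating the equation and applying parabolic Schauder theory (using $g\in C^2$ and $\phi\in C^{4,3}$) promotes this to the claimed $C^{3+\alpha,1+\alpha/2}$ bound.

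\emph{Continuation and the uniform estimate.} Since no norm degenerates — in particular $S_k(D^2u)$ stays bounded away from $0$ — the maximal existence time is $+\infty$, which proves the first assertion. For the second, assume $C_2=0$ and $g$ independent of $t$ with $\|\phi\|_{C^{4,3}(\overline Q)}<\infty$. Then $g\leqs C_1$ forces $S_k(D^2u)\leqs e^{C_1}$ for all $t$, and comparison (above, the harmonic extension; below, the subsolution of $S_k(D^2w)=e^{C_1}$ taking the least boundary value of $\phi$) gives a time-independent $L^\infty$ bound. With $g_t\equiv0$ the equation for $v=u_t$ has time-independent structure and bounded initial and boundary data, so $\|u_t\|_{L^\infty(Q)}$ is bounded uniformly in $t$, whence $S_k(D^2u)$ is pinched between fixed positive constants on all of $Q$. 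Every subsequent estimate — the boundary gradient and second-derivative barriers, the interior Bernstein and auxiliary-function $C^2$ estimates, Evans--Krylov, Schauder — then depends only on these now-uniform quantities and on the uniform parabolicity constants, so $\|u\|_{C^{3+\alpha,1+\alpha/2}(\overline Q)}\leqs C$. The main obstacle throughout is the global second-derivative estimate: producing the correct boundary barriers from the $(k-1)$-convexity and extracting enough positivity from the concavity of $\log S_k$ for the interior bound, while simultaneously guaranteeing that the flow never leaves the open cone $\Gamma_k$.
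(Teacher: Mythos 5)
The paper does not actually prove this lemma: it is imported verbatim from \cite{Wang94class}, with \cite{Tso90functional,TW98poincare} cited for the underlying parabolic theory, so the only meaningful comparison is with that standard machinery. Your outline (short-time existence via linearization and compatibility, $C^0$ bounds by barriers, $u_t$ bound by differentiating in $t$, gradient and second-derivative estimates with boundary barriers built from the strict $(k-1)$-convexity, parabolic Evans--Krylov and Schauder, continuation) is indeed the scheme of those references, and for the first assertion of the lemma it is a reasonable sketch.

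However, the part the paper actually uses --- the uniform-in-time estimate when $C_2=0$ and $g$ is independent of $t$ --- is not correctly justified. First, the claim that $g\leqs C_1$ ``forces $S_k(D^2u)\leqs e^{C_1}$'' is false as written: the equation gives $\log S_k(D^2u)=u_t+g$, so an upper bound on $S_k$ already presupposes an upper bound on $u_t$. Second, and more seriously, your time-uniform bound on $u_t$ rests on the plain maximum principle for $v=u_t$, which solves $\tfrac{S_k^{ij}}{S_k}\partial_{ij}v-v_t=g_u(x,u)\,v$; since only $g\leqs C_1$ is assumed and $g_u$ has no sign (in the paper's application $g=\log\psi_{M,\delta}$ with $f_M$ non-monotone in $u$), the maximum principle only yields $|u_t|\leqs Ce^{\Lambda t}$ with $\Lambda\sim\sup|g_u|$, not a bound independent of $t$. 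Since the positive lower bound on $S_k(D^2u)$ --- hence uniform parabolicity and all subsequent $C^2$, Evans--Krylov and Schauder constants --- hinges on exactly this two-sided $u_t$ bound, the uniform estimate $\Vert u\Vert_{C^{3+\alpha,1+\alpha/2}(\overline Q)}\leqs C$ is asserted rather than proved; some additional ingredient is needed (a monotonicity structure in $u$, the descent/variational structure of the flow, or genuinely time-interior arguments), which is precisely the content one must take from \cite{Wang94class}. A smaller instance of the same circularity occurs in the first part: the static lower barrier $S_k(D^2w)=\Lambda$ requires $\log\Lambda\geqs g(x,w)$, and with $C_2>0$ the condition $\log\Lambda\geqs C_1+C_2\sup|w|\sim C_1+C_2\Lambda^{1/k}$ fails for large $\Lambda$, so the $C^0$ and $u_t$ bounds on $[0,T]$ must be closed jointly (your ``Gronwall-type'' remark), e.g. by iterating over short time intervals.
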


The previous lemma includes the a priori estimates and existence results of solutions to parabolic Hessian equation. Indeed, we say a function $u(x,t)$ is $k$-admissible with respect to the equation \eqref{formu41} if for any given $t\geqs0$, $u(\cdot,t)$ is $k$-admissible. The function $\mu(z)=\log z$ can be replaced by any function $\mu$ satisfying $\mu'(z)>0, \mu''(z)<0$ for all $z>0$,
\begin{align}
\mu(z)\to-\infty\text{ as }z\to0^+,\quad\mu(z)\to+\infty\text{ as }z\to+\infty,
\label{formu43}
\end{align}
and $\mu(\sigma_k(\lambda))$ is concave with respect to $\lambda$. We note that condition \eqref{formu43} is to guarantee $\sigma_k(\lambda)>0$ and hence the admissibility keeps at all time. We refer the reader to \cite{Tso90functional,TW98poincare} for more details on this type of nonlinear parabolic equations. 

The main result of this section is as follows.

\begin{thm}\label{thm41}
Consider $s>-s_0$, where $s_0=\min(1,n/2k)$. Then for any $u\in\Phi_0^k(\Omega)$, we have
\begin{align}
\Vert u\Vert_{L^{p+1}(\Omega;|x|^{2sk})}\leqs C\Vert u\Vert_{\Phi_0^k(\Omega)}\qquad\text{for }p\in[0,k],
\label{formu44}
\end{align}
where the constant $C$ depends only on $n,k,s,p$ and $\Omega$. Moreover, we have
\begin{align}
\inf_{u\in\Phi_0^k(\Omega)}\{\Vert u\Vert_{\Phi_0^k(\Omega)}/
\Vert u\Vert_{L^{k+1}(\Omega;|x|^{2sk})}\}=\lambda_1^{k/(k+1)},
\label{formu45}
\end{align}
where $\lambda_1$ is the eigenvalue given in Theorem \ref{thm32}.
\end{thm}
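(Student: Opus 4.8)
The plan is to establish the two statements of Theorem \ref{thm41} separately, first the weighted embedding inequality \eqref{formu44} and then the variational identity \eqref{formu45}, which should follow quickly once \eqref{formu44} is available together with the eigenfunction constructed in Theorem \ref{thm32}. For \eqref{formu44}, I would reduce to the case $p = k$, since for $0 \leqs p < k$ one can control $\Vert u\Vert_{L^{p+1}(\Omega;|x|^{2sk})}$ by $\Vert u\Vert_{L^{k+1}(\Omega;|x|^{2sk})}$ via H\"older's inequality against the finite measure $|x|^{2sk}\,dx$ (finiteness here uses $2sk > -n$, i.e. $s > -n/2k$, which is implied by $s > -s_0$). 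For $p=k$, the natural route is a smoothing/approximation argument: given $u \in \Phi_0^k(\Omega)$, approximate it by a sequence of smooth $k$-admissible functions $u_\delta$ vanishing on $\partial\Omega$ (or pass to the $\delta$-regularized equation as in Section 3), prove the inequality for the smooth approximants with a $\delta$-independent constant, and then pass to the limit using Fatou on the left and the convergence $I_k(u_\delta)\to I_k(u)$ on the right. To prove the inequality for smooth $u$, I would solve the auxiliary Dirichlet problem $S_k(D^2 w) = |x|^{2sk}(-u)^k$ in $\Omega$, $w=0$ on $\partial\Omega$ (solvable by Theorem \ref{thm31}, since the right side is $C^{1,1}$ after the regularization and positive where $u<0$), obtain the uniform $L^\infty$ bound $\Vert w\Vert_{L^\infty} \leqs C$ from the $L^\infty$-estimate of \cite{CW01variational} as used in Step 1 of Theorem \ref{thm32}, and then integrate by parts:
\begin{align*}
\int_\Omega |x|^{2sk}(-u)^{k+1}\,dx = \int_\Omega (-u)\,S_k(D^2 w)\,dx \leqs \big(I_k(u)\big)^{1/(k+1)}\big(I_k(w)\big)^{k/(k+1)},
\end{align*}
where the last step is the Sobolev-type (Hölder) inequality for the functional $I_k$ established by Wang in \cite{Wang94class}; combining with $I_k(w) \leqs C\Vert w\Vert_{L^\infty}\cdot\!\int |x|^{2sk}(-u)^k \leqs C\Vert u\Vert_{L^{k+1}(\Omega;|x|^{2sk})}^{k}$ and rearranging yields \eqref{formu44} with $p=k$.

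For the identity \eqref{formu45}, the inequality ``$\geqs$'' is the content of \eqref{formu44} with $p=k$ and the optimal constant: if $\Vert u\Vert_{\Phi_0^k}/\Vert u\Vert_{L^{k+1}(\Omega;|x|^{2sk})} < \lambda_1^{k/(k+1)}$ for some admissible $u$, one would use $u$ as a competitor to drive the infimum in \eqref{formu05} below $\lambda_1^k$, contradicting Theorem \ref{thm32}; more directly, \eqref{formu05} of Theorem \ref{thm01} and \eqref{formu45} are the same statement after the substitution $\Vert u\Vert_{\Phi_0^k}^{k+1} = I_k(u) = \int(-u)S_k(D^2u)$ and the normalization $\int|x|^{2sk}|u|^{k+1}=1$. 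So the real task is the sharp lower bound for the Rayleigh quotient, i.e. that no admissible $u$ does better than $\varphi_1$. For the reverse inequality ``$\leqs$'', the plan is to test with the eigenfunction $\varphi_1$: since $\varphi_1 \in \Upsilon(\Omega) \subset \Phi_0^k(\Omega)$ solves $S_k(D^2\varphi_1) = (|x|^{2s}\lambda_1|\varphi_1|)^k = \lambda_1^k |x|^{2sk}|\varphi_1|^k$, one computes directly
\begin{align*}
I_k(\varphi_1) = \int_\Omega (-\varphi_1)S_k(D^2\varphi_1)\,dx = \lambda_1^k\int_\Omega |x|^{2sk}|\varphi_1|^{k+1}\,dx,
\end{align*}
so $\Vert\varphi_1\Vert_{\Phi_0^k}/\Vert\varphi_1\Vert_{L^{k+1}(\Omega;|x|^{2sk})} = \lambda_1^{k/(k+1)}$ exactly, showing the infimum is attained and equals $\lambda_1^{k/(k+1)}$.

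The main obstacle, and the step requiring the most care, is establishing the sharp lower bound for the Rayleigh quotient — equivalently, proving that $\inf_{u}\{\int(-u)S_k(D^2u) : \int|x|^{2sk}|u|^{k+1}=1\} = \lambda_1^k$ rather than merely $\leqs \lambda_1^k$. The clean way to get this is the flow/duality argument of Wang \cite{Wang94class} and Tso \cite{Tso90functional}: one sets up the parabolic Hessian flow of Lemma \ref{lem42} with $\mu(z) = \log z$ and a right-hand side corresponding to $g = \log\big(|x|^{2sk}c^k(-u)^k\big)$ (for a suitable normalizing constant $c$), shows that along the flow the Rayleigh quotient is monotone, and identifies the limit as a solution of the eigenvalue problem \eqref{formu51}; by the uniqueness result of Section 5 this limit must be $\lambda_1$. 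One subtlety is that the weight $|x|^{2sk}$ is singular (for $s<0$) or degenerate (for $s>0$) at the origin, so the parabolic a priori estimates of Lemma \ref{lem42} do not apply verbatim; the remedy is to run the flow for the $\delta$-regularized weight $(|x|^2+\delta^2)^{sk}$, which is $C^{1,1}$ and bounded below on $\overline\Omega$, obtain the monotonicity and the limiting eigenvalue $\lambda_{1,\delta}$, and then pass $\delta\to 0$ using the convergence $\lambda_{1,\delta}\to\lambda_1$ already proved in Step 3 of Theorem \ref{thm32} and the uniform embedding constant in \eqref{formu44}. A second subtlety is verifying that $I_k$ and the weighted $L^{k+1}$ integral are both finite and continuous under the relevant approximations on $\Phi_0^k(\Omega)$ — this is where the hypothesis $s>-s_0$ with $s_0=\min(1,n/2k)$ is used twice over: $s>-n/2k$ makes $|x|^{2sk}dx$ a finite measure so the weighted norms are finite, and $s>-1$ is what powers the gradient and Hessian estimates of Section 3 feeding the compactness.
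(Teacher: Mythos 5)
Your overall architecture (prove the weighted embedding, then get \eqref{formu45} by combining a sharp lower bound for the Rayleigh quotient with equality at $\varphi_1$) is sound, and the equality step via $I_k(\varphi_1)=\lambda_1^k\int|x|^{2sk}|\varphi_1|^{k+1}$ is exactly what the paper does. But the two load-bearing steps both have genuine gaps. For the embedding at $p=k$, your chain needs $I_k(w)\leqs C\Vert u\Vert_{L^{k+1}(\Omega;|x|^{2sk})}^{k+1}$, and you obtain it from a claimed \emph{uniform} bound $\Vert w\Vert_{L^\infty}\leqs C$ for the solution of $S_k(D^2w)=|x|^{2sk}(-u)^k$. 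The $L^\infty$-estimate of \cite{CW01variational} only gives $\Vert w\Vert_{L^\infty}\leqs C\,\Vert\,|x|^{2sk}|u|^k\Vert_{L^q}^{1/k}$ for some $q>n/2k$, and this quantity is not controlled by $\Vert u\Vert_{L^{k+1}(\Omega;|x|^{2sk})}$: a H\"older reduction would require $kq\leqs k+1$, which is incompatible with $q>n/2k$ as soon as $n\geqs 2(k+1)$. So for $k\leqs n/2$ (the subcritical range, which is the interesting one) the exponents do not close, and the only way to bound $I_k(w)$ by the weighted $L^{k+1}$ norm of $u$ is to invoke the embedding for $w$ itself, which is circular. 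Note also that the paper never proves the critical case $p=k$ by a direct potential-theoretic argument: Step 1 only handles $p\in[0,k)$ by minimizing the truncated functionals $J_{M,\delta}$ (existence of minimizers via the parabolic flow of Lemma \ref{lem42} and identification via the uniqueness Lemma \ref{lem41}), and $p=k$ comes out of Step 2 with the sharp constant $\lambda_1^k$.

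For the sharp lower bound, your plan to run the flow of Lemma \ref{lem42} with $g=\log\big(|x|^{2sk}c^k(-u)^k\big)$ and to use monotonicity of the Rayleigh quotient does not fit the lemma's hypotheses: $g$ must be $C^2(\overline Q\times\mathbb{R})$, whereas $\log(-u)^k$ blows up as $u\to0$, i.e. precisely at $\partial\Omega$ and at the initial compatibility layer, so neither the a priori estimates nor the compatibility condition \eqref{formu42} are available; moreover the identification of a nontrivial limit of the normalized flow (ruling out convergence to $0$) is exactly the compactness issue that the critical power $p=k$ makes hard. The paper circumvents this by working with the subcritical, strictly positive nonlinearities $(|x|^2+\delta^2)^{sk}(1+|\lambda u|)^p$, $p<k$, $\lambda<\lambda_\delta$: the unique solutions $u_{\delta,p,\lambda}$ minimize $H_{\delta,p,\lambda}$, are uniformly bounded as $p\to k$ thanks to the subsolution coming from the very definition \eqref{formu33} of $\lambda_\delta$, and the resulting boundedness from below of $H_{\delta,k,\lambda}$ yields, after scaling, $\int(-u)S_k(D^2u)\geqs\lambda^k\int(|x|^2+\delta^2)^{sk}|u|^{k+1}$ for all $\lambda<\lambda_1$ and small $\delta$, whence \eqref{formu413}. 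If you want to salvage your outline, the subcritical approximation (rather than the critical-power flow) is the step you need to adopt; your H\"older reduction from $p=k$ to $p<k$ and the equality via $\varphi_1$ can then be kept as is.
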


\begin{proof}
We divide the proof into two steps.
\\[0.5em]
\textbf{Step 1.} We prove \eqref{formu44} for $p\in[0,k)$.

For given $p\in[0,k)$, denote $f_M(z)$ a smooth positive function satisfying
\begin{align*}
f_M(z)=
\left\{
\begin{array}{ll}
(1+|z|)^p     &|z|\leqs M,  \\
|z|^{-2}     & |z|\geqs 2M,
\end{array}
\right.
\end{align*}
and $|z|^{-2}\leqs f_M(z)\leqs 2(1+|z|)^p$ for $|z|\in(M,2M)$, where $M>1$ is a constant. Consider the functional
\begin{align}
J_{M,\delta}(u)=\int_\Omega\Big[\frac{(-u)S_k(D^2u)}{k+1}-(|x|^2+\delta^2)^{sk}F_M(u)\Big] dx,
\label{formu46}
\end{align}
where $F_M(u)=\int_0^{|u|}f_M(z)dz$. For every fixed $M>1$, it follows that $F_M(u)$ is bounded, thanks to our choice of $f_M$. Thus, we obtain $J_{M,\delta}(u)$ is bounded from below. Set
\begin{align*}
d_{M,\delta}=\inf\{J_{M,\delta}(u):u\in\Phi_0^k(\Omega)\}.
\end{align*}
To prove \eqref{formu44}, it then suffices to obtain a uniform lower bound of $d_{M,\delta}$ independent of $\delta>0$ and $M>1$, due to an argument by contradiction.

We first claim that $d_{M,\delta}$ is attained at a function $v_{M,\delta}\in\Phi_0^k(\Omega)$. For any $\varepsilon>0$, select a $\phi_{\varepsilon}^*\in\Phi_0^k(\Omega)\cap C^4(\overline\Omega)$ such that $J_{M,\delta}(\phi_{\varepsilon}^*)\leqs d_{M,\delta}+\varepsilon/2$. Let $\phi_{\varepsilon}$ be the solution of
\begin{align*}
S_k(D^2\phi)=(1-\eta)(|x|^2+\delta^2)^{sk}+\eta S_k(D^2\phi_{\varepsilon}^*)\quad\text{in }\Omega,\quad \phi=0\quad\text{on }\partial\Omega,
\end{align*}
where $\eta\in C_0^\infty(\Omega)$ is a cut-off function satisfying $0\leqs\eta\leqs1$ in $\Omega$ and
\begin{align*}
\eta=1\quad\text{in }\Omega_{\varrho}:=\{x\in\Omega:dist(x, \partial\Omega)>\varrho\}.
\end{align*}
Then by the maximum principle, it follows that
\begin{align*}
\sup_{\Omega_{\varrho}}|\phi_{\varepsilon}(x)-\phi_{\varepsilon}^*(x)| \leqs\sup_{\partial\Omega_{\varrho}}|\phi_{\varepsilon}(x)-\phi_{\varepsilon}^*(x)|\leq\sup_{\partial\Omega_{\varrho}} (|\phi_{\varepsilon}(x)| +|\phi_{\varepsilon}^*(x)|)\to0\quad\text{as }\varrho\to0.
\end{align*}
Taking $\varrho>0$ sufficiently small, we obtain
\begin{align}
J_{M,\delta}(\phi_{\varepsilon})\leqs d_{M,\delta}+\varepsilon\quad\text{and}\quad S_k(D^2\phi_{\varepsilon})=(|x|^2+\delta^2)^{sk}\quad\text{on }\partial\Omega.
\label{formu47}
\end{align}

Consider the following parabolic Hessian problem
\begin{equation}
\left\{
\begin{array}{ll}
\log S_k(D^2u)-u_t=\log\psi_{M,\delta}(x,u)\quad\text{in }Q=\Omega\times(0,\infty),\\
u(\cdot,t)=\phi_{\varepsilon}\quad\text{on }\{t=0\},\qquad u=0\quad\text{on }\partial\Omega\times(0,\infty),
\end{array}
\right.
\label{formu48}
\end{equation}
where $\psi_{M,\delta}(x,u)=(|x|^2+\delta^2)^{sk}f_M(u)$. Since \eqref{formu47}, $\phi_{\varepsilon}$ satisfies the compatibility condition \eqref{formu42}. By the definition of $f_M$, we have $\log \psi_{M,\delta}(x,u)\leqs C_{M,\delta}<+\infty$. Then an application of Lemma \ref{lem42} shows that \eqref{formu48} admits a smooth solution $w_{\varepsilon}(x,t)\in C^{3+\alpha,1+\alpha/2}(\overline Q)$ so that
\begin{align}
\Vert w_{\varepsilon}(x,t)\Vert_{C^{3+\alpha,1+\alpha/2}(\overline Q)}\leqs C
\label{formu49}
\end{align}
with some constant $C$ not depending on $t$. We actually obtain a descent gradient flow for the functional $J_{M,\delta}$. Indeed, thanks to the variational structure of $S_k$ (see \cite{Wangnote}),
\begin{align*}
\frac{d}{dt}J_{M,\delta}(w_{\varepsilon}(\cdot,t))&=-\int_\Omega\Big[S_k(D^2w_{\varepsilon})-\psi_{M,\delta}(x,w_{\varepsilon})\Big]\frac{\partial}{\partial t}w_{\varepsilon}(x,t)dx\\
&=-\int_\Omega\Big[S_k(D^2w_{\varepsilon})-\psi_{M,\delta}(x,w_{\varepsilon})\Big]\log\frac{S_k(D^2w_{\varepsilon})}{\psi_{M,\delta}(x,w_{\varepsilon})}dx\leqs0.
\end{align*}
Hence by \eqref{formu47}, we have $d_{M,\delta}\leqs J_{M,\delta}(w_{\varepsilon}(\cdot,t))\leqs d_{M,\delta}+\varepsilon$ and there exists a sequence $t_j\to+\infty$ such that $(d/dt)J_{M,\delta}(w_{\varepsilon}(\cdot,t_j))\to0$. Thus from \eqref{formu49}, by Arzel\`a-Ascoli Theorem we can extract a subsequence of $w_{\varepsilon}(\cdot,t_j)$ so that it converges to a function $\tilde v_{\varepsilon}(x)\in C^3(\overline\Omega)$, which is the solution of
\begin{align}
S_k(D^2u)=\psi_{M,\delta}(x,u)\quad\text{in }\Omega,\quad u=0\quad\text{on }\partial\Omega.
\label{formu410}
\end{align}
Moreover, it satisfies $d_{M,\delta}\leqs J_{M,\delta}(\tilde v_{\varepsilon})\leqs d_{M,\delta}+\varepsilon$. For $\delta>0$ and $M>1$ fixed, the right-hand-side of the equation \eqref{formu410} is bounded from above. Thus by the comparison principle, there has a positive constant $\tilde C_{M,\delta}$ such that $0\geqs\tilde v_{\varepsilon}\geqs-\tilde C_{M,\delta}$ holds for every $\varepsilon>0$. Hence $\psi_{M,\delta}(x,\tilde v_{\varepsilon})\geqs c_{M,\delta}>0$ uniformly for $\varepsilon>0$. Applying Theorem \ref{thm31}(i) to $\tilde v_{\varepsilon}$, we obtain there exists a constant $C$ independent of $\varepsilon$ such that $\Vert\tilde v_{\varepsilon}\Vert_{C^{3+\alpha}(\overline\Omega)}\leqs C$ holds uniformly. Then up to a subsequence, $\tilde v_{\varepsilon}$ converges in $C^3(\overline{\Omega})$ to a function $v_{M,\delta}$, which is a solution of \eqref{formu410} with $J_{M,\delta}(v_{M,\delta})=d_{M,\delta}$.

We next show that $v_{M,\delta}$ is uniformly bounded for $\delta>0$ and $M>1$. Suppose on the contrary that $R_{M,\delta}=\sup_\Omega|v_{M,\delta}|$ tends to $+\infty$ as $\delta\to0$ or $M\to+\infty$. Set $u_{M,\delta}=v_{M,\delta}/R_{M,\delta}$. Then for every $\epsilon>0$, $u_{M,\delta}$ solves
\begin{align*}
S_k(D^2u_{M,\delta})=R_{M,\delta}^{-k}(|x|^2+\delta^2)^{sk} f_M(R_{M,\delta}u_{M,\delta})\leqs\epsilon(|x|^2+\delta^2)^{sk},
\end{align*}
when $R_{M,\delta}$ is given large enough. Since $s>-s_0$, by $L^\infty$-estimate (see \cite[Theorem 2.1]{CW01variational}) we have $\Vert u_{M,\delta}\Vert_{L^\infty(\Omega)}\to0$ as $\epsilon\to0$ uniformly for $\delta>0$, which leads to a contradiction to $\inf_\Omega u_{M,\delta}=-1$. Hence, we obtain $|v_{M,\delta}|\leqs M_0$ uniformly for $\delta>0$ and $M>1$.

Taking $M>M_0$, by the uniqueness result of Lemma \ref{lem41}, we actually derive that $v_{M,\delta}$ is identical to the unique solution $u_{\delta}$ of the problem
\begin{align*}
S_k(D^2u)=(|x|^2+\delta^2)^{sk}(1+|u|)^p\quad\text{in }\Omega,\quad u=0\quad\text{on }\partial\Omega.
\end{align*}
Denote
\begin{gather*}
H_{\delta}(u)=\int_\Omega\Big[\frac{(-u)S_k(D^2u)}{k+1}-\frac{1}{p+1}(|x|^2+\delta^2)^{sk}(1+|u|)^{p+1}\Big] dx,\\
\tilde d_{\delta}=\inf\{H_{\delta}(u): u\in\Phi_0^k(\Omega)\}.
\end{gather*}
Clearly, $d_{M,\delta}\to \tilde d_{\delta}$ as $M\to+\infty$. Therefore, we have
\begin{align*}
-\infty<H_{\delta}(u_\delta)=\lim_{M\to+\infty}J_{M,\delta}(v_{M,\delta})=\lim_{M\to+\infty}d_{M,\delta}=\tilde d_{\delta}.
\end{align*}
Since $|u_{\delta}|\leqs M_0$ uniformly for $\delta>0$, we infer that $\tilde d_{\delta}$ is uniformly bounded from below. Thus, we obtain \eqref{formu44} holds for $p\in[0,k)$.
\\[0.5em]
\textbf{Step 2.} We prove \eqref{formu45}.

For any fixed $\lambda\in(0,\lambda_1)$, we first choose $\delta_0=\delta_0(\lambda)>0$ small enough such that $\lambda<\lambda_\delta$ for $0<\delta<\delta_0$. By a similar argument of Step 1, we have for $p\in[0,k)$, the unique solution $u=u_{\delta,p,\lambda}$ of the problem
\begin{align}
S_k(D^2u)=(|x|^2+\delta^2)^{sk}(1+|\lambda u|)^p\quad\text{in }\Omega,\quad u=0\quad\text{on }\partial\Omega,
\label{formu411}
\end{align}
satisfies
\begin{align}
H_{\delta,p,\lambda}(u_{\delta,p,\lambda})=\tilde d_{\delta,p,\lambda}=\inf\{H_{\delta,p,\lambda}(u): u\in\Phi_0^k(\Omega)\},
\label{formu412}
\end{align}
where the functional $H_{\delta,p,\lambda}$ is given by
\begin{align*}
H_{\delta,p,\lambda}(u)=\int_\Omega\Big[\frac{(-u)S_k(D^2u)}{k+1}-\frac{1}{\lambda(p+1)}(|x|^2+\delta^2)^{sk}(1+|\lambda u|)^{p+1}\Big] dx.
\end{align*}
Since $p<k$ and $\lambda<\lambda_\delta$, by the definition of $\lambda_\delta$ (see \eqref{formu33}), there exists a solution $\varphi_{\delta,\lambda}$ of \eqref{formu34} which is a subsolution of \eqref{formu411}. From Theorem \ref{thm31}(ii) and by the uniqueness result of Lemma \ref{lem41}, it follows that $0>u_{\delta,p,\lambda}\geqs \varphi_{\delta,\lambda}$. Hence by Theorem \ref{thm31}(i), $u_{\delta,p,\lambda}$ is uniformly bounded in $C^{3+\alpha}(\overline\Omega)$ for $p\in[1,k)$. By extracting a subsequence, we obtain $u_{\delta,p,\lambda}$ converges in $C^3(\overline\Omega)$ to a function $u_{\delta,k,\lambda}$ as $p\to k$. Then it is easy to check that $\tilde d_{\delta,p,\lambda}\to\tilde d_{\delta,k,\lambda}$ as $p\to k$. Thus from \eqref{formu412}, we have
\begin{align*}
\inf\{H_{\delta,k,\lambda}(u):u\in\Phi_0^k(\Omega)\}= H_{\delta,k,\lambda}(u_{\delta,k,\lambda})>-\infty.
\end{align*}
This illustrates that for every $0<\delta<\delta_0$, it follows
\begin{align*}
\inf_{u\in\Phi_0^k(\Omega)}\left\{\dfrac{\displaystyle\int_\Omega(-u)S_k(D^2u)dx}{\displaystyle\int_\Omega(|x|^2+\delta^2)^{sk}|u|^{k+1}dx} \right\}\geqs\lambda^k.
\end{align*}
By the arbitrariness of $\lambda<\lambda_1$ and $0<\delta<\delta_0(\lambda)$, we obtain
\begin{align}
\int_\Omega(-u)S_k(D^2u)dx\geqs\lambda_1^k\int_\Omega|x|^{2sk} |u|^{k+1}dx
\label{formu413}
\end{align}
holds for all $u\in\Phi_0^k(\Omega)$. On the other hand, the first eigenfunction $\varphi_1$ of \eqref{formu31} verifies
\begin{align*}
\int_\Omega(-\varphi_1)S_k(D^2\varphi_1)dx= \lambda_1^k\int_\Omega|x|^{2sk} |\varphi_1|^{k+1}dx.
\end{align*}
Combining this with \eqref{formu413}, we finally conclude \eqref{formu45}. This finishes the proof.
\end{proof}

\begin{remark}\label{rmk61}
In the recent work \cite{HK25variational}, the weighted embedding \eqref{formu44} is proven for general $p\in[0,k^*-1]$, where $k^*=k^*(s)$ is the critical exponent given by
\begin{align*}
k^*\left\{
\begin{array}{ll}
 =\frac{(k+1)(n+2sk)}{n-2k} &\text{if }2k<n\text{ and }s\leqs0,\\
 =\frac{(k+1)n}{n-2k} & \text{if }2k<n\text{ and }s>0, \\
 <\infty    & \text{if }2k=n, \\
 =\infty    & \text{if }2k>n.
\end{array}
\right.
\end{align*}
The proof relies on a reduction to radially symmetric functions by means of a descent gradient flow, as in \cite{Wang94class}.
\end{remark}

\small
\bibliographystyle{plain}
\bibliography{cite}

@article {CW01variational,
    AUTHOR = {Chou, K.-S. and Wang, X.-J.},
     TITLE = {A variational theory of the {H}essian equation},
   JOURNAL = {Comm. Pure Appl. Math.},
  FJOURNAL = {Communications on Pure and Applied Mathematics},
    VOLUME = {54},
      YEAR = {2001},
    NUMBER = {9},
     PAGES = {1029--1064},
}

@incollection {Wangnote,
    AUTHOR = {Wang, X.-J.},
     TITLE = {The {$k$}-{H}essian equation},
 BOOKTITLE = {Geometric analysis and {PDE}s},
    SERIES = {Lecture Notes in Math.},
    VOLUME = {1977},
     PAGES = {177--252},
 PUBLISHER = {Springer, Dordrecht},
      YEAR = {2009},
}

@article {CNS85Dirichlet,
    AUTHOR = {Caffarelli, L. and Nirenberg, L. and Spruck, J.},
     TITLE = {The {D}irichlet problem for nonlinear second-order elliptic equations. {III}. {F}unctions of the eigenvalues of the {H}essian},
   JOURNAL = {Acta Math.},
  FJOURNAL = {Acta Mathematica},
    VOLUME = {155},
      YEAR = {1985},
    NUMBER = {3-4},
     PAGES = {261--301},
}

@article {Ivochkina85Dirichlet,
    AUTHOR = {Ivochkina, N. M.},
     TITLE = {Solution of the {D}irichlet problem for certain equations of {M}onge-{A}mp\`ere type},
   JOURNAL = {Mat. Sb. (N.S.)},
  FJOURNAL = {Matematicheski\u i\ Sbornik. Novaya Seriya},
    VOLUME = {128(170)},
      YEAR = {1985},
    NUMBER = {3},
     PAGES = {403--415, 447},
}

@article {Trudinger95Dirichlet,
    AUTHOR = {Trudinger, N. S.},
     TITLE = {On the {D}irichlet problem for {H}essian equations},
   JOURNAL = {Acta Math.},
  FJOURNAL = {Acta Mathematica},
    VOLUME = {175},
      YEAR = {1995},
    NUMBER = {2},
     PAGES = {151--164},
}

@article {Guan94subsolution,
    AUTHOR = {Guan, B.},
     TITLE = {The {D}irichlet problem for a class of fully nonlinear elliptic equations},
   JOURNAL = {Comm. Partial Differential Equations},
  FJOURNAL = {Communications in Partial Differential Equations},
    VOLUME = {19},
      YEAR = {1994},
    NUMBER = {3-4},
     PAGES = {399--416},
}

@article {ITW04degenerate,
    AUTHOR = {Ivochkina, N. M. and Trudinger, N. S. and Wang, X.-J.},
     TITLE = {The {D}irichlet problem for degenerate {H}essian equations},
   JOURNAL = {Comm. Partial Differential Equations},
  FJOURNAL = {Communications in Partial Differential Equations},
    VOLUME = {29},
      YEAR = {2004},
    NUMBER = {1-2},
     PAGES = {219--235},
}

@article{TW97measure,
    AUTHOR = {Trudinger, N. S. and Wang, X.-J.},
     TITLE = {Hessian measures. {I}},
   JOURNAL = {Topol. Methods Nonlinear Anal.},
  FJOURNAL = {Topological Methods in Nonlinear Analysis},
    VOLUME = {10},
      YEAR = {1997},
    NUMBER = {2},
     PAGES = {225--239},
}

@article {TW99measure,
    AUTHOR = {Trudinger, N. S. and Wang, X.-J.},
     TITLE = {Hessian measures. {II}},
   JOURNAL = {Ann. of Math. (2)},
  FJOURNAL = {Annals of Mathematics. Second Series},
    VOLUME = {150},
      YEAR = {1999},
    NUMBER = {2},
     PAGES = {579--604},
}

@article {Ld02potential,
    AUTHOR = {Labutin, D. A.},
     TITLE = {Potential estimates for a class of fully nonlinear elliptic equations},
   JOURNAL = {Duke Math. J.},
  FJOURNAL = {Duke Mathematical Journal},
    VOLUME = {111},
      YEAR = {2002},
    NUMBER = {1},
     PAGES = {1--49},
}

@article {Wang94class,
    AUTHOR = {Wang, X.-J.},
     TITLE = {A class of fully nonlinear elliptic equations and related functionals},
   JOURNAL = {Indiana Univ. Math. J.},
  FJOURNAL = {Indiana University Mathematics Journal},
    VOLUME = {43},
      YEAR = {1994},
    NUMBER = {1},
     PAGES = {25--54},
}

@article {Aubin81subsuper,
    AUTHOR = {Aubin, T.},
     TITLE = {\'{E}quations de {M}onge-{A}mp\`ere r\'eelles},
   JOURNAL = {J. Functional Analysis},
  FJOURNAL = {Journal of Functional Analysis},
    VOLUME = {41},
      YEAR = {1981},
    NUMBER = {3},
     PAGES = {354--377},
}

@article {TW98poincare,
    AUTHOR = {Trudinger, N. S. and Wang, X.-J.},
     TITLE = {A {P}oincar\'e{} type inequality for {H}essian integrals},
   JOURNAL = {Calc. Var. Partial Differential Equations},
  FJOURNAL = {Calculus of Variations and Partial Differential Equations},
    VOLUME = {6},
      YEAR = {1998},
    NUMBER = {4},
     PAGES = {315--328},
}

@article {Tso90functional,
    AUTHOR = {Tso, K.},
     TITLE = {On a real {M}onge-{A}mp\`ere functional},
   JOURNAL = {Invent. Math.},
  FJOURNAL = {Inventiones Mathematicae},
    VOLUME = {101},
      YEAR = {1990},
    NUMBER = {2},
     PAGES = {425--448},
}

@article {KT07maximum,
    AUTHOR = {Kuo, H.-J. and Trudinger, N. S.},
     TITLE = {New maximum principles for linear elliptic equations},
   JOURNAL = {Indiana Univ. Math. J.},
  FJOURNAL = {Indiana University Mathematics Journal},
    VOLUME = {56},
      YEAR = {2007},
    NUMBER = {5},
     PAGES = {2439--2452},
}

@article {Le22spectral,
    AUTHOR = {Le, N. Q.},
     TITLE = {A spectral characterization and an approximation scheme for the {H}essian eigenvalue},
   JOURNAL = {Rev. Mat. Iberoam.},
  FJOURNAL = {Revista Matem\'atica Iberoamericana},
    VOLUME = {38},
      YEAR = {2022},
    NUMBER = {5},
     PAGES = {1473--1500},
}

@article {Lions85remarks,
    AUTHOR = {Lions, P.-L.},
     TITLE = {Two remarks on {M}onge-{A}mp\`ere equations},
   JOURNAL = {Ann. Mat. Pura Appl. (4)},
  FJOURNAL = {Annali di Matematica Pura ed Applicata. Serie Quarta},
    VOLUME = {142},
      YEAR = {1985},
     PAGES = {263--275},
}

@book {GT83elliptic,
    AUTHOR = {Gilbarg, D. and Trudinger, N. S.},
     TITLE = {Elliptic partial differential equations of second order},
    VOLUME = {224},
   EDITION = {Second},
 PUBLISHER = {Springer-Verlag, Berlin},
      YEAR = {1983},
}

@article {HHW11degenerate,
    AUTHOR = {Hong, J. and Huang, G. and Wang, W.},
     TITLE = {Existence of global smooth solutions to {D}irichlet problem for degenerate elliptic {M}onge-{A}mpere equations},
   JOURNAL = {Comm. Partial Differential Equations},
  FJOURNAL = {Communications in Partial Differential Equations},
    VOLUME = {36},
      YEAR = {2011},
    NUMBER = {4},
     PAGES = {635--656},
}

@article {LS17degenerate,
    AUTHOR = {Le, N. Q. and Savin, O.},
     TITLE = {Schauder estimates for degenerate {M}onge-{A}mp\`ere equations and smoothness of the eigenfunctions},
   JOURNAL = {Invent. Math.},
  FJOURNAL = {Inventiones Mathematicae},
    VOLUME = {207},
      YEAR = {2017},
    NUMBER = {1},
     PAGES = {389--423},
}

@article {WZ21complex,
    AUTHOR = {Wang, J. and Zhou, B.},
     TITLE = {Regularity for a class of singular complex {H}essian equations},
   JOURNAL = {Acta Math. Sin. (Engl. Ser.)},
  FJOURNAL = {Acta Mathematica Sinica (English Series)},
    VOLUME = {37},
      YEAR = {2021},
    NUMBER = {11},
     PAGES = {1709--1720},
}

@article {Urbas90viscosity,
    AUTHOR = {Urbas, J.},
     TITLE = {On the existence of nonclassical solutions for two classes of fully nonlinear elliptic equations},
   JOURNAL = {Indiana Univ. Math. J.},
  FJOURNAL = {Indiana University Mathematics Journal},
    VOLUME = {39},
      YEAR = {1990},
    NUMBER = {2},
     PAGES = {355--382},
}

@book {Han16,
    AUTHOR = {Han, Q.},
     TITLE = {Nonlinear elliptic equations of the second order},
    SERIES = {Graduate Studies in Mathematics},
    VOLUME = {171},
 PUBLISHER = {American Mathematical Society, Providence, RI},
      YEAR = {2016},
     PAGES = {viii+368},
}

@article {LT94inequality,
    AUTHOR = {Lin, M. and Trudinger, N. S.},
     TITLE = {On some inequalities for elementary symmetric functions},
   JOURNAL = {Bull. Austral. Math. Soc.},
  FJOURNAL = {Bulletin of the Australian Mathematical Society},
    VOLUME = {50},
      YEAR = {1994},
    NUMBER = {2},
     PAGES = {317--326},
}

@book {Lieberman96inequality,
    AUTHOR = {Lieberman, G.},
     TITLE = {Second order parabolic differential equations},
 PUBLISHER = {World Scientific Publishing Co., Inc., River Edge, NJ},
      YEAR = {1996},
     PAGES = {xii+439},
}

@article {Garding59inequality,
    AUTHOR = {G{\aa}rding, L.},
     TITLE = {An inequality for hyperbolic polynomials},
   JOURNAL = {J. Math. Mech.},
  FJOURNAL = {J. Math. Mech.},
    VOLUME = {8},
      YEAR = {1959},
     PAGES = {957--965},
}

@article {Huang19uniqueness,
    AUTHOR = {Huang, G.},
     TITLE = {Uniqueness of least energy solutions for {M}onge-{A}mp\`ere functional},
   JOURNAL = {Calc. Var. Partial Differential Equations},
  FJOURNAL = {Calculus of Variations and Partial Differential Equations},
    VOLUME = {58},
      YEAR = {2019},
    NUMBER = {2},
     PAGES = {Paper No. 73, 20},
}

@article {HK25variational,
    AUTHOR = {He, R. and Ke, W.},
     TITLE = {On the {H}essian {H}ardy-{S}obolev inequality and related variational problems},
   JOURNAL = {arXiv:2505.03245},
      YEAR = {2025},
}

@article {Li90existence,
    AUTHOR = {Li, Y. Y.},
     TITLE = {Some existence results for fully nonlinear elliptic equations of {M}onge-{A}mp\`ere type},
   JOURNAL = {Comm. Pure Appl. Math.},
  FJOURNAL = {Communications on Pure and Applied Mathematics},
    VOLUME = {43},
      YEAR = {1990},
    NUMBER = {2},
     PAGES = {233--271},
}

@article {Krylov89payoff,
    AUTHOR = {Krylov, N. V.},
     TITLE = {Smoothness of the payoff function for a controllable diffusion process in a domain},
   JOURNAL = {Izv. Akad. Nauk SSSR Ser. Mat.},
  FJOURNAL = {Izvestiya Akademii Nauk SSSR. Seriya Matematicheskaya},
    VOLUME = {53},
      YEAR = {1989},
    NUMBER = {1},
     PAGES = {66--96},
}

@article {Krylov95nonlinear,
    AUTHOR = {Krylov, N. V.},
     TITLE = {On the general notion of fully nonlinear second-order elliptic equations},
   JOURNAL = {Trans. Amer. Math. Soc.},
  FJOURNAL = {Transactions of the American Mathematical Society},
    VOLUME = {347},
      YEAR = {1995},
    NUMBER = {3},
     PAGES = {857--895},
}

@article {JH24degenerate,
    AUTHOR = {Jiao, H. and Wang, Z.},
     TITLE = {Second order estimates for convex solutions of degenerate {$k$}-{H}essian equations},
   JOURNAL = {J. Funct. Anal.},
  FJOURNAL = {Journal of Functional Analysis},
    VOLUME = {286},
      YEAR = {2024},
    NUMBER = {3},
     PAGES = {Paper No. 110248, 30},
}

\medskip\medskip
{\em Address and E-mail:}

\medskip\medskip
{\em Rongxun He}

{\em School of Mathematical Sciences, Fudan University}

{\em rxhe24@m.fudan.edu.cn}

\medskip\medskip
{\em Genggeng Huang}

{\em School of Mathematical Sciences, Fudan University}

{\em genggenghuang@fudan.edu.cn}

\end{document}